\newtheorem{lemma}{Lemma}[section]
\newtheorem{proposition}{Proposition}[section]
\newtheorem{thm}{Theorem}[section]
\newtheorem{corollary}{Corollary}[section]
\newtheorem{remark}{Remark}[section]
\title[Sparse CCA]{Sparse CCA via Precision Adjusted Iterative Thresholding}
\author[Chen et al.]{Mengjie Chen}
\address{Computational Biology and Bioinformatics, Yale University, USA}
\email{mengjie.chen@yale.edu}
\author[Chen et al.]{Chao Gao}
\address{Department of Statistics, Yale University, USA}
\email{chao.gao@yale.edu}
\author[Chen et al.]{Zhao Ren}
\address{Department of Statistics, Yale University, USA}
\email{zhao.ren@yale.edu}
\author[Chen et al.]{Harrison H. Zhou}
\address{Department of Statistics, Yale University, USA}
\email{huibin.zhou@yale.edu}
\begin{document}

\begin{abstract}
Sparse Canonical Correlation Analysis (CCA)\ has received considerable attention in high-dimensional data analysis to study the relationship
between two sets of random variables. However, there has been remarkably
little theoretical statistical foundation on sparse CCA in high-dimensional
settings despite active methodological and applied research
activities. In this paper, we introduce an elementary sufficient and necessary characterization such that  the solution of CCA is indeed sparse, 
propose a computationally efficient procedure, called CAPIT, to estimate the canonical
directions, and show that the procedure is rate-optimal under various
assumptions on nuisance parameters. The procedure is applied to a breast
cancer dataset from The Cancer Genome Atlas project. We identify methylation
probes that are associated with genes, which have been previously characterized
as prognosis signatures of the metastasis of breast cancer.
\end{abstract}

\keywords{Canonical Correlation Analysis, Iterative Thresholding, Minimax Lower Bound, Optimal Convergence Rate, Single Canonical Pair Model, Sparsity}

\section{Introduction}

Last decades witness the delivery of an incredible amount of information
through the development of high-throughput technologies. Researchers now
routinely collect a catalog of different measurements from the same group of
samples. It is of great importance to elucidate the phenomenon in the
complex system by inspecting the relationship between two or even more sets
of measurements. Canonical correlation analysis is a popular tool to study
the relationship between two sets of variables. It has been successfully
applied to a wide range of disciplines, including psychology and
agriculture, and more recently, information retrieving \citep
{hardoon2004canonical}, brain-computer interface \citep{bin2009online},
neuroimaging \citep{avants2010dementia}, genomics \citep{witten2009extensions}
and organizational research \citep{bagozzi2011measurement}.

In this paper, we study canonical correlation analysis (CCA) in the
high-dimensional setting. The CCA in the classical setting, a celebrated
technique proposed by \cite{hotelling36}, is to find the linear combinations
of two sets of random variables with maximal correlation. Given two centered
random vectors $X\in \mathbb{R}^{p_{1}}$ and $Y\in \mathbb{R}^{p_{2}}$ with
joint covariance matrix 
\begin{equation}
\Sigma =%
\begin{pmatrix}
\Sigma _{1} & \Sigma _{12} \\ 
\Sigma _{21} & \Sigma _{2}%
\end{pmatrix}%
,  \label{eq:covariance}
\end{equation}%
the population version of CCA solves 
\begin{equation}
(\theta ,\eta )=\arg \max_{(a,b)}\left\{ a^{T}\Sigma _{12}b:a^{T}\Sigma
_{1}a=1,b^{T}\Sigma _{2}b=1\right\} .  \label{eq:CCAopt}
\end{equation}%
The optimization problem (\ref{eq:CCAopt}) can be solved by applying
singular value decomposition (SVD) on the matrix $\Sigma _{1}^{-1/2}\Sigma
_{12}\Sigma _{2}^{-1/2}$. In practice, \cite{hotelling36} proposed to
replace $\Sigma _{1}^{-1/2}\Sigma _{12}\Sigma _{2}^{-1/2}$ by the sample
version $\hat{\Sigma}_{1}^{-1/2}\hat{\Sigma}_{12}\hat{\Sigma}_{2}^{-1/2}$.
This leads to consistent estimation of the canonical directions $(\theta
,\eta )$ when the dimensions $p_{1}$ and $p_{2}$ are fixed and sample size $%
n $ increases. However, in the high-dimensional setting, when the dimensions 
$p_{1}$ and $p_{2}$ are large compared with sample size $n$, this SVD
approach may not work. In fact, when the dimensions exceed the sample size,
SVD cannot be applied because the inverse of the sample covariance does not
exist. 

The difficulty motivates people to impose structural assumptions on
the canonical directions in the CCA problem. For example, sparsity has been
assumed on the canonical directions 
\citep{wiesel08, witten09, parkhomenko09, hardoon2011sparse,
le2009sparse, waaijenborg2009sparse, avants2010dementia}. The sparsity
assumption implies that most of the correlation between two random vectors
can be explained by only a small set of features or coordinates, which
effectively reduces the dimensionality and at the same time improves the
interpretability in many applications. However, to our best knowledge, there
is no full characterization of the probabilistic CCA model that the
canonical directions are indeed sparse. As a result, there has been
remarkably little theoretical study on sparse CCA in high-dimensional
settings despite recent active developments in methodology. This motivates
us to find a sufficient and necessary condition on the covariance structure (%
\ref{eq:covariance}) such that the solution of CCA is sparse. We show in Section \ref{sec:model} that 
$(\theta _{1},\eta _{1})$ is the solution of (\ref{eq:CCAopt}) if and only
if (\ref{eq:covariance}) satisfies 
\begin{equation}
\Sigma _{12}=\Sigma _{1}\left( \sum_{i=1}^{r}\lambda _{i}\theta _{i}\eta
_{i}^{T}\right) \Sigma _{2},  \label{eq:characterization}
\end{equation}%
where $\lambda _{i}$ decreases $\lambda _{1}>\lambda _{2}\geq \ldots \geq
\lambda _{r}>0$, $r=\text{rank}\left(\Sigma _{12}\right) $,  and $\left\{ (\theta _{i}, \eta
_{i}) \right\} $ are orthonormal w.r.t. metric $\Sigma _{1}$ and $\Sigma _{2}\ 
$respectively. i.e. $\theta _{i}^{T}\Sigma _{1}\theta _{j}=\mathbb{I}\{i=j\}$
and $\eta _{i}^{T}\Sigma _{2}\eta _{j}=\mathbb{I}\{i=j\}$. With this characterization, the canonical directions
are sparse if and only if $\theta _{1}$ and $\eta _{1}$ in (\ref%
{eq:characterization}) are sparse. Hence, sparsity assumption can be made
explicit in this probabilistic model.

Motivated by the characterization (\ref{eq:characterization}), we propose a
method called CAPIT, standing for Canonical correlation Analysis via
Precision adjusted Iterative Thresholding, to estimate the sparse canonical
directions. Our basic idea is simple. First, we obtain a
good estimator of the precision matrices $(\Sigma _{1}^{-1},\Sigma
_{2}^{-1}) $. Then, we transform the data by the estimated precision
matrices to adjust the influence of the nuisance covariance $(\Sigma
_{1},\Sigma _{2})$. Finally, we apply iterative thresholding on the
transformed data. The method is fast to implement in the sense that it
achieves the optimal statistical accuracy in only finite steps of iterations.

Rates of convergence for the proposed estimating procedure are obtained
under various sparsity assumptions on canonical directions and covariance
assumptions on $(\Sigma _{1},\Sigma _{2})$. In Section \ref{sec:lowerbound}
we establish the minimax lower bound for the sparse CCA problem. The rates
of convergence match the minimax lower bound as long as the estimation of
nuisance parameters $(\Sigma _{1},\Sigma _{2})$ is not dominating in
estimation of the canonical directions. To the best of our
knowledge, this is the first theoretically guaranteed method proposed in the
sparse CCA literature.

We point out that the sparse CCA methods proposed in the literature may
have both computational and statistical drawbacks. On the
computational side, regularized versions of (\ref{eq:CCAopt}) such as \cite%
{waaijenborg2009sparse} and \cite{wiesel08} are proposed in the literature
based on heuristics to avoid the non-convex nature of (\ref{eq:CCAopt}), but
there is no theoretical guarantee whether these algorithms would lead to consistent estimators. 
On the statistical side, methods proposed in the
literature do not explicitly take into account of the influence of the
nuisance parameters. For example, \cite{witten09} and \cite{parkhomenko09}
implicitly or explicitly use diagonal matrix or even identity matrix to
approximate the unknown precision matrices $(\Sigma _{1}^{-1},\Sigma
_{2}^{-1})$. Such approximation could be valid when the covariance matrices $%
(\Sigma _{1},\Sigma _{2})$ are nearly diagonal, otherwise there is no
theoretical guarantee of consistency of the procedures. We illustrate this
fact by a numerical example. We draw data from a multivariate Gaussian
distribution and then apply the proposed method and the Penalized Matrix Decomposition method 
by \cite{witten09}. We show the results in Figure \ref{scca_precision_plot}.
By taking into account of the structure of the nuisance
parameters, the CAPIT accurately recovers the sparse canonical directions,
while the PMD is not consistent. In this simulation study, we consider
sparse precision matrices and sparse canonical directions, where the sparse
assumption of precision matrices has a sparse graphical interpretation of $X$
and $Y$ when the distribution is Gaussian. See Section \ref{sec:sce2} for
more details.

\begin{figure}
\centering
\includegraphics[width=4in]{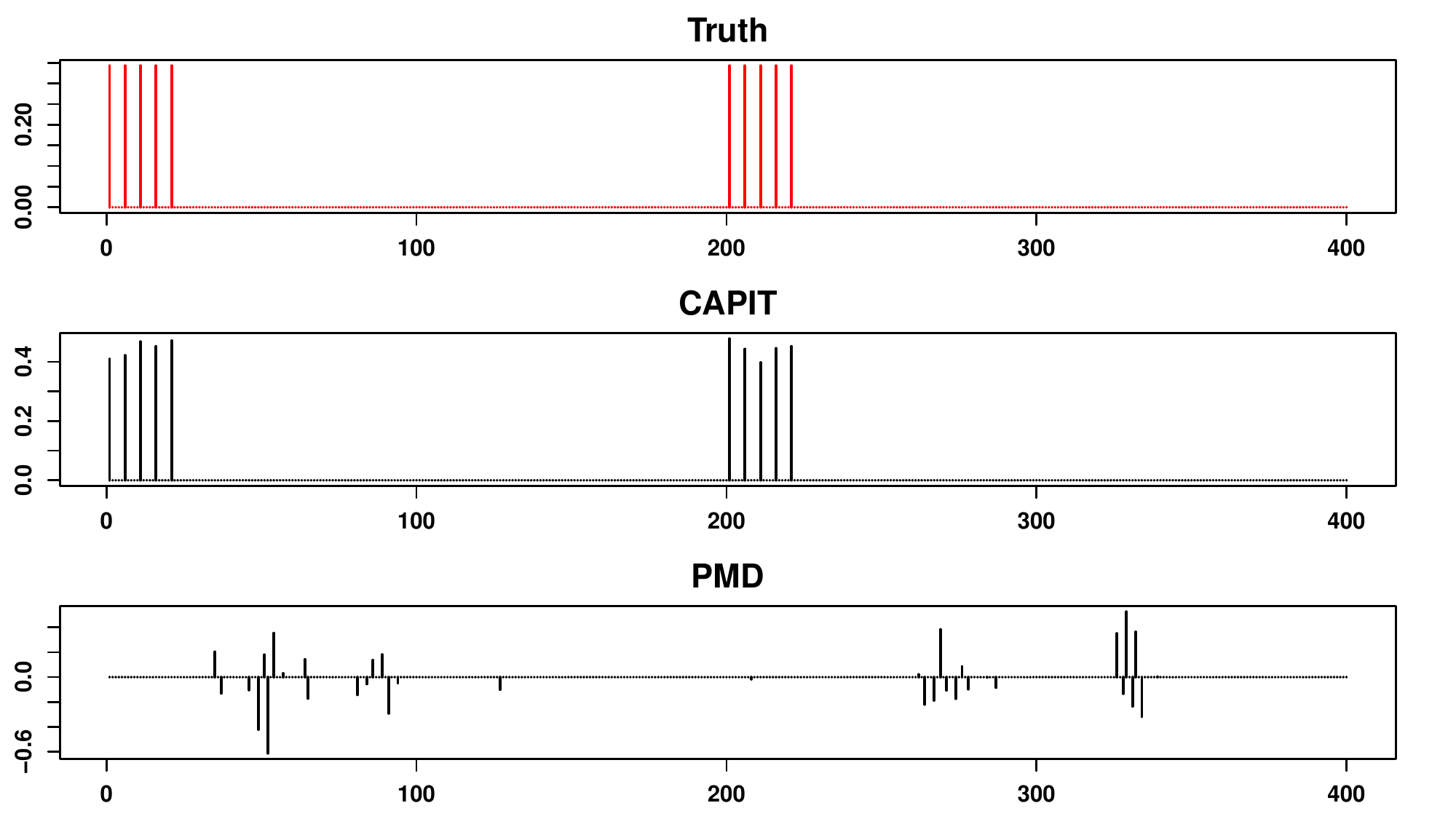}
\caption{Visualization of the simulation results of estimating $(\protect%
\theta ,\protect\eta )$ for a replicate from Scenario II in Section \protect
\ref{sec:sce2} when $p=200$ and $n=500$. The $\{1,2,...,200\}$-th
coordinates represent $\protect\theta $ and the $\{201,202,...,400\}$-th
coordinates represent $\protect\eta $. }
\label{scca_precision_plot}
\end{figure}

A closely related problem is the principal component analysis (PCA) \citep
{hotelling1933analysis}. In high-dimensional setting, sparse PCA is studied
in \cite{johnstone09}, \cite{ma2013sparse} and \cite{cai12b}. However, the
PCA and CCA problems are fundamentally different. With the characterization
of covariance structure in (\ref{eq:characterization}), such difference
becomes clear. We illustrate the simplest rank-one case. Assuming the
correlation rank $r$ in (\ref{eq:characterization}) is one, the covariance
structure is reduced to 
\begin{equation}
\Sigma _{12}=\lambda \Sigma _{1}\theta \eta ^{T}\Sigma _{2}.
\label{eq:SCPintro}
\end{equation}%
We refer to (\ref{eq:SCPintro}) as the Single Correlation Pair (SCP) model.
In the PCA literature, the corresponding rank-one model is called the
single-spike model. Its covariance structure can be written as 
\begin{equation}
\Sigma =\lambda \theta \theta ^{T}+I,  \label{eq:SPintro}
\end{equation}%
where $\theta $ is the principal direction of the random variable. A
comparison of (\ref{eq:SCPintro}) and (\ref{eq:SPintro}) reveals that
estimation of the CCA is more involved than that of the PCA because of the
presence of the nuisance parameters $(\Sigma _{1},\Sigma _{2})$, and the
difficulty of estimating covariance matrices and precision matrices is
known in high-dimensional statistics \citep{cai12a,ren13}. In the sparse PCA
setting, the absence of nuisance parameter in (\ref{eq:SPintro}) leads to
algorithms directly applied on the sample covariance matrix $\hat{\Sigma}$,
and the corresponding theoretical analysis is more tractable. In contrast,
in the sparse CCA setting, not only do we need to adapt to the underlying
sparsity of $(\theta ,\eta )$, but we also need to adapt to the unknown
covariance structure $(\Sigma _{1},\Sigma _{2})$. We are going to show in
Section \ref{sec:theory} how various structures of $(\Sigma _{1},\Sigma
_{2}) $ influence the convergence rate of the proposed method.

In addition, we demonstrate the CAPIT method by a real data example. We
apply the proposed method to the data arising in the field of cancer genomics where
methylation and gene expression are profiled for the same group of breast
cancer patients. The method explicitly takes into account the sparse
graphical model structure among genes. Interestingly, we identify
methylation probes that are associated with genes that are previously
characterized as prognosis signatures of the metastasis of breast cancer.
This example suggests the proposed method provides a reasonable framework
for exploratory and interpretive analysis of multiple datasets in
high-dimensional settings.

The contributions in the paper are two-fold. First, we characterize the
sparse CCA problem by proposing the probabilistic model and establish the
minimax lower bound under certain sparsity class. Second, we propose the
CAPIT method to adapt to both sparsity of the canonical direction and the
nuisance structure. The CAPIT procedure is computationally efficient and
attains optimal rate of convergence under various conditions. The paper is
organized as follows. We first provide a full characterization of the sparse
CCA model in Section \ref{sec:model}. The CAPIT method and its associated
algorithms are presented in Section \ref{sec:method}. Section \ref%
{sec:theory} is devoted to a theoretical analysis of our method. This
section also presents the minimax lower bound. Section \ref{sec:simu} and
Section \ref{sec:real} investigate the numerical performance of our
procedure by simulation studies and a real data example. The proof of the
main theorem, Theorem \ref{thm:main}, is gathered in Section \ref{sec:proof}%
. The proofs of all technical lemmas and Theorem \ref{thm:lower} are
gathered in Appendix.

\subsection{Notations}

For a matrix $A=(a_{ij})$, we use $||A||$ to denote its largest singular
value and call it the spectral norm of $A$. The Frobenius norm is defined as 
$||A||=\sqrt{\sum_{ij}a_{ij}^2}$. The matrix $l_1$ norm is defined as $%
||A||_{l_1}=\max_j\sum_i|a_{ij}|$. The norm $||\cdot||$, when applied to a
vector, is understood as the usual Euclidean $l_2$ norm. For any two real
numbers $a$ and $b$, we use notations $a\vee b=\max(a,b)$ and $a\wedge
b=\min(a,b)$. Other notations will be introduced along with the text.

\section{The Sparse CCA Model}

\label{sec:model}

Let $X\in \mathbb{R}^{p_{1}}$ and $Y\in \mathbb{R}^{p_{2}}$ be two centered
multivariate random vectors with dimension $p_{1}$ and $p_{2}$ respectively.
Write the covariance matrix of $(X^T, Y^T)^T$ as follows, 
\begin{equation*}
\mbox{\rm Cov}%
\begin{pmatrix}
X \\ 
Y%
\end{pmatrix}%
=%
\begin{pmatrix}
\Sigma _{1} & \Sigma _{12} \\ 
\Sigma _{21} & \Sigma _{2}%
\end{pmatrix}%
,
\end{equation*}%
where $\Sigma _{1}$ is the covariance matrix of $X$ with $\Sigma _{1}=%
\mathbb{E}XX^{T} $, $\Sigma _{2}$ is the covariance matrix of $Y$ with $%
\Sigma _{2}=\mathbb{E}YY^{T}$, and $\Sigma _{21}=\Sigma _{12}^{T}$ the
covariance structure between $X$ and $Y$ with $\Sigma _{12}=\Sigma _{21}^{T}=%
\mathbb{E}XY^{T}$. The canonical directions $\theta \in \mathbb{R}^{p_{1}}$
and $\eta \in \mathbb{R}^{p_{2}}$ are solutions of 
\begin{equation}
\max_{a\in \mathbb{R}^{p_{1}},b\in \mathbb{R}^{p_{2}}}\frac{a^{T}\Sigma
_{12}b}{\sqrt{a^{T}\Sigma _{1}a}\sqrt{b^{T}\Sigma _{2}b}}.  \label{eq:cca}
\end{equation}%
where we assume $\Sigma _{1}$ and $\Sigma _{2}$ are invertible and $\Sigma
_{12}$ is nonzero such that the maximization problem is not degenerate.
Notice when $(\theta ,\eta )$ is the solution of (\ref{eq:cca}), $(\sigma
_{1}\theta ,\sigma _{2}\eta )$ is also the solution with arbitrary scalars $%
(\sigma _{1},\sigma _{2})$ satisfying $\sigma _{1}\sigma _{2}>0$. To achieve
identifiability up to a sign, (\ref{eq:cca}) can be reformulated into the
following optimization problem. 
\begin{equation}
\mbox{\rm maximize }a^{T}\Sigma _{12}b,\quad \mbox{\rm subject to }%
a^{T}\Sigma _{1}a=1\quad \mbox{\rm and}\quad b^T\Sigma _{2}b=1.
\label{eq:cca*}
\end{equation}

\begin{proposition}
\label{prop:characterization}When $\Sigma _{12}$ is of rank $1$, the
solution (up to sign jointly) of Equation (\ref{eq:cca*}) is $\left( \theta
,\eta \right) $ if and only if the covariance structure between $X$ and $Y$
can be written as 
\begin{equation*}
\Sigma _{12}=\lambda \Sigma _{1}\theta \eta ^{T}\Sigma _{2},
\end{equation*}%
where $0<\lambda \leq 1,$ $\theta ^{T}\Sigma _{1}\theta =1$ and $\eta
^{T}\Sigma _{2}\eta =1$. In other words, the correlation between $a^{T}X$
and $b^{T}Y$ are maximized by $\mbox{\rm corr}(\theta ^{T}X,\eta ^{T}Y)$,
and $\lambda $ is the canonical correlation between $X$ and $Y$.
\end{proposition}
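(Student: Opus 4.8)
The plan is to reduce the constrained bilinear maximization to an ordinary singular value problem by the standard whitening change of variables, and then read off the rank-one structure. Specifically, I would set $u = \Sigma_1^{1/2} a$ and $v = \Sigma_2^{1/2} b$, so that the constraints $a^T\Sigma_1 a = 1$ and $b^T\Sigma_2 b = 1$ become $\|u\| = \|v\| = 1$, and the objective $a^T\Sigma_{12} b$ becomes $u^T M v$ with $M := \Sigma_1^{-1/2}\Sigma_{12}\Sigma_2^{-1/2}$. The maximum of $u^T M v$ over unit vectors is the largest singular value $\sigma_1(M)$ of $M$, attained (up to a joint sign) at the leading left/right singular vector pair $(u_1, v_1)$ when $\sigma_1(M)$ is simple; since $\Sigma_{12}$ — and hence $M$ — has rank $1$, we may write $M = \sigma_1 u_1 v_1^T$ and the maximizer is unique up to sign. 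Translating back, $(\theta,\eta) = (\Sigma_1^{-1/2} u_1,\ \Sigma_2^{-1/2} v_1)$.

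For the ``only if'' direction, suppose $(\theta,\eta)$ solves (\ref{eq:cca*}). Then $u_1 = \Sigma_1^{1/2}\theta$, $v_1 = \Sigma_2^{1/2}\eta$ are the leading singular vectors of $M$, and because $M$ has rank $1$ it equals $\sigma_1 u_1 v_1^T = \sigma_1 \Sigma_1^{1/2}\theta\eta^T\Sigma_2^{1/2}$. Multiplying on the left by $\Sigma_1^{1/2}$ and on the right by $\Sigma_2^{1/2}$ gives $\Sigma_{12} = \sigma_1 \Sigma_1\theta\eta^T\Sigma_2$, i.e. the claimed form with $\lambda = \sigma_1$. The normalizations $\theta^T\Sigma_1\theta = \|u_1\|^2 = 1$ and $\eta^T\Sigma_2\eta = \|v_1\|^2 = 1$ are immediate. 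For the ``if'' direction, assume $\Sigma_{12} = \lambda\Sigma_1\theta\eta^T\Sigma_2$ with the stated normalizations; then $M = \lambda(\Sigma_1^{1/2}\theta)(\Sigma_2^{1/2}\eta)^T$ is rank one with $\Sigma_1^{1/2}\theta$ and $\Sigma_2^{1/2}\eta$ unit vectors, so its unique nonzero singular value is $\lambda$ with singular vectors $\Sigma_1^{1/2}\theta,\ \Sigma_2^{1/2}\eta$; unwinding the substitution shows $(\theta,\eta)$ maximizes (\ref{eq:cca*}) with optimal value $\lambda$.

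It remains to check $0 < \lambda \le 1$: positivity follows since $\Sigma_{12} \ne 0$, and $\lambda \le 1$ is exactly the statement that a correlation coefficient is at most $1$ — concretely, $\lambda = \theta^T\Sigma_{12}\eta$ is $\mbox{\rm corr}(\theta^TX,\eta^TY)$ under the unit-variance normalizations, and by Cauchy--Schwarz applied to the random variables $\theta^TX$ and $\eta^TY$ this is bounded by $1$. Equivalently, one can invoke that $\Sigma \succeq 0$ forces $\|M\| \le 1$: the Schur complement condition for positive semidefiniteness of the joint covariance is $\Sigma_2 \succeq \Sigma_{21}\Sigma_1^{-1}\Sigma_{12}$, which after whitening reads $I \succeq M^T M$, hence $\sigma_1(M) \le 1$.

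The only genuinely delicate point is the uniqueness claim: ``the solution is $(\theta,\eta)$'' presupposes the maximizer is unique up to a joint sign, which requires $\sigma_1(M)$ to be a \emph{simple} singular value. This is where the rank-one hypothesis on $\Sigma_{12}$ does the work — a rank-one matrix has exactly one nonzero singular value, automatically simple — so no extra eigengap assumption is needed; I would state this explicitly rather than treat it as routine. Everything else is bookkeeping with the whitening map and properties of the SVD.
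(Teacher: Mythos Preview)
Your proposal is correct and follows exactly the approach the paper itself indicates: the paper states that the proposition ``is just an elementary consequence of SVD after transforming the parameters $\theta$ and $\eta$ into $\Sigma_1^{1/2}\theta$ and $\Sigma_2^{1/2}\eta$ respectively'' and omits the details for reasons of space. Your whitening substitution $u=\Sigma_1^{1/2}a$, $v=\Sigma_2^{1/2}b$ reducing to the SVD of $M=\Sigma_1^{-1/2}\Sigma_{12}\Sigma_2^{-1/2}$ is precisely this, and your handling of both directions, the bound $0<\lambda\le 1$, and the uniqueness point via the rank-one hypothesis are all sound.
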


The Proposition above is just an elementary consequence of SVD after
transforming the parameters $\theta $ and $\eta $ into $\Sigma
_{1}^{1/2}\theta $ and $\Sigma _{2}^{1/2}\eta $ respectively. For the
reasons of space, the proof is omitted. For general $\Sigma _{12}$ with rank 
$r\geq 1,$ it's a routine extension to see that the unique (up to sign
jointly) solution of Equation (\ref{eq:cca*}) is $\left( \theta _{1},\eta
_{1}\right) $ if and only if the covariance structure between $X$ and $Y$
can be written as 
\begin{equation*}
\Sigma _{12}=\Sigma _{1}\left( \sum_{i=1}^{r}\lambda _{i}\theta _{i}\eta
_{i}^{T}\right) \Sigma _{2},
\end{equation*}%
where $\lambda _{i}$ decreases $\lambda _{1}>\lambda _{2}\geq \ldots \geq
\lambda _{r}>0$, $\left\{ \theta _{i}\right\} $ and $\left\{ \eta
_{i}\right\} $ are orthonormal w.r.t. metric $\Sigma _{1}$ and $\Sigma _{2}\ 
$respectively. i.e. $\theta _{i}^{T}\Sigma _{1}\theta _{j}=\mathbb{I}\{i=j\}$
and $\eta _{i}^{T}\Sigma _{2}\eta _{j}=\mathbb{I}\{i=j\}$.

Inspired by (\ref{eq:cca*}), we propose a probabilistic model of $(X,Y)$, so
that the canonical directions $(\theta ,\eta )$ are explicitly modeled in
the joint distribution of $(X,Y)$. 
\begin{framed}
\noindent \textbf{The Single Canonical Pair Model}
\begin{equation}
\begin{pmatrix}
X \\ Y
\end{pmatrix}\sim N\begin{pmatrix}
\begin{pmatrix}
0 \\ 0
\end{pmatrix}, \begin{pmatrix}
\Sigma_1 & \lambda\Sigma_1\theta\eta^T\Sigma_2 \\
\lambda\Sigma_2\eta\theta^T\Sigma_1 & \Sigma_2
\end{pmatrix}
\end{pmatrix}, \label{eq:model}
\end{equation}
with $\Sigma_1\succ 0$, $\Sigma_2\succ 0$, $\theta^T\Sigma_1\theta=1$, $\eta^T\Sigma_2\eta=1$ and $0<\lambda\leq 1$.
\end{framed}

Just as the single-spike model in PCA \citep{tipping99, johnstone09}, the
model (\ref{eq:model}) explicitly models $(\lambda ,\theta ,\eta )$ in the
form of the joint distribution of $(X,Y)$. Besides, it can be generalized to
multiple canonical-pair structure as in the multi-spike model %
\citep{birnbaum12}. On the other hand, it is fundamentally different from
the single-spike model, because $(\Sigma _{1},\Sigma _{2})$ are typically
unknown, so that estimating $(\theta ,\eta )$ is much harder than estimating
the spike in PCA. Even when both $\Sigma _{1}$ and $\Sigma _{2}$ are
identity and $\Sigma _{12}=\lambda \theta \eta ^{T}$, it cannot be reduced
into the form of spike model. \cite{bach05} also proposed a statistical
model for studying CCA in a probabilistic setting. Under their model, the
data has a latent variable representation. It can be shown that the model we
propose is equivalent to theirs in the sense that both can be written into
the form of the other. The difference is that we explicitly model the
canonical directions $(\theta ,\eta )$ in the covariance structure for
sparse CCA.

\section{Methodology}

\label{sec:method}

In this section, we introduce the CAPIT algorithm to estimate the sparse
canonical direction pair $(\theta ,\eta )$ in the single canonical pair
model in details. We start with the main part of the methodology in Section \ref%
{sec:method IT}, an iterative thresholding algorithm, requiring an
initializer and consistent estimators of precision matrices (nuisance
parameters). Then in Section \ref{sec:method CT} we introduce a coordinate
thresholding algorithm to provide a consistent initializer. Finally, in Section %
\ref{sec::estimation of precision} rate-optimal estimators of precision
matrices are reviewed over various settings.

The procedure is motivated by the power method, a standard technique to
compute the leading eigenvector of a given symmetric matrix $S$ %
\citep{golub1996matrix}. Let $S$ be a $p\times p$ symmetric matrix. We
compute its leading eigenvector. Starting with a vector $v^{(0)}$
non-orthogonal to the leading eigenvector, the power method generates a
sequence of vectors $v^{(i)}$, $i=1,2,\ldots ,$ by alternating the
multiplication step $w^{(i)}=Sv^{(i-1)}$ and the normalization step $%
v^{(i)}=w^{(i)}/\left\Vert w^{(i)}\right\Vert $ until convergence. The limit
of the sequence, denoted by $v^{(\infty )}$, is the leading eigenvector. The
power method can be generalized to compute the leading singular vectors of
any $p_{1}\times p_{2}$ dimensional rectangular matrix $M$. Suppose the SVD of
a rank $d$ matrix $M$ is $M=UDV^{T}$, where $D$ is the $d$ dimensional
diagonal matrix with singular values on the diagonal. Suppose we are given
an initial pair $\left( u^{(0)},v^{(0)}\right) $, non-orthogonal to the
leading singular vectors. To compute the leading singular vectors, power
method alternates the following steps until $\left( u^{(0)},v^{(0)}\right) $
converges to $\left( u^{(\infty )},v^{(\infty )}\right) $, which are the
left and right leading singular vectors.

\begin{enumerate}
\item Right Multiplication: $w_{l}^{(i)}=Mv^{(i-1)},$

\item Left Normalization: $u^{(i)}=w_{l}^{(i)}/\left\Vert
w_{l}^{(i)}\right\Vert ,$

\item Left Multiplication: $w_{r}^{(i)}=u^{(i)}M,$

\item Right Normalization: $v^{(i)}=w_{r}^{(i)}/\left\Vert
w_{r}^{(i)}\right\Vert .$
\end{enumerate}

Our goal is to estimate the canonical direction pair $\left( \theta ,\eta
\right) $. The power method above motivates us to find a matrix $\hat{A}$
close to $\lambda \theta \eta ^{T}$ of which $\left( \theta /\left\Vert
\theta \right\Vert ,\eta /\left\Vert \eta \right\Vert \right) $ is the
leading pair of singular vectors. Note that the covariance structure is $\Sigma
_{12}=\lambda \Sigma _{1}\theta \eta ^{T}\Sigma _{2}$. Suppose we know the
marginal covariance structures of $X$ and $Y$, i.e. $\Omega _{1} = \Sigma_1^{-1}$ and $%
\Omega _{2} = \Sigma_2^{-1}$ are given, it is very natural to consider $\Omega _{1}\hat{%
\Sigma}_{12}\Omega _{2}$ as the target matrix, where $\hat{\Sigma}_{12}$ is
the sample cross-covariance between $X$ and $Y$. Unfortunately, the
covariance structures $\Omega _{1}$ and $\Omega _{2}$ are unknown as
nuisance parameters, but a rate-optimal estimator of $\Omega _{j}$ ($j=1,2$)
usually can be obtained under various assumptions on the covariance or
precision structures of $X$ and $Y$ in many high-dimensional settings. In
literature, some commonly used structures are sparse precision matrix,
sparse covariance matrix, bandable covariance matrix and Toeplitz covariance
matrix structures. Later we will discuss the estimators of the precision
matrices and their influences to the final estimation error of canonical
direction pair $\left( \theta ,\eta \right) $.

We consider the idea of data splitting. Suppose we have $2n$ i.i.d. copies $%
(X_{i},Y_{i})_{1\leq i\leq 2n}$. We use the first half to compute the sample
covariance $\hat{\Sigma}_{12}=\frac{1}{n}\sum_{i=1}^{n}X_{i}Y_{i}^{T}$, and
use the second half to estimate the precision matrices by $\hat{\Omega}_{1}$
and $\hat{\Omega}_{2}$. Hence the matrix $\hat{A}=\hat{\Omega}_{1}\hat{\Sigma%
}_{12}\hat{\Omega}_{2}$ is available to us. The reason for data splitting is
that we can write the matrix $\hat{A}$ in an alternative form. That is, 
\begin{equation*}
\hat{A}=\frac{1}{n}\sum_{i=1}^{n}\tilde{X}_{i}\tilde{Y}_{i}^{T},
\end{equation*}%
where $\tilde{X}_{i}=\hat{\Omega}_{1}X_{i}$ and $\tilde{Y}_{i}=\hat{\Omega}%
_{2}Y_{i}$ for all $i=1,...,n$. Conditioning on $%
(X_{n+1},Y_{n+1}),...,(X_{2n},Y_{2n})$, the transformed data $(\tilde{X}_{i},%
\tilde{Y}_{i})_{1\leq i\leq n}$ are still independently identically
distributed. This feature allows us to explore some useful concentration
results in the matrix $\hat{A}$ to prove theoretical results. Conditioning
on the second half of data, the expectation of $\hat{A}$ is $\lambda \alpha
\beta ^{T}$, where $\alpha =\hat{\Omega}_{1}\Sigma _{1}\theta $ and $\beta =%
\hat{\Omega}_{2}\Sigma _{2}\eta $. Therefore, the method we develop is
targeted at $(\alpha ,\beta )$ instead of $(\theta ,\eta )$. However, as
long as the estimators $(\hat{\Omega}_{1},\hat{\Omega}_{2})$ are accurate in
the sense that 
\begin{equation*}
||\alpha -\theta ||\vee ||\beta -\eta ||=||(\hat{\Omega}_{1}\Sigma
_{1}-I)\theta ||\vee ||(\hat{\Omega}_{2}\Sigma _{2}-I)\eta ||
\end{equation*}%
is small, the final rate of convergence is also small.

If we naively apply the power method above to $\hat{A}=\hat{\Omega}_{1}%
\hat{\Sigma}_{12}\hat{\Omega}_{2}$ in high-dimensional setting, the
estimation variance accumulated across all $p_{1}$ and $p_{2}$ coordinates
of left and right singular vectors goes very large and it is possible that we can never
obtain a consistent estimator of the space spanned by the singular vectors. 
\cite{johnstone09} proved that when $p/n\nrightarrow 0$, the leading
eigenspace estimated directly from the sample covariance matrix can be
nearly orthogonal to the truth under the PCA setting in which $\hat{A}$ is
the sample covariance matrix with dimension $p_{1}=p_{2}=p$. Under the
sparsity assumption of $\left( \theta ,\eta \right) $, a natural way of
reducing the estimation variance is to only estimate those coordinates with
large values in $\theta $ and $\eta $ respectively and simply estimate the
rest coordinates by zero. Although bias is caused by this thresholding idea,
in the end the variance reduction dominates the biased inflation and this
trade-off minimizes the estimation error to the optimal rate. The idea of
combining the power method and the iterative thresholding procedure leads to the
algorithm in the next section which was also proposed by \cite%
{yang2013sparse} for a general data matrix $\hat{A}$ without a theoretical
analysis.

\subsection{Iterative Thresholding}

\label{sec:method IT}

We incorporate the thresholding idea into ordinary power method above for
SVD by adding a thresholding step after each right and left multiplication
steps before normalization. The thresholding step kills those coordinates
with small magnitude to zero and keep or shrink the rest coordinates through
a thresholding function $T\left( a,t\right) $ in which $a$ is a vector and $%
t $ is the thresholding level. In our theoretical analysis, we assume that $%
T\left( a,t\right) =\Big(a_k \mathbb{I}\{|a_k|\geq t\}\Big)$ is the
hard-thresholding function, but any function serves the same purpose in
theory as long as it satisfies (i) $\left\vert T\left( a,t\right)_k
-a_k\right\vert \leq t$ and (ii) $T\left( a,t\right)_k =0$ whenever $%
\left\vert a_k\right\vert <t$. Therefore the thresholding function can be
hard-thresholding, soft-thresholding or SCAD \citep{fan2001variable}. The
algorithm is summarized below.

\begin{algorithm}[H]
 \SetAlgoLined
 \KwIn{Sample covariance matrices $\hat{\Sigma}_{12}$\;
       Estimators of precision matrix $\hat{\Omega}_1,\hat{\Omega}_{2}$\;
       Initialization pair $\alpha^{(0)}, \beta^{(0)}$\;
       Thresholding level $\gamma_1,\gamma_2$.}
 \KwOut{Canonical direction estimator $\alpha^{(\infty)}, \beta^{(\infty)}$.}
 Set $\hat{A}=\hat{\Omega}_{1}\hat{\Sigma}_{12}\hat{\Omega}_{2}$\;
 \Repeat {Convergence of $\alpha^{(i)}$ and $\beta^{(i)}$}{
  Right Multiplication: $w^{l,(i)}=\hat{A}\beta^{(i-1)}$\;
  Left Thresholding: $w_{th}^{l,(i)}=T\left(w^{l,(i)},\gamma_1\right)$\;
  Left Normalization: $\alpha^{(i)}=w_{th}^{l,(i)}/\left\Vert w_{th}^{l,(i)}\right\Vert$\;
  Left Multiplication: $w^{r,(i)}=\alpha^{(i)}\hat{A}$\;
  Right Thresholding: $w_{th}^{r,(i)}=T\left(w^{r,(i)},\gamma_2\right)$\;
  Right Normalization: $\beta^{(i)}=w_{th}^{r,(i)}/\left\Vert w_{th}^{r,(i)}\right\Vert$;}
  \caption{CAPIT: Iterative Thresholding}
  \label{alg:: ITSCCA}
\end{algorithm}

\begin{remark}
In Algorithm \ref{alg:: ITSCCA}, we don't provide specific stopping rule
such as that the difference between successive iterations is small enough.
For the single canonical pair model, we are able to show in Section \ref%
{sec:theory} that the convergence is achieved in just one step. The
intuition is simple: when $A$ is of exact rank one, we can simply obtain the
left singular vector via right multiplying $A$ by any vector non-orthogonal
to the right singular vector. Although in the current setting $\hat{A}$ is
not a rank one matrix, the effect caused from the second singular value in
nature does not change the statistical performance of our final estimator.
\end{remark}

\begin{remark}
The thresholding level $(\gamma _{1},\gamma _{2})$ are user-specified.
Theoretically, they should be set at the level $O\Big(\sqrt{\frac{\log
(p_{1}\vee p_{2})}{n}}\Big)$. In Section \ref{sec:threshconst}, we present a
fully data-driven $(\gamma _{1},\gamma _{2})$ along with the proof.
\end{remark}

\begin{remark}
The estimator $(\alpha^{(\infty)},\beta^{(\infty)})$ does not directly
estimate $(\theta,\eta)$ because the former are unit vectors while the later
satisfies $\theta^T\Sigma_1\theta=\eta^T\Sigma_2\eta=1$. We are going to
prove they are almost in the same direction by considering the loss function 
$|\sin \angle (a,b)|^2$ in \cite{johnstone09}. Details are presented in
Section \ref{sec:theory}.
\end{remark}

\begin{remark}
The estimators of precision matrices $\hat{\Omega}_{1}$ and $\hat{\Omega}_{2}
$ depend on the second half of the data and the estimator $\hat{\Sigma}_{12}$
depends on the first half of the data. In practice, after we apply Algorithm %
\ref{alg:: ITSCCA}, we will swap the two parts of the data and use the first
half to get $\hat{\Sigma}_{12}$ and the second half to obtain $\hat{\Omega}%
_{1},\hat{\Omega}_{2}$. Then, Algorithm \ref{alg:: ITSCCA} is run again on
the new estimators. The final estimator can be calculated through averaging
the two. More generally, we can do sample splitting many times and take an
average as Bagging, which is often used to improve the stability and
accuracy of machine learning algorithms.
\end{remark}

\subsection{Initialization by Coordinate Thresholding}

\label{sec:method CT}

In Algorithm \ref{alg:: ITSCCA}, we need to provide an initializer $\left(
\alpha ^{(0)},\beta ^{(0)}\right) $ as input. We generate a sensible
initialization in this section which is similar to the \textquotedblleft
diagonal thresholding\textquotedblright\ sparse PCA method proposed by \cite%
{johnstone09}. Specifically, we apply a thresholding step to pick index sets 
$B_{1}$ and $B_{2}$ of the coordinates of $\theta $ and $\eta $
respectively. Those index sets can be thought as strong signals. Then a
standard SVD is applied on the submatrix of $\hat{A}$ with rows and columns
indexed by $B_{1}$ and $B_{2}$. The dimension of this submatrix is
relatively low such that the SVD on it is fairly accurate. The leading pair
of singular vectors is of dimension $\left\vert B_{1}\right\vert $ and $%
\left\vert B_{2}\right\vert $, where $\left\vert \cdot \right\vert $ denotes
the cardinality. In the end, we zero-pad the leading pair of singular
vectors into dimension $p_{1}$ and $p_{2}$ respectively to provide our
initializer $\left( \alpha ^{(0)},\beta ^{(0)}\right) $. The algorithm is
summarized in Algorithm \ref{alg:: CTSCCA}.

\begin{algorithm}[H]
 \SetAlgoLined
 \KwIn{Sample covariance matrices $\hat{\Sigma}_{12}$\;
       Estimators of precision matrix $\hat{\Omega}_1,\hat{\Omega}_{2}$\;
       Thresholding level $t_{ij}$.}
 \KwOut{Initializer $\alpha^{(0)}$ and $\beta^{(0)}$.}
 \setcounter{AlgoLine}{1}
 Set $\hat{A}=\hat{\Omega}_{1}\hat{\Sigma}_{12}\hat{\Omega}_{2}$ \;
 \ShowLn Coordinate selection: pick the index sets $B_{1}$ and $B_{2}$ of the
coordinates of $\theta $ and $\eta $ respectively as follows,
$B_{1}=\left\{ i,\max_{j}\left\vert \hat{a}_{ij}\right\vert /t_{ij}\geq \sqrt{%
\frac{\log p_{1}}{n}}\right\} ,B_{2}=\left\{ j,\max_{i}\left\vert \hat{a}%
_{ij}\right\vert /t_{ij}\geq \sqrt{\frac{\log p_{2}}{n}}\right\} $\;
 \setcounter{AlgoLine}{2}
 \ShowLn Reduced SVD: compute the leading pair of singular vectors $\left(
\alpha ^{(0),B_{1}},\beta ^{(0),B_{2}}\right) $ on the submatrix $\hat{A}_{B_{1},B_{2}}$\;
 \setcounter{AlgoLine}{3}
 \ShowLn Zero-Padding procedure: construct the initializer $\left( \alpha
^{(0)},\beta ^{(0)}\right) $ by zero-padding $\left( \alpha
^{(0),B_{1}},\beta ^{(0),B_{2}}\right) $ on index sets $B_{1}^{c}$ and $%
B_{2}^{c}$ respectively,
$
 \alpha _{B_{1}}^{(0)}=\alpha ^{(0),B_{1}},\alpha _{B_{1}^{c}}^{(0)}=0,\beta
_{B_{2}}^{(0)}=\beta ^{(0),B_{2}},\beta _{B_{2}^{c}}^{(0)}=0.
$
  \caption{CAPIT: Initialization by Coordinate Thresholding}
  \label{alg:: CTSCCA}
\end{algorithm}

The thresholding level $t_{ij}$ in Algorithm \ref{alg:: CTSCCA} is a user
specified constant and allowed to be adaptive to each location $\left(
i,j\right) $. The theoretical data-driven constant for each $t_{ij}$ is
provided in\ Section \ref{sec:threshconst}. It is clear the initializer is
not unique since if $\left( \alpha ^{(0)},\beta ^{(0)}\right) $ serves as
the output, $\left( -\alpha ^{(0)},-\beta ^{(0)}\right) $ is also a solution
of Algorithm \ref{alg:: CTSCCA}. However either pair works as an initializer
and provides the same result because in the end we estimate the space
spanned by leading pair of singular vectors.

\subsection{Precision Estimation}

\label{sec::estimation of precision}

Algorithms \ref{alg:: ITSCCA} and \ref{alg:: CTSCCA} require precision
estimators $\hat{\Omega}_{1}$ and $\hat{\Omega}_{2}$ to start with. As we
mentioned, we apply the second half of the data to estimate the precision
matrix $\hat{\Omega}_{1}$ and $\hat{\Omega}_{2}$. In this section, we
discuss four commonly assumed covariance structures of $X$ itself and
provide corresponding estimators. We apply the same procedure to $Y$.

\subsubsection{Sparse Precision Matrices}

\label{sec:sparseprecision}

Precision matrix is closely connected to the undirected graphical model
which is a powerful tool to model the relationships among a large number of
random variables in a complex system. It is well known that recovering the
structure of an undirected Gaussian graph is equivalent to recovering the
support of the precision matrix. In this setting, it is natural to impose
sparse graph structure among variables in $X$ by assuming sparse precision
matrices $\Omega _{1}$. Many algorithms targeting on estimating sparse
precision matrix were proposed in literature. See, e.g. \cite{MB06}, \cite%
{friedman2008sparse}, \cite{cai11} and \cite{ren13}. In the current paper,
we apply the CLIME method to estimate $\Omega _{1}$. For details of the
algorithm, we refer to \cite{cai11}.

\subsubsection{Bandable Covariance Matrices}

Motivated by applications in time series, where there is a natural ``order"
on the variables, the bandable class of covariance matrices was proposed by 
\cite{BL08A}. In this setting, we assume that $\sigma _{ij}$ decay to zero
at certain rate as $\left\vert i-j\right\vert $ goes away from the diagonal.
Usually regularizing the sample covariance matrix by banding or tapering
procedures were applied in literature. We apply the tapering method proposed
in \cite{CZZH10}. Let $\omega =(\omega _{m})_{0\leq m\leq p-1}$ be a weight
sequence with $\omega _{m}$ given by 
\begin{equation}
\mathbb{\omega }_{m}=\left\{ 
\begin{array}{cc}
1, & \mbox{\rm when }m\leq k/2 \\ 
2-\frac{2m}{k}, & \mbox{\rm when }k/2<m\leq k \\ 
0, & \mbox{\rm Otherwise}%
\end{array}%
\right. ,  \label{tapering weight}
\end{equation}%
where $k$ is the bandwidth. The tapering estimator $\hat{\Sigma}_{1}$ of the
covariance matrix of $X$ is given by $\hat{\Sigma}_{1}=(\hat{\sigma}%
_{ij}^{sam}\mathbb{\omega }_{\left\vert i-j\right\vert })$, where $\hat{%
\sigma}_{ij}^{sam}$ is the $(i,j)$-th entry of the sample covariance matrix.
The bandwidth $k$ is chosen through cross-validation in practice. An alternative adaptive method was proposed by \cite{caiyuan2012}.  In the
end, our estimator is $\hat{\Omega}_{1}=\hat{\Sigma}_{1}^{-1}$.

\subsubsection{Toeplitz Covariance Matrices}

Toeplitz matrix is the symmetric matrix that the entries are constant along
the off-diagonals which are parallel to the main diagonal. Class of Toeplitz
covariance matrices arises naturally in the analysis of stationary
stochastic processes. If $X$ is a stationary process with autocovariance
sequence $(\alpha _{m})\equiv \left( \alpha _{0},\alpha _{1},\cdots ,\alpha
_{p-1},\cdots \right) ,$ then the covariance matrix $\Sigma _{1}=\left(
\sigma _{ij}\right) _{p_{1}\times p_{1}}$ has a Toeplitz structure $\sigma
_{ij}=\alpha _{\left\vert i-j\right\vert }$. In this setting, it is natural
to assume certain rate of decay of the autocovariance sequence. We apply the
following tapering method proposed in \cite{CRZ12}. Define $\tilde{\sigma}%
_{m}={\frac{1}{p-m}}\sum_{s-t=m}\hat{\sigma}_{st}^{sam},$ the average of
sample covariance along each off-diagonal. Then the tapering estimator $\hat{%
\Sigma}_{1}=\left( \hat{\sigma}_{st}\right) $ with bandwidth $k$ is defined
as $\hat{\sigma}_{st}=\omega _{|s-t|}\tilde{\sigma}_{|s-t|},$ where $\omega
=(\omega _{m})_{0\leq m\leq p-1}$ is defined in Equation (\ref{tapering
weight}). In practice, we pick bandwidth $k$ using cross-validation. The
final estimator of $\Omega _{1}$ is then defined as $\hat{\Omega}_{1}=\hat{%
\Sigma}_{1}^{-1}$.

\subsubsection{Sparse Covariance Matrices}

In many applications, there is no natural order on the variables like we
assumed in bandable and Toeplitz covariance matrices. In this setting,
permutation-invariant estimators are favored and general sparsity assumption
is usually imposed on the whole covariance matrix, i.e. most of entries in
each row/column of covariance matrix are zero or negligible. We apply a hard
thresholding procedure proposed in \cite{BL08B} under this assumption.
Again, let $\hat{\sigma}_{ij}^{sam}$ be the $(i,j)$-th entry of the sample
covariance matrix of $X$. The thresholding estimator $\hat{\Sigma}%
_{1}=\left( \hat{\sigma}_{st}\right) $ is given by $\hat{\sigma}_{ij}=\hat{%
\sigma}_{ij}^{sam}I\left( |\hat{\sigma}_{ij}^{sam}|\geq \gamma \sqrt{\frac{%
\log p}{n}}\right) $ for some constant $\gamma $ which is chosen through
cross-validation. In the end, our estimator is $\hat{\Omega}_{1}=\hat{\Sigma}%
_{1}^{-1}$.

\section{Statistical Properties and Optimality}

\label{sec:theory}

In this section, we present the statistical properties and optimality of our
proposed estimator. We first present the convergence rates of our procedure,
and then we provide a minimax lower bound for a wide range of parameter
spaces. In the end, we can see when estimating the nuisance parameters is
not harder than estimating canonical direction pair, the rates of
convergence match the minimax lower bounds. Hence we obtain the minimax
rates of convergence for a range of sparse parameter spaces.

\subsection{Convergence Rates}

Notice that our model is fully determined by the parameter $%
(\Sigma_1,\Sigma_2,\lambda,\theta,\eta)$, among which we are interested in
estimating $(\theta,\eta)$. To achieve statistical consistency, we need some
assumptions on the interesting part $(\theta,\eta)$ and nuisance part $%
(\Sigma_1,\Sigma_2,\lambda)$.

\noindent\textbf{Assumption A - Sparsity Condition on $(\theta,\eta)$}:

We assume $\theta $ and $\eta $ are in the weak $l_{q}$ ball, with $0\leq
q\leq 2$. i.e.%
\begin{equation*}
\left\vert \theta _{\left( k\right) }\right\vert ^{q}\leq s_{1}k^{-1}%
\mbox{\rm ,
}\left\vert \eta _{\left( k\right) }\right\vert ^{q}\leq s_{2}k^{-1}%
\mbox{\rm ,}
\end{equation*}%
where $\theta _{\left( k\right) }$ is the $k$-th largest coordinate by
magnitude. Let $p=p_{1}\vee p_{2}$ and $s=s_{1}\vee s_{2}$. The sparsity
levels $s_{1}$ and $s_{2}$ satisfy the following condition, 
\begin{equation}
s=o\left( \left( \frac{n}{\log p}\right) ^{\frac{1}{2}-\frac{q}{4}}\right) %
\mbox{\rm .}  \label{SP condition}
\end{equation}

\begin{remark}
In general, we can allow $\theta $ to be in the weak $l_{q_{1}}$ ball and $%
\eta $ to be in the weak $l_{q_{2}}$ ball with $q_{1}\neq q_{2}$. In that
case, we require $s_{i}=o\left( \left( n/\log p\right) ^{\frac{1}{2}-\frac{%
q_{i}}{4}}\right) $ for $i=1,2$. There is no fundamental difference in the
analysis and procedures. For simplicity, in the paper we only consider $%
q_{1}=q_{2}$.
\end{remark}

\noindent\textbf{Assumption B - General Conditions on $(\Sigma_1,\Sigma_2,%
\lambda)$}:

\begin{enumerate}
\item We assume there exist constants $w$ and $W$, such that 
\begin{equation*}
0<w\leq \lambda_{\min}(\Sigma_i)\leq \lambda_{\max}(\Sigma_i)\leq W<\infty,
\end{equation*}
for $i=1,2$.

\item In order that the signals do not vanish, we assume the canonical
correlation is bounded below by a positive constant $C_{\lambda }$, i.e. $%
0<C_{\lambda }\leq \lambda \leq 1$.

\item Moreover, we require that estimators $(\hat{\Omega}_{1},\hat{\Omega}%
_{2})$ are consistent in the sense that 
\begin{equation}
\xi _{\Omega }=||\hat{\Omega}_{1}\Sigma _{1}-I||\vee ||\hat{\Omega}%
_{2}\Sigma _{2}-I||=o(1),  \label{Consistency condition}
\end{equation}%
with probability at least $1-O(p^{-2})$.
\end{enumerate}

\noindent\textbf{Loss Function}

For two vectors $a,b$, a natural way to measure the discrepancy of their
directions is the sin of the angle $|\sin\angle (a,b)|$, see \cite%
{johnstone09}. We consider the loss function $L(a,b)^2=2|\sin\angle(a,b)|^2$%
. It is easy to calculate that 
\begin{equation*}
L(a,b)=\left\|\frac{aa^T}{||a||^2}-\frac{bb^T}{||b||^2}\right\|_F.
\end{equation*}

The convergence rate of the CAPIT procedure is presented in the following
theorem.

\begin{thm}
\label{thm:main} Assume the Assumptions A and B above hold. Let $(\alpha
^{(k)},\beta ^{(k)})$ be the sequence from Algorithm \ref{alg:: ITSCCA},
with the initializer $(\alpha ^{(0)},\beta ^{(0)})$ calculated by Algorithm %
\ref{alg:: CTSCCA}. The thresholding levels are 
\begin{equation*}
t_{ij},\quad \gamma _{1}=c_{1}\sqrt{\frac{\log p}{n}},\quad \gamma _{2}=c_{2}%
\sqrt{\frac{\log p}{n}},
\end{equation*}%
for sufficiently large constants $(t_{ij},c_{1},c_{2})$. Then with
probability at least $1-O(p^{-2})$, we have 
\begin{equation*}
L(\alpha ^{(k)},\theta )^{2}\vee L(\beta ^{(k)},\eta )^{2}\leq C\left( s\Big(%
\frac{\log p}{n}\Big)^{1-q/2}+||(\hat{\Omega}_{1}\Sigma _{1}-I)\theta
||^{2}\vee ||(\hat{\Omega}_{2}\Sigma _{2}-I)\eta ||^{2}\right) ,
\end{equation*}%
for all $k=1,2,...,K$ with $K=O(1)$ and some constant $C>0$.
\end{thm}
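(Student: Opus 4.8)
The plan is to analyze one iteration of Algorithm~\ref{alg:: ITSCCA} conditionally on the second half of the data (so that $\hat\Omega_1,\hat\Omega_2$ are fixed and $\hat A=\frac1n\sum_{i=1}^n \tilde X_i\tilde Y_i^T$ with $\tilde X_i=\hat\Omega_1 X_i$, $\tilde Y_i=\hat\Omega_2 Y_i$ i.i.d. Gaussian), and to show a single step of right-multiplication--threshold--normalize contracts the loss toward the target rate. Write $\hat A = \lambda\alpha\beta^T + E$, where $\alpha=\hat\Omega_1\Sigma_1\theta$, $\beta=\hat\Omega_2\Sigma_2\eta$ is the conditional mean and $E=\hat A-\mathbb E[\hat A\mid \text{2nd half}]$ is the noise. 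The first task is to control $E$ in the relevant norms: I would establish, with probability $1-O(p^{-2})$, a coordinatewise/sup bound $\|E\|_{\infty}\lesssim \sqrt{\tfrac{\log p}{n}}$ (uniform over entries, driving the choice of $\gamma_1,\gamma_2,t_{ij}$), a restricted operator-norm bound $\|E_{S_1\times S_2}\|\lesssim \sqrt{\tfrac{|S_1|\vee|S_2|+\log p}{n}}$ for all small index sets, and bounds on $\|E\beta\|$-type quantities restricted to sparse supports. These are standard sub-Gaussian/sub-exponential concentration facts for sample cross-covariances of Gaussians, but must be made uniform over supports via a union bound over $\binom{p}{k}$ sets — this is where the sparsity condition $s=o((n/\log p)^{1/2-q/4})$ gets used.

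Next I would handle the bias from the nuisance estimation. The algorithm targets $(\alpha,\beta)$, not $(\theta,\eta)$; but $\alpha-\theta=(\hat\Omega_1\Sigma_1-I)\theta$ and similarly for $\beta$, and under Assumption~B(3) $\|\alpha-\theta\|\vee\|\beta-\eta\|=\xi_\Omega\,\cdot\text{(bounded)}=o(1)$. So it suffices to prove the loss bound with $(\alpha,\beta)$ in place of $(\theta,\eta)$ on the right-hand side replaced by the stated $\|(\hat\Omega_1\Sigma_1-I)\theta\|^2\vee\|(\hat\Omega_2\Sigma_2-I)\eta\|^2$ term, and then invoke the triangle-type inequality for $L(\cdot,\cdot)$ (equivalently for $|\sin\angle|$): $L(\alpha^{(k)},\theta)\le L(\alpha^{(k)},\alpha)+L(\alpha,\theta)$ and $L(\alpha,\theta)\lesssim\|\alpha-\theta\|$ since $\|\alpha\|,\|\theta\|$ are bounded below by constants under Assumption~B(1). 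Hence the core is: show $L(\alpha^{(k)},\alpha)^2\vee L(\beta^{(k)},\beta)^2 \lesssim s(\log p/n)^{1-q/2} + \xi_\Omega^2$ for all $k\le K$.

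For the core contraction I would argue in two stages. Stage one (initialization): the coordinate-thresholding Algorithm~\ref{alg:: CTSCCA} selects index sets $B_1,B_2$ that, with high probability, capture all ``large'' coordinates of $\theta$ (those exceeding a $\sqrt{\log p/n}$-scale cutoff) while having cardinality $\lesssim s(\log p/n)^{-q/2}$; the dropped coordinates contribute only $O(s(\log p/n)^{1-q/2})$ to the squared loss by the weak-$\ell_q$ tail bound. A reduced SVD on $\hat A_{B_1,B_2}$ is then accurate by a Davis--Kahan / Wedin argument because the submatrix dimension is small enough that $\|E_{B_1\times B_2}\|$ is dominated by $\lambda\cdot(\text{signal on }B_1,B_2)$ using Assumption~B(2); this gives $L(\alpha^{(0)},\alpha)^2\vee L(\beta^{(0)},\beta)^2 = o(1)$, in fact at the target rate up to constants. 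Stage two (one-step improvement): given $\beta^{(i-1)}$ with $L(\beta^{(i-1)},\beta)=o(1)$, decompose $\hat A\beta^{(i-1)}=\lambda\langle\beta,\beta^{(i-1)}\rangle\,\alpha/\|\alpha\| \cdot\|\alpha\| + (\text{error})$, where $\langle\beta,\beta^{(i-1)}\rangle$ is bounded below (non-orthogonality) and the error has sup-norm $\lesssim \sqrt{\log p/n}$ plus a term controlled by $\|\beta^{(i-1)}-\beta\|$ restricted to its (sparse) support; thresholding at $\gamma_1\asymp\sqrt{\log p/n}$ kills noise-only coordinates while, by property~(i) of $T$, perturbing retained coordinates by at most $\gamma_1$, and the retained set has size $\lesssim s(\log p/n)^{-q/2}$. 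Combining, $L(\alpha^{(i)},\alpha)^2 \lesssim \rho\, L(\beta^{(i-1)},\beta)^2 + s(\log p/n)^{1-q/2}+\xi_\Omega^2$ for some $\rho<1$, and symmetrically for $\beta^{(i)}$; iterating $K=O(1)$ times drives the contractible part below the rate term. The main obstacle is the one-step analysis: carefully tracking how thresholding interacts with the support of the current sparse iterate and the noise, so that the bias from killing small true coordinates and the variance from retaining an $O(s(\log p/n)^{-q/2})$-sized support both come out at exactly $s(\log p/n)^{1-q/2}$, uniformly over the iteration — and ensuring the ``rank-one leakage'' from $\hat A$'s second singular value (noted in the first remark) is absorbed into $E$ rather than causing drift. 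Everything else is bookkeeping with the concentration bounds and the elementary geometry of $|\sin\angle|$.
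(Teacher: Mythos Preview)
Your overall architecture---condition on the second half of the data, separate the nuisance bias $\|\alpha-\theta\|\vee\|\beta-\eta\|$ via a triangle inequality for $L$, and run a two-stage (initialization + contraction) analysis targeting $(\alpha,\beta)$---matches the paper, and the initialization sketch is broadly correct.

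The gap is in Stage two, specifically the claim that the error term $E\beta^{(i-1)}$ (with $E=\hat A-\lambda\alpha\beta^T$) has sup-norm $\lesssim\sqrt{\log p/n}$. Because $\beta^{(i-1)}$ is \emph{data-dependent} (it is a function of $\hat A$ through the previous iteration), you cannot apply a fixed-vector concentration bound to $E_j^T\beta^{(i-1)}$. Your proposed fix---uniform bounds over all $k$-sparse unit vectors via a $\binom{p}{k}$ union bound---does not deliver $\sqrt{\log p/n}$: the union over supports and an $\varepsilon$-net costs an extra $k\log p$ in the exponent, yielding only $\max_j|E_j^T v|\lesssim\sqrt{k\log p/n}$ uniformly over $k$-sparse $v$. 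With $k\asymp s(\log p/n)^{-q/2}$ this is $\sqrt{s(\log p/n)^{1-q/2}}$, which under Assumption~A is far larger than $\sqrt{\log p/n}$. Consequently thresholding at $\gamma_1\asymp\sqrt{\log p/n}$ would \emph{not} be guaranteed to kill the weak-signal coordinates, and the contraction step fails.

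The paper circumvents this with an \emph{oracle-sequence} construction (in the spirit of \cite{ma2013sparse}). One fixes deterministic strong-signal sets $H_1,H_2$ (defined from the true $\alpha,\beta$), sets $\hat A^{ora}$ equal to $\hat A$ on the $H_1\times H_2$ block and zero elsewhere, and runs the same algorithm on $\hat A^{ora}$ to produce an oracle sequence $(\alpha^{(k),ora},\beta^{(k),ora})$. The proof then has three pieces: (i) the leading singular vectors $(\hat\alpha^{ora},\hat\beta^{ora})$ of $\hat A^{ora}$ are close to $(\alpha,\beta)$ (Wedin plus an operator-norm bound on the \emph{fixed} block $\hat A_{H_1H_2}-A_{H_1H_2}$); (ii) the oracle sequence converges to $(\hat\alpha^{ora},\hat\beta^{ora})$ in $O(1)$ steps, with contraction factor $|\hat l_2/\hat l_1|^2$ where $\hat l_1,\hat l_2$ are the top two singular values of $\hat A_{H_1H_2}$ (and $\hat l_2$ is small because the block is close to rank one); (iii) the actual sequence coincides with the oracle sequence. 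Step (iii) is where the data-dependence issue is resolved: one must show $\|\hat A_{L_1H_2}\beta^{(k-1),ora}\|_\infty\le\gamma_1$, and the key observation is that $\beta^{(k-1),ora}$ depends only on $\hat A_{H_1H_2}$, hence (via the latent-factor representation $\tilde X_i=\sqrt\lambda\,\alpha Z_i+X_i'$ with independent $Z_i,X_i',Y_i'$) is \emph{conditionally independent} of the noise rows $X'_{L_1}$. Conditioning on $\hat A_{H_1H_2}$, each coordinate of $\hat A_{L_1H_2}\beta^{(k-1),ora}$ becomes a standard Gaussian up to scale, so a $\sqrt{\log p/n}$ sup-bound follows from a union over $|L_1|\le p_1$ coordinates only---no union over supports is needed. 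This conditional-independence device is the missing idea in your proposal; without it the thresholding level cannot be justified at the stated order.
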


\begin{remark}
Notice the thresholding levels depend on some unknown constants $%
(t_{ij},c_{1},c_{2})$. This is for the simplicity of presentation. A more
involved fully data-driven choice of thresholding levels are presented in
Section \ref{sec:threshconst} along with the proof.
\end{remark}

The upper bound in Theorem \ref{thm:main} implies that the estimation of
nuissance parameters $\hat{\Omega}_{i}$ affect the estimation canonical
directions in terms of $||(\hat{\Omega}_{1}\Sigma _{1}-I)\theta ||^{2}$ and $%
||(\hat{\Omega}_{2}\Sigma _{2}-I)\eta ||^{2}$. In Section \ref%
{sec::estimation of precision}, we discussed four different settings in
which certain structure assumptions are imposed on the nuance parameters $%
\Omega _{1}$ and $\Omega _{2}$. In the literature, optimal rates of
convergence in estimating ${\Omega}_{i}$ under spectral norm have been
established and can be applied here in each of the four settings, noting
that $||(\hat{\Omega}_{1}\Sigma _{1}-I)\theta ||^{2}\leq ||(\hat{\Omega}%
_{1}-\Omega _{1})||^{2}\left\Vert \Sigma _{1}\theta \right\Vert ^{2}\leq W||(%
\hat{\Omega}_{1}-\Omega _{1})||^{2}$. Due to the limited space, we only
discuss one setting in which we assume sparse precision matrix structure on $%
\Omega _{i}$.

Besides the first general condition in Assumption B, we assume each
row/column of $\Omega _{i}$ is in a weak $l_{q_{0}}$ ball with $0\leq
q_{0}\leq 1$. i.e. $\Omega _{i}\in \mathcal{G}_{q_{0}}\left(
s_{0},p_{i}\right) $ for $i=1,2$, where 
\begin{equation*}
\mathcal{G}_{q_{0}}\left( s_{0},p\right) =\left\{ \Omega =\left( \omega
_{ij}\right) _{p\times p}:\max_{j}\left\vert \omega _{j(k)}\right\vert
^{q_{0}}\leq s_{0}k^{-1}\mbox{\rm  for all }k\right\} ,
\end{equation*}%
and the matrix $l_{1}$ norm of $\Omega _{i}$ is bounded by some constant $%
\left\Vert \Omega _{i}\right\Vert _{l_{1}}\leq w^{-1}$. The notation $\omega
_{j(k)}$ means the $k$-th largest coordinate of $j$-th row of $\Omega $ in
magnitude. Recall that $p=p_{1}\vee p_{2}$. Under the assumptions that $%
s_{0}^{2}=O\left( \left( n/\log p\right) ^{1-q_{0}}\right) $, Theorem 2 in 
\cite{cai11} implies that CLIME estimator with an appropriate tuning
parameter attaining the following rate of convergence $||(\hat{\Omega}%
_{1}\Sigma _{1}-I)\theta ||^{2}$ with probability at least $1-O\left(
p^{-2}\right) ,$ 
\begin{equation*}
||(\hat{\Omega}_{1}\Sigma _{1}-I)\theta ||^{2}\leq W||(\hat{\Omega}%
_{1}-\Omega _{1})||^{2}\leq Cs_{0}^{2}\left( \frac{\log p}{n}\right)
^{1-q_{0}}\mbox{\rm .}
\end{equation*}%
Therefore we obtain the following corollary.

\begin{corollary}
\label{coro:Sparse Precision} Assume the Assumptions A and B holds, $\Omega
_{i}\in \mathcal{G}_{q_{0}}\left( s_{0},p_{i}\right) $ $i=1,2$, $\left\Vert
\Omega _{i}\right\Vert _{l_{1}}\leq w^{-1}$ and $s_{0}^{2}=O\left( \left(
n/\log p\right) ^{1-q_{0}}\right) $. Let $(\alpha ^{(k)},\beta ^{(k)})$ be
the sequence from Algorithm \ref{alg:: ITSCCA}, with the initializer $%
(\alpha ^{(0)},\beta ^{(0)})$ calculated by Algorithm \ref{alg:: CTSCCA} and 
$\hat{\Omega}_{i}$ obtained by applying CLIME procedure in \cite{cai11}. The
thresholding levels are the same as those in Theorem \ref{thm:main}. Then
with probability at least $1-O(p^{-2})$, we have 
\begin{equation*}
L(\alpha ^{(k)},\theta )^{2}\vee L(\beta ^{(k)},\eta )^{2}\leq C\left( s\Big(%
\frac{\log p}{n}\Big)^{1-q/2}+s_{0}^{2}\Big(\frac{\log p}{n}\Big)%
^{1-q_{0}}\right) ,
\end{equation*}%
for all $k=1,2,...,K$ with $K=O(1)$ and some constant $C>0$.
\end{corollary}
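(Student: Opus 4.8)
The plan is to obtain Corollary \ref{coro:Sparse Precision} as an immediate consequence of Theorem \ref{thm:main}: the only thing to add is a control of the nuisance terms $\|(\hat{\Omega}_1\Sigma_1-I)\theta\|^2$ and $\|(\hat{\Omega}_2\Sigma_2-I)\eta\|^2$ that holds under the sparse precision assumption $\Omega_i\in\mathcal{G}_{q_0}(s_0,p_i)$, after which the asserted bound follows by substitution.

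First I would invoke the CLIME operator-norm guarantee. Since $\hat{\Omega}_i$ is built from the second half of the sample, which is independent of the first half used to form $\hat{\Sigma}_{12}$, Theorem 2 of \cite{cai11} applies conditionally on that second half (hence marginally): under $\Omega_i\in\mathcal{G}_{q_0}(s_0,p_i)$, $\|\Omega_i\|_{l_1}\le w^{-1}$, $\lambda_{\min}(\Sigma_i)\ge w$, and the scaling $s_0^2=O\big((n/\log p)^{1-q_0}\big)$, choosing the CLIME tuning parameter at the level $C'\sqrt{\log p/n}$ with $C'$ large enough yields $\|\hat{\Omega}_i-\Omega_i\|\le C s_0(\log p/n)^{(1-q_0)/2}$ with probability at least $1-O(p^{-2})$ for $i=1,2$. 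Because $\|\hat{\Omega}_i\Sigma_i-I\|=\|(\hat{\Omega}_i-\Omega_i)\Sigma_i\|\le W\|\hat{\Omega}_i-\Omega_i\|$, this also gives $\xi_\Omega=o(1)$ on that event, so the consistency requirement in part 3 of Assumption B is met and Theorem \ref{thm:main} is applicable.

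Next I would convert the operator-norm bound into the exact quantities appearing in Theorem \ref{thm:main}. Writing $(\hat{\Omega}_1\Sigma_1-I)\theta=(\hat{\Omega}_1-\Omega_1)\Sigma_1\theta$ and using $\|\Sigma_1\theta\|^2=\theta^T\Sigma_1^2\theta\le\lambda_{\max}(\Sigma_1)\,\theta^T\Sigma_1\theta\le W$ (which holds since $\Sigma_1^2\preceq\lambda_{\max}(\Sigma_1)\Sigma_1$ and $\theta^T\Sigma_1\theta=1$), I get $\|(\hat{\Omega}_1\Sigma_1-I)\theta\|^2\le W\|\hat{\Omega}_1-\Omega_1\|^2\le C s_0^2(\log p/n)^{1-q_0}$, and symmetrically $\|(\hat{\Omega}_2\Sigma_2-I)\eta\|^2\le C s_0^2(\log p/n)^{1-q_0}$. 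Finally I would intersect the event of Theorem \ref{thm:main} (probability $\ge1-O(p^{-2})$) with the two CLIME events; the intersection still has probability $\ge1-O(p^{-2})$, and on it the bound of Theorem \ref{thm:main} with the nuisance terms replaced by $Cs_0^2(\log p/n)^{1-q_0}$ gives $L(\alpha^{(k)},\theta)^2\vee L(\beta^{(k)},\eta)^2\le C\big(s(\log p/n)^{1-q/2}+s_0^2(\log p/n)^{1-q_0}\big)$ for all $k\le K$ with $K=O(1)$, as claimed.

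Since this is a corollary there is essentially no hard step; the only points needing care are bookkeeping ones. One is the probability accounting: quoting the CLIME bound with a $p^{-2}$ tail requires taking its tuning constant sufficiently large, which only inflates the leading constant $C$, and one must keep in mind that the data-splitting keeps the distribution of $\hat{\Sigma}_{12}$ untouched so the Theorem \ref{thm:main} event and the CLIME events can be combined. The other is to confirm that the scaling $s_0^2=O\big((n/\log p)^{1-q_0}\big)$ is precisely what both Theorem 2 of \cite{cai11} and the consistency clause of Assumption B demand (so that $\xi_\Omega$ is, up to the usual $o$-versus-$O$ convention, genuinely $o(1)$).
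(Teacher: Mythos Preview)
Your proposal is correct and follows essentially the same route as the paper: the paper derives the corollary in one line by bounding $\|(\hat{\Omega}_1\Sigma_1-I)\theta\|^2\le W\|\hat{\Omega}_1-\Omega_1\|^2$ and then invoking Theorem~2 of \cite{cai11} to get $\|\hat{\Omega}_1-\Omega_1\|^2\le Cs_0^2(\log p/n)^{1-q_0}$ with probability $1-O(p^{-2})$, after which substitution into Theorem~\ref{thm:main} gives the result. Your extra bookkeeping (verifying $\|\Sigma_1\theta\|^2\le W$, checking that $\xi_\Omega=o(1)$ so Assumption~B.3 is met, and the union bound on events) is sound and more careful than the paper itself, and your flag about the $O$-versus-$o$ convention for $\xi_\Omega$ is a legitimate observation about a minor looseness in the paper's stated hypotheses.
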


\begin{remark}
It can be seen from the analysis that similar upper bounds hold in Corollary %
\ref{coro:Sparse Precision} with probability $1-O(p^{-h})$ by picking
different thresholding constants in Algorithms \ref{alg:: ITSCCA}, \ref%
{alg:: CTSCCA} and CLIME procedure for any $h>0$. Assuming that $n=o(p^{h}),$
the boundedness of loss function implies that Corollary \ref{coro:Sparse
Precision} is valid in the risk sense.
\end{remark}

\subsection{Minimax Lower Bound}

\label{sec:lowerbound}

\label{subsec:lower}

In this section, we establish a minimax lower bound in a simpler setting in
which we know the covariance matrices $\Sigma _{1}$ and $\Sigma _{2}$. We
assume $\Sigma _{i}=I_{p_{i}\times p_{i}}$ for $i=1,2$ for simplicity.
Otherwise, we can transfer the data accordingly and make $\Sigma
_{i}=I_{p_{i}\times p_{i}}$. The purpose of establishing this minimax lower
bound is to measure the difficulty of estimation problems in sparse CCA
model. In view of the upper bound given in Theorem \ref{thm:main} by the
iterative thresholding procedure Algorithms \ref{alg:: ITSCCA} and \ref%
{alg:: CTSCCA}, this lower bound is minimax rate optimal under conditions
that estimating nuisance precision matrices is not harder than estimating
the canonical direction pair. Consequently, assuming some general structures
on the nuance parameters $\Sigma _{1}$ and $\Sigma _{2}$, we establish the
minimax rates of convergence for estimating the canonical directions.

Before proceeding to the precise statements, we introduce the parameter
space of $\left( \theta ,\eta ,\lambda \right) $ in this simpler setting.
Define 
\begin{equation}
\mathcal{F}_{q}^{p_{1},p_{2}}\left( s_{1},s_{2},C_{\lambda }\right) =\left\{ 
\begin{array}{c}
N\left( 0,\Sigma \right) :\Sigma \mbox{ is specified in
(\ref{eq:model}) },\lambda \in (C_{\lambda },1) \\ 
\Sigma _{i}=I_{p_{i}\times p_{i}}\mbox{\rm , }i=1,2, \\ 
\left\vert \theta \right\vert _{\left( k\right) }^{q}\leq s_{1}k^{-1}%
\mbox{\rm ,
}\left\vert \eta \right\vert _{\left( k\right) }^{q}\leq s_{2}k^{-1}%
\mbox{\rm ,}\mbox{\rm  for all }k.%
\end{array}%
\right\} .  \label{Parameter Space}
\end{equation}%
In the sparsity class (\ref{Parameter Space}), the covariance matrices $%
\Sigma _{i}=I_{p_{i}\times p_{i}}$ for $i=1,2$ are known and unit vectors $%
\theta ,\eta $ are in the weak $l_{q}$ ball, with $0\leq q\leq 2$. We allow
the dimensions of two random vectors $p_{1}$ and $p_{2}$ to be very
different and only require that $\log p_{1}$ and $\log p_{2}$ are comparable
with each other, 
\begin{equation}
\log p_{1}\asymp \log p_{2}.  \label{Assumption on log dimension}
\end{equation}%
Remember $s=s_{1}\vee s_{2}$ and $p=p_{1}\vee p_{2}$.

\begin{thm}
\label{thm:lower} For any $q\in \left[ 0,2\right] ,$ we assume that $%
s_{i}\left( \frac{n}{\log p_{i}}\right) ^{q/2}=o(p_{i})$ for $i=1,2$ and (%
\ref{Assumption on log dimension}) holds. Moreover, we also assume $s\left( 
\frac{\log p}{n}\right) ^{1-\frac{q}{2}}\leq c_{0}$, for some constant $%
c_{0}>0$. Then we have 
\begin{equation*}
\inf_{(\hat{\theta},\hat{\eta})}\sup_{P\in \mathcal{F}}\mathbb{E}_{P}\left(
L^{2}(\hat{\theta},\theta )\vee L^{2}(\hat{\eta},\eta )\right) \geq Cs\left( 
\frac{\log p}{n}\right) ^{1-q/2},
\end{equation*}%
where $\mathcal{F}=\mathcal{F}_{q}^{p_{1},p_{2}}\left(
s_{1},s_{2},C_{\lambda }\right) $ and $C$ is a constant only depending on $q$
and $C_{\lambda }$.
\end{thm}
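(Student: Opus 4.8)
The plan is to use Fano's lemma (or more precisely, a local packing argument combined with the KL-divergence bound for Gaussians). First I would reduce the problem: since both $\theta$ and $\eta$ appear symmetrically in the loss $L^2(\hat\theta,\theta)\vee L^2(\hat\eta,\eta)$, it suffices to lower-bound $\mathbb{E}_P L^2(\hat\theta,\theta)$ over a sub-family in which $\eta$ is held fixed at a canonical sparse unit vector and $\lambda$ is fixed at some value in $(C_\lambda,1)$, say $\lambda = (1+C_\lambda)/2$. So I would construct a finite family $\{P_\theta : \theta\in\Theta\}\subset\mathcal{F}$ of Gaussian models with $\Sigma_i = I$, $\Sigma_{12}=\lambda\,\theta\eta^T$, indexed by a well-separated set $\Theta$ of sparse unit vectors.

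The key construction is a packing set in the weak-$\ell_q$ ball. I would distinguish two regimes as is standard in sparse minimax problems. In the "dense/nearly nonsparse" regime one takes $\Theta$ to be perturbations of a fixed sparse vector $\theta_0$ supported on a fixed set of size $\asymp s(n/\log p)^{q/2}$, with the perturbations living in a Varshamov–Gilbert packing of the Hamming cube scaled by $\varepsilon \asymp \sqrt{(\log p)/n}$ on each coordinate; this yields $\log|\Theta| \gtrsim s(n/\log p)^{q/2}$ hypotheses, pairwise loss separation $\gtrsim s(\log p/n)^{1-q/2}$, and — crucially — the condition $s(n/\log p_i)^{q/2} = o(p_i)$ guarantees there is enough room in $\{1,\dots,p_i\}$ to embed such supports. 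In the very sparse regime ($q$ near $0$, or $s$ small) one instead uses a two-point or $p$-point argument varying the support location of a single spike of magnitude $\asymp\sqrt{\log p/n}$, giving $\log|\Theta|\asymp\log p$ and separation $\asymp \log p/n$; the condition $s(\log p/n)^{1-q/2}\le c_0$ keeps all these vectors genuinely inside the weak-$\ell_q$ ball with unit norm after renormalization (the renormalization only changes constants). Either way the lower rate $s(\log p/n)^{1-q/2}$ emerges.

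Next I would bound the KL divergence between any two models $P_\theta, P_{\theta'}$ in the family. Since these are centered Gaussians with the same marginals $I_{p_1}, I_{p_2}$ and cross-covariance $\lambda\theta\eta^T$ versus $\lambda\theta'\eta^T$, the joint covariance matrices differ only in a rank-$\le 2$ block, and a direct computation (using $\|\eta\|=1$, $\lambda$ bounded away from $1$, and $\|\theta-\theta'\|$ small) gives $\mathrm{KL}(P_\theta^{\otimes 2n}\,\|\,P_{\theta'}^{\otimes 2n}) \le C n \|\theta-\theta'\|^2$. In the dense regime $\|\theta-\theta'\|^2 \lesssim s(n/\log p)^{q/2}\cdot (\log p/n) = s(\log p/n)^{1-q/2}(n/\log p)^{q/2}\cdot\ldots$; one checks the average pairwise KL is a small constant fraction of $\log|\Theta|$, so Fano applies. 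In the sparse regime the single-spike KL is $\lesssim n\cdot(\log p/n) = \log p \ll \log|\Theta|=\log p$ up to constants — here one must be slightly careful and instead use Fano with the average over the $p$ single-spike alternatives versus a null, where the averaged likelihood ratio has $\chi^2$ controlled by $p^{-1}\cdot p \cdot (\text{small})$; this is the classical argument of \cite{johnstone09}/\cite{cai12b} for sparse PCA and transfers verbatim.

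The main obstacle I anticipate is not any single step but the bookkeeping of the two regimes and, within the dense regime, verifying simultaneously that (i) the packing vectors remain in the weak-$\ell_q$ ball with the prescribed constant $s$ after imposing unit norm, (ii) the support sizes fit inside $\{1,\dots,p_i\}$ (this is exactly where $s_i(n/\log p_i)^{q/2}=o(p_i)$ is used), and (iii) the KL budget $\le c\log|\Theta|$ holds with the Fano constant $c<1/16$ or so — which is where $s(\log p/n)^{1-q/2}\le c_0$ with $c_0$ small enough is consumed. Tuning the perturbation magnitude $\varepsilon$ and the support size against these three constraints is the delicate part; everything else (the KL computation for the Gaussians, Varshamov–Gilbert, Fano) is routine. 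The condition $\log p_1\asymp\log p_2$ is used only to state the bound in terms of the single quantity $p = p_1\vee p_2$ rather than treating the two blocks with different rates.
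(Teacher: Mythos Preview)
Your proposal is essentially correct and follows the same route as the paper: fix $\eta$ and $\lambda$, build a Varshamov--Gilbert packing of sparse unit vectors $\theta$ with support size $d\asymp s(n/\log p)^{q/2}$ and per-coordinate magnitude $\asymp\sqrt{(\log p)/n}$, bound the Gaussian KL by $Cn\|\theta-\theta'\|^2$, and apply Fano. The paper carries this out with the explicit parametrization $\theta_i=\big((1-\epsilon^2)^{1/2},\,\phi_i\,\epsilon d^{-1/2}\big)$, $\epsilon=c_1(s_1-1)^{1/2}(\log p_1/n)^{1/2-q/4}$, $d=(s_1-1)(n/\log p_1)^{q/2}$, which is exactly your ``dense regime.''

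One comment: your two-regime split is unnecessary. The single Varshamov--Gilbert construction already covers all $q\in[0,2]$ uniformly, including $q=0$; there is no separate ``sparse regime'' needed, and indeed your single-spike argument as stated would only deliver the rate $\log p/n$, not $s\,\log p/n$, so it would not suffice on its own for general $s$. The paper also computes the KL more explicitly than you sketch: it writes $\Sigma_i$ in the spectral form $I+\tfrac{\lambda}{2}(\theta_i,\eta_i)(\theta_i,\eta_i)^T-\tfrac{\lambda}{2}(\theta_i,-\eta_i)(\theta_i,-\eta_i)^T$, notes all $\Sigma_i$ share the same eigenvalues so $\log\det(\Sigma_2^{-1}\Sigma_1)=0$, and reduces $K(\mathbb P_1,\mathbb P_2)$ to $\tfrac{n\lambda^2}{1-\lambda^2}\big(1-(\theta_1^T\theta_2)(\eta_1^T\eta_2)\big)\le \tfrac{n\lambda^2}{2(1-\lambda^2)}\big(\|\theta_1-\theta_2\|^2+\|\eta_1-\eta_2\|^2\big)$; with $\eta$ fixed this is exactly your $Cn\|\theta-\theta'\|^2$.
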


Theorem \ref{thm:lower} implies the minimaxity for the sparse CCA problem
when the covariance matrices $\Sigma _{1}$ and $\Sigma _{2}$ are unknown.
The lower bound directly follows from Theorem \ref{thm:lower} and the upper
bound follows from Theorem \ref{thm:main}. Define the parameter space 
\begin{equation*}
\mathcal{F}_{q,q_{0}}^{p_{1},p_{2}}\left( s_{0},s_{1},s_{2},C_{\lambda
},w,W\right) =\left\{ 
\begin{array}{c}
N\left( 0,\Sigma \right) :\Sigma \mbox{ is specified in
(\ref{eq:model}) },\lambda \in (C_{\lambda },1), \\ 
\Sigma _{i}^{-1}\in \mathcal{G}_{q_{0}}\left( s_{0},p_{i}\right) ,W^{-1}\leq
\lambda _{\min }(\Sigma _{i}^{-1}),\left\Vert \Sigma _{i}^{-1}\right\Vert
_{l_{1}}\leq w^{-1}, \\ 
\left\vert \theta \right\vert _{\left( k\right) }\leq s_{1}k^{-1}%
\mbox{\rm ,
}\left\vert \eta \right\vert _{\left( k\right) }\leq s_{2}k^{-1}%
\mbox{\rm
for all }k.%
\end{array}%
\right\} .
\end{equation*}%
Since $\mathcal{F}_{q}^{p_{1},p_{2}}\left( s_{1},s_{2},C_{\lambda }\right)
\subset \mathcal{F}_{q,q_{0}}^{p_{1},p_{2}}\left(
s_{0},s_{1},s_{2},C_{\lambda },w,W\right) $, the lower bound for the smaller
space holds for the larger one. Combining the Corollary \ref{coro:Sparse
Precision} and the minimax lower bound in Theorem \ref{thm:lower}, we obtain
that the minimax rate of convergence of estimating canonical directions over
parameter spaces $\mathcal{F}_{q,q_{0}}^{p_{1},p_{2}}\left(
s_{0},s_{1},s_{2},C_{\lambda },w,W\right) $.

\begin{corollary}
Under the assumptions in Corollary \ref{coro:Sparse Precision} and Theorem %
\ref{thm:lower} and assume $n=o(p^h)$ for some $h>0$, we have 
\begin{equation*}
\inf_{(\hat{\theta},\hat{\eta})}\sup_{P\in \mathcal{F}}\mathbb{E}_P\Bigg(L(%
\hat{\theta},\theta)^2\vee L(\hat{\eta},\eta)^2\Bigg)\asymp s\Bigg(\frac{%
\log p}{n}\Bigg)^{1-q/2},
\end{equation*}
for $\mathcal{F}=\mathcal{F}_{q,q_{0}}^{p_{1},p_{2}}\left(
s_{0},s_{1},s_{2},C_{\lambda },w,W\right)$, provided that $s_0^2\Bigg(\frac{%
\log p}{n}\Bigg)^{1-q_0}\leq Cs\Bigg(\frac{\log p}{n}\Bigg)^{1-q/2}$ for
some constant $C>0$.
\end{corollary}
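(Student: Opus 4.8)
The plan is to obtain the claimed sharp rate by matching the upper bound delivered by the CAPIT estimator of Corollary~\ref{coro:Sparse Precision} against the minimax lower bound of Theorem~\ref{thm:lower}. Since all hypotheses of both results are imposed in the statement, essentially no new work is needed beyond combining them correctly and taking some care with the passage from high probability to expectation.

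First I would record the upper bound. Corollary~\ref{coro:Sparse Precision}, invoked with the thresholding levels of Theorem~\ref{thm:main} and with $\hat{\Omega}_i$ given by CLIME, asserts that on an event of probability at least $1-O(p^{-2})$ the estimator $(\alpha^{(k)},\beta^{(k)})$ satisfies $L(\alpha^{(k)},\theta)^2\vee L(\beta^{(k)},\eta)^2\le C\bigl(s(\log p/n)^{1-q/2}+s_0^2(\log p/n)^{1-q_0}\bigr)$; the side hypothesis $s_0^2(\log p/n)^{1-q_0}\le C s(\log p/n)^{1-q/2}$ then collapses the right-hand side to a constant multiple of $s(\log p/n)^{1-q/2}$. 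To convert this into a bound on the risk $\mathbb{E}_P$ I would appeal to the remark following Corollary~\ref{coro:Sparse Precision}: rerunning the argument with larger thresholding constants lowers the failure probability to $O(p^{-h})$ for any prescribed $h$, and since $L^2\le 2$ is uniformly bounded and $n=o(p^h)$, the exceptional event contributes only a term of order $p^{-h}$, negligible against $s(\log p/n)^{1-q/2}$ once $h$ is chosen large enough. Because $L$ depends only on the directions of its arguments, normalizing $(\alpha^{(k)},\beta^{(k)})$ is harmless, so one concludes
\[
\inf_{(\hat{\theta},\hat{\eta})}\sup_{P\in\mathcal{F}}\mathbb{E}_P\bigl(L^2(\hat{\theta},\theta)\vee L^2(\hat{\eta},\eta)\bigr)\le C s(\log p/n)^{1-q/2},\qquad \mathcal{F}=\mathcal{F}_{q,q_0}^{p_1,p_2}(s_0,s_1,s_2,C_\lambda,w,W).
\]

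Next I would handle the lower bound, which is immediate from the inclusion $\mathcal{F}_q^{p_1,p_2}(s_1,s_2,C_\lambda)\subset\mathcal{F}_{q,q_0}^{p_1,p_2}(s_0,s_1,s_2,C_\lambda,w,W)$ already noted in the text preceding the corollary: any distribution in the smaller class has $\Sigma_i=I_{p_i\times p_i}$, hence $\Sigma_i^{-1}=I\in\mathcal{G}_{q_0}(s_0,p_i)$, $\|\Sigma_i^{-1}\|_{l_1}=1\le w^{-1}$ and $\lambda_{\min}(\Sigma_i^{-1})=1\ge W^{-1}$ under the usual normalizations $s_0\ge 1$ and $w\le 1\le W$, so it lies in the larger class. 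Since enlarging the parameter set can only increase the minimax risk, Theorem~\ref{thm:lower} (whose hypotheses are among those assumed here) gives
\[
\inf_{(\hat{\theta},\hat{\eta})}\sup_{P\in\mathcal{F}}\mathbb{E}_P\bigl(L^2(\hat{\theta},\theta)\vee L^2(\hat{\eta},\eta)\bigr)\ge \inf_{(\hat{\theta},\hat{\eta})}\sup_{P\in\mathcal{F}_q^{p_1,p_2}}\mathbb{E}_P\bigl(L^2(\hat{\theta},\theta)\vee L^2(\hat{\eta},\eta)\bigr)\ge C s(\log p/n)^{1-q/2}.
\]
Combining the two displays yields the asserted $\asymp$.

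I expect the only step requiring real care — rather than a genuine obstacle — to be the passage from the high-probability bound of Corollary~\ref{coro:Sparse Precision} to a bound in expectation: one must check that the exceptional event is of strictly smaller order than the target rate, which pins down the choice of the failure-probability exponent $h$ relative to the exponent implicit in $n=o(p^h)$, and one must observe that enlarging $h$ only inflates the thresholding constants without changing the form of the bound. Everything else — verifying that the hypotheses of Corollary~\ref{coro:Sparse Precision} and of Theorem~\ref{thm:lower} are simultaneously in force under the stated assumptions, and the set inclusion used for the lower bound — is routine.
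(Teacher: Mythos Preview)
Your proposal is correct and matches the paper's own argument essentially line for line: the paper also derives the corollary by combining the upper bound of Corollary~\ref{coro:Sparse Precision} (collapsed via the side condition on $s_0$) with the lower bound of Theorem~\ref{thm:lower} transferred through the inclusion $\mathcal{F}_q^{p_1,p_2}\subset\mathcal{F}_{q,q_0}^{p_1,p_2}$, and handles the high-probability-to-expectation passage exactly via the remark you cite about boosting the exponent to $p^{-h}$ and using boundedness of $L^2$.
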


\section{Simulation Studies}

\label{sec:simu}

We present simulation results of our proposed method in this section. In the
first scenario, we assume the covariance structure is sparse, and in the
second scenario, we assume the precision structure is sparse. Comments on
both scenarios are addressed at the end of the section.

\subsection{Scenario I: Sparse Covariance Matrix}

In the first scenario, we consider covariance matrices $\Sigma_1$ and $%
\Sigma_2$ are sparse. More specifically, the covariance matrix $\Sigma_1 =
\Sigma_2 = (\sigma_{ij})_{1 \leq i, j \leq p} $ takes the form 
\begin{equation*}
\sigma_{ij} = \rho^{|i-j|} \quad \mbox{\rm with} \quad \rho = 0.3 .
\end{equation*}
The canonical pair $(\theta,\eta)$ is generated by normalizing a vector
taking the same value at the coordinates $(1,6,11,16,21)$ and zero elsewhere
such that $\theta^T \Sigma_1 \theta = 1$ and $\eta^T \Sigma_2 \eta = 1$. The
canonical correlation $\lambda$ is taken as $0.9$. We generate the $2n
\times p$ data matrices $X$ and $Y$ jointly from (\ref{eq:model}). As
described in the methodology section, we split the data into two halves. In
the first step, we estimate the precision matrices $\Omega _{1}$ and $\Omega
_{2}$ using the first half of the data. Note that this covariance matrix has
a Toeplitz structure. We estimate the covariance matrix under three
different assumptions: 1) we assume that the Toeplitz structure is known and
estimate $\hat{\Sigma}_{1}$ and $\hat{\Sigma}_{2}$ by the method proposed in 
\cite{CRZ12} (denoted as CAPIT+Toep); 2) we assume that it is known that
covariance $\sigma _{ij}$ decay as they move away from the diagonal and
estimate $\hat{\Sigma}_{1}$ and $\hat{\Sigma}_{2}$ by the tapering procedure
proposed in \cite{CZZH10} (denoted as CAPIT+Tap); 3) we assume only the
sparse structure is known and estimate $\hat{\Sigma}_{1}$ and $\hat{\Sigma}%
_{2}$ by hard thresholding \citep{BL08B} (denoted as CAPIT+Thresh). In the
end the estimators $\hat{\Omega}_{i}$ is given by $\hat{\Omega}_{i}=\hat{%
\Sigma}_{i}^{-1}$ for $i=1,2$.

To select the tuning parameters for different procedures, we further split
the first part of the data into a $2:1$ training set and tuning set. We select
the tuning parameters by minimizing the distance of estimated covariance
from the training set and sample covariance matrix of the tuning set in term
of the Frobenius norm. More specifically, the tuning parameters $k_{1}$ in
the Toeplitz method and $k_{2}$ in the Tapering method are selected through
a screening on numbers in the interval of $(1,p)$. The tuning parameter 
$\lambda $ in the Thresholding method is selected through a screening on 50
numbers in the interval of $[0.01,0.5]$.

After obtaining estimator $\hat{\Omega}_{1}$ and $\hat{\Omega}_{2}$, we
perform Algorithms \ref{alg:: ITSCCA} and \ref{alg:: CTSCCA} by using $\hat{%
\Sigma}_{12}$ estimated from the second half of the data. The thresholding
parameters $\gamma _{1}$ and $\gamma _{2}$ are set to be $2.5\sqrt{\frac{%
\log p}{n}}$ for the Tapering and Thresholding methods, while the
thresholding parameter $t_{ij}$ is set to be $2.5$ for all $(i,j)$. For the
Toeplitz method, the thresholding parameters $\gamma _{1}=\gamma _{2}=2\sqrt{%
\frac{\log p}{n}}$ while parameter $t_{ij}=2$ for all $(i,j)$. The resulted
estimator is denoted as $(\hat{\theta}_{[1]},\hat{\eta}_{[1]})$.

Then we swap the data, repeat the above procedures and obtain $(\hat{\theta}%
_{[2]},\hat{\eta}_{[2]})$. The final estimator $(\hat{\theta}, \hat{\eta})$
is the average of $(\hat{\theta}_{[1]},\hat{\eta}_{[1]})$ and $(\hat{\theta}%
_{[2]},\hat{\eta}_{[2]})$.

We compare our method with penalized matrix decomposition proposed by \cite%
{witten09} (denoted as PMD) and the vanilla singular vector decomposition
method for CCA (denoted as SVD). For PMD, we use the R function implemented
by the authors \citep{witten2013package}, which performs sparse CCA by $l_1$%
-penalized matrix decomposition and selects the tuning parameters using a
permutation scheme.

We evaluate the performance of different methods by the loss function $L(%
\hat{\theta},\theta)\vee L(\hat{\eta},\eta)$. The results from $100$
independent replicates are summarized in Table 1.

\begin{table}  
	\caption{Scenario I: Sparse covariance matrix. Estimation errors for $(\protect\theta, \protect\eta)$ as measured by $L(\hat{\protect\theta},\protect\theta)\vee L(\hat{\protect\eta},\protect\eta)$ based on the median
of 100 replications. Numbers in parentheses are the simulation median
absolute deviations.} 
\centering
\fbox{ 
\begin{tabular}{ccccccc}
$p_1 = p_2$ & $n$ & CAPIT+Toep & CAPIT+Tap & CAPIT+Thresh & PMD & SVD \\ \hline
200 & 750 & 0.11(0.03) & 0.12(0.06) & 0.11(0.03) & 0.16(0.03) & 0.32(0.01) \\ 
300 & 750 & 0.11(0.03) & 0.13(0.07) & 0.11(0.03) & 0.36(0.02) & 0.44(0.01) \\ 
200 & 1000 & 0.1(0.02) & 0.1(0.05) & 0.09(0.03) & 0.14(0.02) & 0.27(0.01) \\ 
500 & 1000 & 0.09(0.03) & 0.09(0.04) & 0.1(0.02) & 0.11(0.03) & 0.53(0.02) \\ \hline
\end{tabular}
}
\label{sparse_covariance}
\end{table}

\subsection{Scenario II: Sparse Precision Matrix}

\label{sec:sce2}

In the second scenario, we consider that the precision matrices $\Omega _{1}$
and $\Omega _{2}$ are sparse. In particular, $\Omega _{1}=\Omega
_{2}=(\omega _{ij})_{1\leq i,j\leq p}$ take the form: 
\begin{equation*}
\omega _{ij}=%
\begin{cases}
1 & \mbox{if }i=j \\ 
0.5 & \mbox{if }|i-j|=1 \\ 
0.4 & \mbox{if }|i-j|=2 \\ 
0 & \mbox{otherwise}.%
\end{cases}%
\end{equation*}
The canonical pair $(\theta,\eta)$ is the same as described in Scenario I.
We generate the $2n \times p$ data matrices $X$ and $Y$ jointly from (\ref%
{eq:model}).

As described in the methodology section, we split the data into two halves.
In the first step, we estimate the precision matrices by the CLIME proposed
in \cite{cai11} (denoted as CAPIT+CLIME). The tuning parameter $\lambda$ is
selected by maximizing the log-likelihood function. In the second step, we
perform Algorithms \ref{alg:: ITSCCA} and \ref{alg:: CTSCCA} with $\hat{%
\Sigma}_{12}$ estimated from the second half. The thresholding parameter $%
\gamma _{1}$ and $\gamma _{2}$ are set to be $1.5\sqrt{\frac{\log p}{n}}$
and $t_{ij}$ is set to be $1.5$. The resulted
estimator is denoted as $(\hat{\theta}_{[1]},\hat{\eta}_{[1]})$. Then we swap the data, repeat the above
procedures and obtain $(\hat{\theta}_{[2]},\hat{\eta}_{[2]})$. The final
estimator $(\hat{\theta}, \hat{\eta})$ is the average of $(\hat{\theta}%
_{[1]},\hat{\eta}_{[1]})$ and $(\hat{\theta}_{[2]},\hat{\eta}_{[2]})$.

For comparison, we also apply PMD and SVD in this case. The results from $%
100 $ independent replicates are summarized in Table 2.
A visualization of the estimation from a replicate in from the case $n=500,
p=200$ under Scenario II is shown in Figure \ref{scca_precision_plot}.

\begin{table}	
	\caption{Scenario II: Sparse precision matrix. Estimation errors for $(%
\protect\eta, \protect\theta)$ as measured by $L(\hat{\protect\theta},%
\protect\theta) \vee L(\hat{\protect\eta},\protect\eta)$ based on the median
of 100 replications. Numbers in parentheses are the simulation median
absolute deviations.}
\centering
\fbox{
\begin{tabular}{ccccc}
$p_1 = p_2$ & $n$ & CAPIT+CLIME & PMD & SVD  \\ \hline
200 & 500 & 0.41(0.35) & 1.41(0) & 0.52(0.03)   \\ 
200 & 750 & 0.2(0.05) & 1.19(0.33) & 0.39(0.02)   \\ 
500 &750 & 0.21(0.12) & 1.41(0) & 0.84(0.03)\\ \hline
\end{tabular}
}
 \label{sparse_precision}
\end{table}

\subsection{Discussion on the Simulation Results}

The above results (Table 1 and Table 2) show that our method outperforms the PMD method proposed
by \cite{witten09} and the vanilla SVD method \citep{hotelling36}. It is not
surprising that the SVD method does not perform better than our method
because of the sparse assumption in the signals. We focus our discussion on
the comparison of our method and the PMD method.

The PMD method is defined by the solution of the following optimization
problem 
\begin{equation*}
(\hat{\theta}_{PMD},\hat{\eta}_{PMD})=\arg \max_{(u,v)}\left\{ u^{T}\hat{%
\Sigma}_{12}v:||u||\leq 1,||v||\leq 1,||u||_{1}\leq c_{1},||v||_{1}\leq
c_{2}\right\} .
\end{equation*}%
As noted by \cite{witten09}, the PMD method approximates the covariance $%
\Sigma _{1}$ and $\Sigma _{2}$ by the identity matrices $I_{p_{1}\times
p_{1}}$ and $I_{p_{2}\times p_{2}}$. If we ignore the $l_{1}$
regularization, the population version of PMD is to maximize $u^{T}\Sigma
_{12}v$ subject to $||u||\vee ||v||\leq 1$, which gives the maximizer in the
direction of $(\Sigma _{1}\theta ,\Sigma _{2}\eta )$ instead of $(\theta
,\eta )$. When the covariance matrices $\Sigma _{1}$ and $\Sigma _{2}$ are
sufficiently sparse, $%
(\Sigma _{1}\theta ,\Sigma _{2}\eta )$ and $(\theta ,\eta )$ are close. This
explains that in Scenario I, the PMD method performs well. However, in
Scenario II, we assume the precision matrices $\Omega _{1}$ and $\Omega _{2}$
are sparse. In this case, the corresponding $\Sigma _{1}$ and $\Sigma _{2}$
are not necessarily sparse, implying that $(\Sigma _{1}\theta ,\Sigma
_{2}\eta )$ could be far away from $(\theta ,\eta )$. The PMD method is not
consistent  in this case, as is illustrated in Figure \ref%
{scca_precision_plot}. In contrast, our method takes advantage of the
sparsity of $\Omega _{1}$ and $\Omega _{2}$, and accurately recovers the
canonical directions.

\section{Real Data Analysis}

\label{sec:real}

DNA methylation plays an essential role in the transcriptional regulation 
\citep{vanderkraats2013discovering}. In tumor, DNA methylation patterns are
frequently altered. However, how these alterations contribute to the
tumorigenesis and how they affect gene expression and patient survival
remain poorly characterized. Thus it is of great interest to investigate the
relationship between methylation and gene expression and their interplay
with survival status of cancer patients. We applied the proposed method to a
breast cancer dataset from The Cancer Genome Atlas project \citep
{nature2012breast}. This dataset consists both DNA methylation and gene
expression data for 193 breast cancer patients. The DNA methylation was
measured from Illumina Human methylation 450 BeadChip, which contains
482,431 CpG sites that cover 96$\%$ of the genome-wide CpG islands. Since no
batch effect has either been reported from previous studies or been observed
from our analysis, we do not further process the data. For methylation data,
there are two popular metrics used to measure methylation levels, $\beta$%
-value and M-value statistics. $\beta$-value is defined as the proportion of
methylated probes at a CpG site. M-value is defined as the $\log2$ ratio of
the intensities of methylated probe versus un-methylated probe, which is
reported as approximately homoscedastic in a previous study \cite%
{du2010comparison}. We choose to use M-value for methylation data in our
analysis.

To investigate the relationship of methylation and gene expression and their
interplay with clinical outcomes, we follow the supervised sparse CCA
procedure suggested in \cite{witten2009extensions}. More specifically, we
first select methylation probes and genes that are marginally associated
with the disease free status by performing a screening on methylation and
gene expression data, respectively. There are 135 genes and 4907 methylation
probes marginally associated with disease free status with a P-value less
than 0.01. We further reduce the number of methylation probes to 3206 by
selecting the ones with sample variance greater than 0.5. Compared to the
sample size, the number of methylation probes is still too large. To control
the dimension of input data, we apply our methods to 135 genes with the
methylation probes on each chromosome separately. Since it is widely
believed that genes operate in biological pathways, the graph for gene
expression data is expected to be sparse. We apply the proposed procedure
under the sparse precision matrix setting (Section \ref{sec:sparseprecision}%
). As we have discussed in the simulation studies, the canonical correlation
structure under the sparse precision matrix setting cannot be estimated by
the current methods in the literature, such as PMD.

\begin{figure}
\centering
\includegraphics[width=6in]{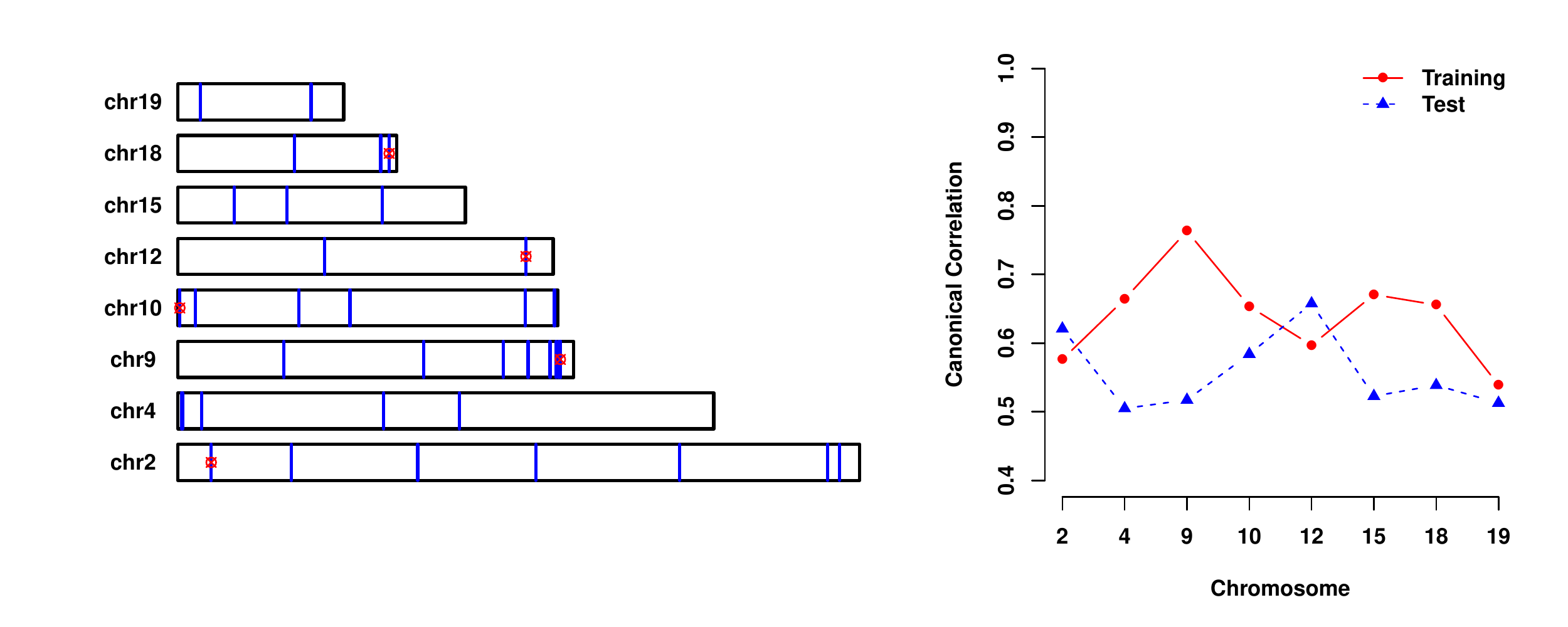}
\caption{Left: Visualization of the genomic coordinates of detected
methylation probes. Genes that represented by more than one probes are
highlighted by square symbols. Right: Canonical correlations of disease
associated genes and methylation probes on eight chromosomes in the training
set and the test set. }
\label{chr}
\end{figure}

\begin{table} 
	\caption{Sparse CCA results for methylation sites and gene expression that
are associated with disease free status for TCGA breast cancer data. In the
analysis, methylation and gene expression data are assumed to have sparse
precision matrix structure. Sparse CCA were performed on the same set of
genes with methylation probes on different chromosomes. Chromosomes with
canonical correlation greater than 0.5 on both the training and test set are
listed with the number of probes associated with disease free status and the
probes form the support of canonical directions. }  \label{cca_breast_cancer_data}
\centering
\fbox{\tiny 
\begin{tabular}{c|c|c}
\hline
& Number of & Genes \\ \cline{3-3}
Chromosome & probes & Methylation probes \\ \hline
&  & RRM2, ILF2, ORC6L, SUSD3, SHCBP1 \\ \cline{3-3}
2 & 269 & cg04799980, cg08022717, cg10142874, cg13052887, cg16297938,
cg24011073, cg26364080, cg27060355 \\ \hline
&  & SLC26A9, C15orf52, NPM2, DNAH11, RAB6B, LIN28, STC2 \\ \cline{3-3}
4 & 143 & cg04812351, cg14505741, cg15566751, cg15763121, cg17232991 \\ 
\hline
&  & RGS6, ORC6L, PTPRH, GPX2, QSOX2, NPM2, SCG3, RAB6B, L1CAM, STC2, REG1A
\\ \cline{3-3}
9 & 89 & cg01729066, cg02127980, cg03693099, cg13413384, cg13486627,
cg13847987, cg14004457, cg14443041, cg21123355 \\ \hline
&  & RRM2, SLC26A9, ORC6L, PTPRH, DNAH11, SCG3, LIN28, UMODL1, C11orf9 \\ 
\cline{3-3}
10 & 116 & cg00827318, cg01162610, cg01520297, cg03182620, cg14522790,
cg14999931, cg19302462 \\ \hline
&  & QSOX2, SCG3 \\ \cline{3-3}
12 & 175 & cg00417147, cg13074795, cg21881338 \\ \hline
&  & C15orf52, NPM2, DNAH11, SELE, RAB6B \\ \cline{3-3}
15 & 92 & cg11465404, cg18581777, cg21735516 \\ \hline
&  & ILF2, SPRR2D, ADCY4, RAB6B, C11orf9, REG1A, SHCBP1 \\ \cline{3-3}
18 & 37 & cg07740306, cg15531009, cg18935516, cg19363889 \\ \hline
&  & ORC6L, NPM2, GPR56 \\ \cline{3-3}
19 & 162 & cg06392698, cg06555246 \\ \hline
\end{tabular}
}
 \label{cca_breast_cancer_data}
\end{table}

\begin{table} 
    \caption{Detected methylation probes and their corresponding genes on
chromosome 9.} 
\centering
\fbox{\tiny 
\begin{tabular}{cccc}
Probe & Gene & Function & $\eta_i$ \\ \hline
cg01729066 & MIR600 & microRNA regulating estrogen factors & 0.282 \\ 
cg14004457 & MIR455 & microRNA regulating estrogen factors & 0.347 \\ 
cg02127980, cg13413384 & RXRA & retinoic X receptors & 0.269, 0.242 \\ 
cg03693099 & CEL & fat catalyzation and vitamin absorption & 0.286 \\ 
cg13486627 & RG9MTD3 & RNA (guanine-9-) methyltransferase & -0.479 \\ 
cg13847987 & ABL1 & a protein tyrosine kinase functioned in cell
differentiation and stress response & 0.334 \\ 
cg21123355 & VAV2 & a member of the VAV guanine nucleotide exchange factor
family of oncogenes & 0.312 \\ 
cg14443041 & Intergenic region &  & 0.384 \\ \hline
\end{tabular}
}
 \label{cca_breast_cancer_data2}
\end{table}

For the purpose of interpretation, the tuning parameters are selected such
that a sparse representation of $(\hat{\theta}, \hat{\eta})$ is obtained
while the canonical correlation is high. More specifically, we require the
number of non-zero genes or probes is less than 10 for each chromosome. We
split the data into two halves as a test set and a training set. Then we
applied the proposed procedure on the training set. To remove false
discoveries, we required the canonical correlation on the test set is
greater than 0.5. In total, there are eight chromosomes that have
methylation probes satisfying the above criteria (shown in Figure \ref{chr}%
). In Table 3, we list genes and methylation
probes on each chromosome that form the support of detected canonical
directions. A further examination of the genomic coordinates of detected
methylation probes reveal the physical closeness of some probes. Some
detected probes correspond to the same gene. LPIN1 on chromosome 2, RXRA on
chromosome 9, DIP2C on chromosome 10, AACS on chromosome 12, and NFATC1 on
chromosome 18 are represented by more than one methylation probes (shown in
Figure \ref{chr}). Moreover, 16 of the 25 genes listed in Table 3 are detected more than once as candidate genes
associated with methylation probes. ORC6L, RRM2, RAB6B
are independently detected from four chromosomes. All these genes have been
proposed as prognosis signature for the metastasis of breast cancer \citep
{weigelt2005molecular, ma2003gene, van2002gene}. Our results suggest the
interplay of their expression with detected methylation sites. We list the functional annotation of probes detected on Chromosome 9 in Table 4
\footnote{ \scriptsize{ The corresponding canonical vector on genes RGS6, ORC6L, PTPRH, GPX2, QSOX2,
NPM2, SCG3, RAB6B, L1CAM, STC2, REG1A is $(-0.252, -0.27, -0.286, -0.244,
-0.35, -0.282, -0.256, -0.367, -0.256, 0.357, -0.358)$.}}.

In this analysis, we assume there is one pair of canonical directions
between methylation and gene expression. We note that when the underlying
canonical correlation structure is low-rank, the pair of canonical
directions obtained from the proposed method lie in the subspace of true
canonical directions. The extracted canonical directions can still be used
to identify sets of methylate sites that are correlated with gene expression.

\section{Proof of Main Theorem}

\label{sec:proof}

We provide the proof of Theorem \ref{thm:main} in this section, which is
based on the construction of an oracle sequence. The proof is similar in
nature to that in \cite{ma2013sparse} which focuses on the sparse PCA
setting. Specifically, we are going to first define the strong signal sets,
and then define an oracle sequence $(\alpha ^{(k),ora},\beta ^{(k),ora})$
produced by Algorithms \ref{alg:: ITSCCA} and \ref{alg:: CTSCCA} only
operating on the strong signal sets. We then show the desired rate of
convergence for this oracle sequence. In the end, a probabilistic argument
shows that with the help of thresholding, the oracle sequence is identical
to the data-driven sequence with high probability. In the following proof,
we condition on the second half of the data $%
(X_{n+1},Y_{n+1}),...,(X_{2n},Y_{2n})$ and the event $\{||\hat{\Omega}%
_{1}\Sigma _{1}-I||\vee ||\hat{\Omega}_{2}\Sigma _{2}-I||=o(1)\}$. The
\textquotedblleft with probability" argument is understood to be with
conditional probability unless otherwise specified. We keep using the
notations $p=p_{1}\vee p_{2}$ and $s=s_{1}\vee s_{2}$.

\subsection{Construction of the Oracle Sequence}

We first define the strong signal set by 
\begin{equation}
H_1=\left\{k: |\alpha_k|\geq\delta_1\sqrt{\frac{\log p_1}{n}}\right\},\quad
H_2=\left\{k:|\beta_k|\geq\delta_2\sqrt{\frac{\log p_2}{n}}\right\}.
\label{eq:signalset}
\end{equation}
We denote their complement in $\{1,2,...,p_1\}$ and $\{1,2,...,p_2\}$ by $%
L_1 $ and $L_2$ respectively. Then we define the oracle version of $\hat{A}$
by taking those coordinates with strong signals. That is, 
\begin{equation*}
\hat{A}^{ora}=%
\begin{pmatrix}
\hat{A}_{H_1H_2} & 0 \\ 
0 & 0%
\end{pmatrix}%
.
\end{equation*}
We construct the oracle initializer $(\alpha^{(0),ora},\beta^{(0),ora})$
based on an oracle version of Algorithm \ref{alg:: CTSCCA} with the sets $%
B_1 $ and $B_2$ replaced by $B_1^{ora}=B_1\cap H_1$ and $B_2^{ora}=B_2\cap
H_2$. It is clear that $\alpha^{(0),ora}_{L_1}=0$ and $%
\beta^{(0),ora}_{L_2}=0$. Feeding the oracle initializer $%
(\alpha^{(0),ora},\beta^{(0),ora})$ and the matrix $\hat{A}^{ora}$ into
Algorithm \ref{alg:: ITSCCA}, we get the oracle sequence $%
(\alpha^{(k),ora},\beta^{(k),ora})$.

\subsection{Data-Driven Thresholding}

\label{sec:threshconst}

Algorithms \ref{alg:: ITSCCA} and \ref{alg:: CTSCCA} contain thresholding
levels $\gamma_1$, $\gamma_2$ and $t_{ij}$. These tuning parameters can be
specified by users. However, our theory is based on fully data-driven tuning
parameters depending on the matrix $\hat{\Omega}_1=(\hat{\omega}_{1,ij})$
and $\hat{\Omega}_2=(\hat{\omega}_{2,ij})$. In particular, we use 
\begin{equation*}
t_{ij}=\frac{20\sqrt{2}}{9}\left( \sqrt{|| \hat{\Omega}_{1}|| \hat{\omega}%
_{2,jj}}+\sqrt{||\hat{\Omega}_{2}|| \hat{\omega}_{1,ii}}+\sqrt{\hat{\omega}%
_{1,ii}\hat{\omega}_{2,jj}}+\sqrt{8|| \hat{\Omega}_{1}|| || \hat{\Omega}%
_{2}|| /3}\right) ,
\end{equation*}
and 
\begin{equation*}
\gamma_1=\Big(0.17\min_{i,j}t_{ij} ||\hat{\Omega}_2||^{1/2}+2.1||\hat{\Omega}%
_2||^{1/2}||\hat{\Omega}_1||^{1/2}+7.5||\hat{\Omega}_2||\Big)\sqrt{\frac{%
\log p}{n}},
\end{equation*}
\begin{equation*}
\gamma_2=\Big(0.17\min_{i,j}t_{ij} ||\hat{\Omega}_1||^{1/2}+2.1||\hat{\Omega}%
_1||^{1/2}||\hat{\Omega}_2||^{1/2}+7.5||\hat{\Omega}_1||\Big)\sqrt{\frac{%
\log p}{n}}.
\end{equation*}
The constants $(\delta_1,\delta_2)$ in (\ref{eq:signalset}) are set as $%
\delta_1=\delta_2=0.08 w^{1/2}\min_{i,j}t_{ij}.$ Such choice of thresholding
levels are used in both the estimating sequence $(\alpha^{(k)},\beta^{(k)})$
and the oracle sequence $(\alpha^{(k),ora},\beta^{(k),ora})$.

\subsection{Outline of Proof}

The proof of Theorem \ref{thm:main} can be divided into the following three
steps.

\begin{enumerate}
\item Show that $\hat{A}^{ora}$ is a good approximation of $%
A=\lambda\alpha\beta^T$ in the sense that their first pairs of singular
vectors are close. Namely, let $(\hat{\alpha}^{ora},\hat{\beta}^{ora})$ be
the first pair of singular vectors of $\hat{A}^{ora}$. We are going to bound 
$L(\hat{\alpha}^{ora},\alpha)$ and $L(\hat{\beta}^{ora},\beta)$.

\item Show that the oracle sequence $(\alpha^{(k),ora},\beta^{(k),ora})$
converges to $(\hat{\alpha}^{ora},\hat{\beta}^{ora})$ after finite steps of
iterations.

\item Show that the estimating sequence $(\alpha^{(k)},\beta^{(k)})$ and the
oracle sequence $(\alpha^{(k),ora},\beta^{(k),ora})$ are identical with high
probability up to the necessary number of steps for convergence. Here, we
need to first show that the oracle initializer $(\alpha^{(0),ora},%
\beta^{(0),ora})$ is identical to the actual $(\alpha^{(0)},\beta^{(0)})$.
Then we are going to show the thresholding step in Algorithm \ref{alg::
ITSCCA} kills all the small coordinates so that the oracle sequence is
identical to the estimating sequence under iteration.
\end{enumerate}

\subsection{Preparatory Lemmas}

In this part, we present lemmas corresponding to the three steps in the
outline of proof. The first lemma corresponds to Step 1.

\begin{lemma}
\label{lem:approxbias} Under Assumptions A and B, we have 
\begin{equation*}
L(\hat{\alpha}^{ora},\alpha )^{2}\vee L(\hat{\beta}^{ora},\beta )^{2}\leq C%
\Bigg(s\Big(\frac{\log p}{n}\Big)^{1-q/2}+||\theta -\alpha |^{2}|\vee ||\eta
-\beta ||^{2}\Bigg),
\end{equation*}%
with probability at least $1-O(p^{-2})$ for some constant $C>0$.
\end{lemma}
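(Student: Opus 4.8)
The plan is to compare the rank-one oracle matrix $\hat{A}^{ora}$ with the target rank-one matrix $A=\lambda\alpha\beta^T$ and apply a Wedin-type $\sin\Theta$ bound. First I would write $\hat{A}^{ora}$ as a perturbation of (a restriction of) $A$. Conditioning on the second half of the data and on the event $\{\xi_\Omega=o(1)\}$, recall that $\mathbb{E}\hat{A}=A=\lambda\alpha\beta^T$ with $\alpha=\hat{\Omega}_1\Sigma_1\theta$, $\beta=\hat{\Omega}_2\Sigma_2\eta$, and that $\hat{A}=\frac{1}{n}\sum_i \tilde X_i\tilde Y_i^T$ with i.i.d.\ transformed data. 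The strong signal sets $H_1,H_2$ in (\ref{eq:signalset}) are defined so that the coordinates of $\alpha$ (resp.\ $\beta$) outside $H_1$ (resp.\ $H_2$) are all below $\delta_1\sqrt{\log p_1/n}$ (resp.\ $\delta_2\sqrt{\log p_2/n}$), and by the weak $\ell_q$ assumption on $\theta,\eta$ (transferred to $\alpha,\beta$ using $\xi_\Omega=o(1)$ and the eigenvalue bounds in Assumption B), the tail energy obeys $\|\alpha_{L_1}\|^2\vee\|\beta_{L_2}\|^2 \le C s (\log p/n)^{1-q/2}$ and the cardinalities satisfy $|H_1|\vee|H_2| \le C s(n/\log p)^{q/2}$. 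Hence $\|A - A_{H_1H_2}\|\le C\lambda\big(\|\alpha_{L_1}\|+\|\beta_{L_2}\|\big)$, which is $O\big((s(\log p/n)^{1-q/2})^{1/2}\big)$ after absorbing the $\|\theta-\alpha\|^2\vee\|\eta-\beta\|^2$ terms allowed in the statement.

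Next I would control the stochastic error $E := \hat{A}_{H_1H_2} - A_{H_1H_2}$ in spectral norm. Since $|H_1|,|H_2|$ are of order $s(n/\log p)^{q/2}$, an $\varepsilon$-net argument over the unit spheres of $\mathbb{R}^{|H_1|}$ and $\mathbb{R}^{|H_2|}$, combined with a Bernstein-type concentration inequality for $\frac{1}{n}\sum_i (u^T\tilde X_i)(\tilde Y_i^Tv)$ — valid because the transformed data are sub-Gaussian with covariance parameters bounded in terms of $\|\hat\Omega_1\|,\|\hat\Omega_2\|$, which are $O(1)$ on the conditioning event — gives $\|E\| \le C\sqrt{\frac{(|H_1|\vee|H_2|)+\log p}{n}}$ with probability at least $1-O(p^{-2})$. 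Plugging in the bound on $|H_1|\vee|H_2|$ yields $\|E\|^2 \le C\big(s(\log p/n)^{1-q/2} + \log p/n\big)$, and the second term is dominated by the first since $s\ge 1$ and $q\le 2$. (This is essentially where the data-driven choice of $\delta_1,\delta_2,t_{ij}$ enters: it guarantees the constant in the net bound is controlled.)

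Finally I would invoke the Davis--Kahan/Wedin $\sin\Theta$ theorem for the singular subspaces of a rank-one matrix. Writing $\hat{A}^{ora} = A + (A_{H_1H_2}-A) + E$ (viewing everything as $p_1\times p_2$ matrices, padding with zeros), the top singular value of $A$ is $\lambda\|\alpha\|\|\beta\|$, which is bounded below by a positive constant since $\lambda\ge C_\lambda$ and $\|\alpha\|,\|\beta\|$ are bounded away from $0$ by Assumption B and $\xi_\Omega=o(1)$; its singular gap is the full $\lambda\|\alpha\|\|\beta\|$ because $A$ is exactly rank one. Hence
\begin{equation*}
L(\hat\alpha^{ora},\alpha)^2\vee L(\hat\beta^{ora},\beta)^2 \le \frac{C}{(\lambda\|\alpha\|\|\beta\|)^2}\Big(\|A-A_{H_1H_2}\|+\|E\|\Big)^2 \le C\Big(s(\log p/n)^{1-q/2}+\|\theta-\alpha\|^2\vee\|\eta-\beta\|^2\Big),
\end{equation*}
which is the claim.

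I expect the main obstacle to be the stochastic step: getting the spectral-norm bound on $E$ with the sharp dimension factor $|H_1|\vee|H_2|$ rather than a crude $p_1\wedge p_2$, while simultaneously tracking that the relevant sub-Gaussian norms of the transformed data $\tilde X_i=\hat\Omega_1 X_i$, $\tilde Y_i=\hat\Omega_2 Y_i$ are uniformly bounded on the conditioning event — this requires care because $\hat\Omega_1,\hat\Omega_2$ are random (estimated from the second half) and one must use the event $\xi_\Omega=o(1)$ together with $\|\Omega_i\|_{l_1}\le w^{-1}$ to pin down $\|\hat\Omega_1\|,\|\hat\Omega_2\|=O(1)$. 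The transfer of the weak-$\ell_q$ sparsity from $(\theta,\eta)$ to $(\alpha,\beta)$ and the bookkeeping of which error terms get absorbed into $\|\theta-\alpha\|^2\vee\|\eta-\beta\|^2$ is routine but must be done carefully to match the stated bound.
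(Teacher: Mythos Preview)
Your proposal is correct and follows essentially the same strategy as the paper: a Wedin $\sin\Theta$ bound driven by (i) the bias from zeroing out the weak-signal coordinates, controlled via $\|\alpha_{L_1}\|,\|\beta_{L_2}\|$ and the weak-$\ell_q$ assumption, and (ii) the stochastic error $\|\hat A_{H_1H_2}-A_{H_1H_2}\|$ on the strong-signal block. The organization differs slightly: the paper inserts an intermediate vector $\alpha^{ora}=(\alpha_{H_1},0)$ and bounds $L(\hat\alpha^{ora},\alpha^{ora})$ and $L(\alpha^{ora},\alpha)$ separately (Lemma~\ref{lem:1st}), applying Wedin only on the $|H_1|\times|H_2|$ submatrix with denominator $\hat l_1$, whereas you apply Wedin once to the full padded matrices with denominator $\lambda\|\alpha\|\|\beta\|$; both are equivalent once one knows $\hat l_1$ is bounded below.

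The one substantive technical difference is how you control $\|E\|=\|\hat A_{H_1H_2}-A_{H_1H_2}\|$. The paper does not use an $\varepsilon$-net; instead it invokes the latent representation $\tilde X_i=\sqrt\lambda\,\alpha Z_i+X_i'$, $\tilde Y_i=\sqrt\lambda\,\beta Z_i+Y_i'$ with $Z_i,X_i',Y_i'$ independent (Lemma~\ref{lem:latent}), expands $\hat A_{H_1H_2}$ into four explicit pieces, and bounds each via tailored concentration results (Lemmas~\ref{lem:concen1}--\ref{lem:concen3}); this is packaged as Lemma~\ref{lem:concen4}. Your $\varepsilon$-net plus Bernstein route gives the same rate and is arguably more direct for this lemma in isolation; the paper's decomposition has the advantage that the individual pieces (e.g.\ $\|n^{-1}\sum Z_iX'_{i,L_1}\|_\infty$) are reused verbatim in the proofs of Lemmas~\ref{lem:same} and \ref{lemma:: Initial}, so no extra work is needed there.

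One small correction: to get $\|\hat\Omega_i\|=O(1)$ you should appeal to $\lambda_{\min}(\Sigma_i)\ge w$ in Assumption~B (which gives $\|\Omega_i\|\le w^{-1}$, hence $\|\hat\Omega_i\|\le(1+o(1))w^{-1}$ on $\{\xi_\Omega=o(1)\}$), not to $\|\Omega_i\|_{l_1}\le w^{-1}$, which is only assumed in the sparse-precision corollary and not in the general theorem.
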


Let $(\hat{l}_{1},\hat{l}_{2})$ be the first and second singular values of $%
\hat{A}^{ora}$. Then we have the following results, corresponding to Step 2.

\begin{lemma}
\label{lem:iteration} Under Assumptions A and B, we have 
\begin{equation*}
L({\alpha}^{(1),ora},\hat{\alpha}^{ora})^2\leq 4\left|\frac{\hat{l}_2}{\hat{l%
}_1}\right|^2+\frac{32\gamma_1^2|H_1|}{|\hat{l}_1|^2},
\end{equation*}
\begin{equation*}
L({\beta}^{(1),ora},\hat{\beta}^{ora})^2\leq 4\left|\frac{\hat{l}_2}{\hat{l}%
_1}\right|^2+\frac{32\gamma_2^2|H_2|}{|\hat{l}_1|^2},
\end{equation*}
for $k=1$, and 
\begin{equation*}
L({\alpha}^{(k),ora},\hat{\alpha}^{ora})^2\leq \max\Bigg(4\left|\frac{\hat{l}%
_2}{\hat{l}_1}\right|^2\frac{64\gamma_2^2|H_2|}{|\hat{l}_1|^2}+\frac{%
64\gamma_1^2|H_1|}{|\hat{l}_1|^2},\Big(32\left|\frac{\hat{l}_2}{\hat{l}_1}%
\right|^4\Big)^{[k/2]}\Bigg),
\end{equation*}
\begin{equation*}
L({\beta}^{(k),ora},\hat{\beta}^{ora})^2\leq \max\Bigg(4\left|\frac{\hat{l}_2%
}{\hat{l}_1}\right|^2\frac{64\gamma_1^2|H_1|}{|\hat{l}_1|^2}+\frac{%
64\gamma_2^2|H_2|}{|\hat{l}_1|^2},\Big(32\left|\frac{\hat{l}_2}{\hat{l}_1}%
\right|^4\Big)^{[k/2]}\Bigg),
\end{equation*}
for all $k\geq 2$ with probability at least $1-O(p^{-2})$.
\end{lemma}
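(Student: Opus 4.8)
The plan is to reduce the statement to a scalar recursion for the sine of the angle between the oracle iterates and the leading singular pair of $\hat A^{ora}$, and then to unroll it. Write the SVD $\hat A^{ora}=\sum_j\hat l_j\hat u_j\hat v_j^T$ with $\hat l_1\ge\hat l_2\ge\cdots\ge 0$, so that $(\hat u_1,\hat v_1)=(\hat\alpha^{ora},\hat\beta^{ora})$ and every $\hat u_j$ is supported on $H_1$ while every $\hat v_j$ is supported on $H_2$, since $\hat A^{ora}$ has nonzero entries only in the block indexed by $H_1\times H_2$. The first observation is that for \emph{any} vector $b$ the product $\hat A^{ora}b$ is supported on $H_1$, so by property (i) of the thresholding function (and property (ii), which keeps the support of the thresholded vector inside $H_1$) the perturbation from the left thresholding step satisfies $\|T(\hat A^{ora}b,\gamma_1)-\hat A^{ora}b\|\le\gamma_1\sqrt{|H_1|}$, and symmetrically $\|T(a\hat A^{ora},\gamma_2)-a\hat A^{ora}\|\le\gamma_2\sqrt{|H_2|}$. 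Hence the oracle power iteration (Algorithm \ref{alg:: ITSCCA} run on $\hat A^{ora}$) is exactly the power method on $\hat A^{ora}$ perturbed by an additive, norm-bounded error at each half-step.

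Next I would record the one-step contraction. Expanding $\beta^{(k-1),ora}=\sum_j c_j\hat v_j$ gives $\hat A^{ora}\beta^{(k-1),ora}=\sum_j\hat l_jc_j\hat u_j$, hence $\|\hat A^{ora}\beta^{(k-1),ora}\|\ge\hat l_1|c_1|=\hat l_1\cos\angle(\beta^{(k-1),ora},\hat v_1)$ and $\tan\angle(\hat A^{ora}\beta^{(k-1),ora},\hat u_1)\le|\hat l_2/\hat l_1|\,\tan\angle(\beta^{(k-1),ora},\hat v_1)$. Passing to the unit vector and measuring $\sin\angle(\cdot,\hat u_1)$ changes the quantity by at most a fixed multiple of $\|\delta\|/\|\hat A^{ora}\beta^{(k-1),ora}\|$ under the thresholding perturbation $\delta$, so
\begin{equation*}
\sin\angle(\alpha^{(k),ora},\hat u_1)\le |\hat l_2/\hat l_1|\,\tan\angle(\beta^{(k-1),ora},\hat v_1)+\frac{2\gamma_1\sqrt{|H_1|}}{\hat l_1\cos\angle(\beta^{(k-1),ora},\hat v_1)},
\end{equation*}
together with the mirror inequality bounding $\sin\angle(\beta^{(k),ora},\hat v_1)$ through $\alpha^{(k),ora}$ and $\gamma_2\sqrt{|H_2|}$.

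Then I would run a simultaneous induction on both angles. For the base case $k=1$, Lemma \ref{lem:approxbias} and the construction of the oracle initializer in Algorithm \ref{alg:: CTSCCA} give, on the high-probability event in force, that $\angle(\beta^{(0),ora},\hat v_1)$ is bounded away from $\pi/2$ (say at most $\pi/4$); inserting $\tan\le 1$, $\cos\ge 1/\sqrt2$ into the displayed bound and using $L(\cdot,\cdot)^2=2\sin^2\angle(\cdot,\cdot)$ and $(x+y)^2\le 2x^2+2y^2$ yields the stated $k=1$ estimates. For the inductive step one keeps the invariant that both angles stay below $\pi/4$, which is preserved because $|\hat l_2/\hat l_1|<1$ and, by the sparsity condition in Assumption A (together with $\hat l_1$ bounded below), the floor terms $\gamma_1^2|H_1|/\hat l_1^2$ and $\gamma_2^2|H_2|/\hat l_1^2$ are $o(1)$. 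Under the invariant, writing $\phi_k=\sin\angle(\alpha^{(k),ora},\hat u_1)$ and $\psi_k=\sin\angle(\beta^{(k),ora},\hat v_1)$, the two inequalities linearize to $\phi_k\le c\,|\hat l_2/\hat l_1|\,\psi_{k-1}+b_1$ and $\psi_k\le c\,|\hat l_2/\hat l_1|\,\phi_k+b_2$ with $b_i\asymp\gamma_i\sqrt{|H_i|}/\hat l_1$; composing over one full cycle gives $\phi_k\le c^2|\hat l_2/\hat l_1|^2\phi_{k-1}+O(b_1+b_2)$, and unrolling this geometric recursion from the $k=1$ bound, squaring, and collecting constants reproduces the stated $k\ge 2$ bounds, with $(32|\hat l_2/\hat l_1|^4)^{[k/2]}$ recording the contraction accumulated over the cycles and the other term in the maximum recording the accumulated thresholding floor.

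I expect the main obstacle to be the bookkeeping that keeps the inductive invariant alive: showing that $\cos\angle(\beta^{(k-1),ora},\hat v_1)$ and its left-side counterpart stay bounded below along the entire trajectory, so that every normalization step is well defined and the passage from $\tan$ to $\sin$ is legitimate. This is precisely where Assumption A is used (to push the floor terms below the invariant threshold) and where the probability-$1-O(p^{-2})$ qualifier enters, through the event of Lemma \ref{lem:approxbias} on which $\hat l_1$ is bounded below and $\hat A^{ora}$ is close to the rank-one matrix $A=\lambda\alpha\beta^T$, so that the initializer is genuinely non-orthogonal to $\hat v_1$. The remaining manipulations are routine estimates for angles and norms.
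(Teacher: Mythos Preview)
Your approach is essentially the paper's: it too establishes a one-step bound by combining the power-method contraction (your $\tan$ inequality is the paper's Lemma \ref{lem:powerSVD}) with the thresholding perturbation $\gamma_i\sqrt{|H_i|}$, maintains by induction the invariant that the angles stay bounded away from $\pi/2$, and then unrolls the resulting two-step recursion (the paper packages these as Lemmas \ref{lem:2ndprep} and \ref{lem:2nd}). The one correction is the base case: Lemma \ref{lem:approxbias} controls $L(\hat\alpha^{ora},\alpha)$, not $L(\beta^{(0),ora},\hat\beta^{ora})$; the consistency of the initializer with the leading singular pair of $\hat A^{ora}$ is a separate fact (the paper's Lemma \ref{lemma:: Initial}, Result 3), which requires its own Wedin-type argument comparing $\hat A_{B_1 B_2}$ to $\hat A_{H_1 H_2}$.
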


The quantities $|\hat{l}_1|$, $|\hat{l}_2/\hat{l}_1|$, $|H_1|$ and $|H_2|$
are determined by the following lemma.

\begin{lemma}
\label{lem:eigengap} With probability at least $1-O(p^{-2})$, 
\begin{eqnarray*}
|\hat{l}_2|^2 &\leq& C\Bigg(s\Big(\frac{\log p}{n}\Big)^{1-q/2}+||\theta-%
\alpha||^2\vee||\eta-\beta||^2\Bigg), \\
|\hat{l}_1|^{-2}&\leq& C.
\end{eqnarray*}
Moreover, 
\begin{equation*}
|H_1|\leq C\Bigg(s_1\Big(\frac{\log p_1}{n}\Big)^{-q/2}+\Big(\frac{\log p_1}{%
n}\Big)^{-1}||\theta-\alpha||^2\Bigg),
\end{equation*}
\begin{equation*}
|H_2|\leq C\Bigg(s_2\Big(\frac{\log p_2}{n}\Big)^{-q/2}+\Big(\frac{\log p_2}{%
n}\Big)^{-1}||\eta-\beta||^2\Bigg),
\end{equation*}
for some constant $C>0$.
\end{lemma}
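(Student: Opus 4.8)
The plan is to relate all four quantities --- $|\hat l_1|$, $|\hat l_2|$, $|H_1|$, $|H_2|$ --- to the matrix $\hat A^{ora}$, which is the $(H_1,H_2)$-submatrix of $\hat A = \hat\Omega_1\hat\Sigma_{12}\hat\Omega_2$, via a bias-variance decomposition $\hat A^{ora} = A^{ora} + E^{ora}$, where $A^{ora}$ is the corresponding submatrix of $A = \lambda\alpha\beta^T$ and $E^{ora}$ is a mean-zero noise matrix. Since $A$ is exactly rank one with nonzero singular value $\lambda\|\alpha\|\,\|\beta\|$, and since under Assumption B the quantities $\|\alpha\|$, $\|\beta\|$ are bounded away from $0$ and $\infty$ (using $\xi_\Omega = o(1)$ and $\|\Sigma_i\theta\|$, $\|\Sigma_i\eta\|\asymp 1$), the restriction $A^{ora}$ still has its leading singular value bounded below by a constant, provided the strong-signal sets $H_1,H_2$ capture enough of $\alpha$, $\beta$ (which they do by construction of $H_1,H_2$ and the sparsity Assumption A). Then $|\hat l_1|\geq |\text{leading sv of }A^{ora}| - \|E^{ora}\|$ and $|\hat l_2| = |\hat l_2 - 0|\leq \|E^{ora}\|$ by Weyl's inequality applied to singular values (since $A^{ora}$ has rank one, its second singular value is $0$). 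So both spectral bounds reduce to controlling $\|E^{ora}\|$.

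The bound on $\|E^{ora}\|$ is the crux. Here I would write $E^{ora} = \hat A^{ora}_{H_1H_2} - (\lambda\alpha\beta^T)_{H_1H_2}$, note that conditionally on the second half of the data $\hat A = \frac1n\sum_{i=1}^n \tilde X_i\tilde Y_i^T$ is an average of i.i.d.\ terms with mean $\lambda\alpha\beta^T$, and invoke a sub-exponential/sub-Gaussian matrix Bernstein-type concentration inequality for the restricted (hence dimension-$|H_1|\times|H_2|$) matrix. Crucially the dimensions entering the deviation are $|H_1|$ and $|H_2|$ rather than $p_1,p_2$, so the spectral-norm deviation is of order $\sqrt{(|H_1|\vee|H_2|)/n}$ up to logarithmic factors, and combined with Assumption A's sparsity condition \eqref{SP condition} this is $o(1)$; plugging in the bounds on $|H_i|$ established in the same lemma (see below) makes $\|E^{ora}\|^2$ match the claimed rate $s(\log p/n)^{1-q/2} + \|\theta-\alpha\|^2\vee\|\eta-\beta\|^2$. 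I would expect this step --- getting the right entrywise variance proxies for the transformed data $\tilde X_i=\hat\Omega_1 X_i$, $\tilde Y_i=\hat\Omega_2 Y_i$ conditionally, and controlling the spectral norm of the restricted noise matrix without losing the restriction --- to be the main obstacle, though it is largely parallel to the sparse-PCA argument in \cite{ma2013sparse}.

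For the cardinality bounds on $H_1$ (and symmetrically $H_2$), I would argue as follows. By definition, $|H_1|$ counts coordinates $k$ with $|\alpha_k|\geq\delta_1\sqrt{\log p_1/n}$. Split $\alpha = \theta + (\alpha-\theta)$. The coordinates where $|\theta_k|\geq\frac12\delta_1\sqrt{\log p_1/n}$ number at most $C s_1(\log p_1/n)^{-q/2}$ by the weak-$l_q$ condition on $\theta$ in Assumption A (the standard counting bound: $|\{k:|\theta_{(k)}|\geq t\}|\leq (s_1/t^q)$ after accounting for the weak-$l_q$ envelope). On the complementary coordinates, $|\alpha_k|\geq\delta_1\sqrt{\log p_1/n}$ forces $|\alpha_k-\theta_k|\geq\frac12\delta_1\sqrt{\log p_1/n}$, and there can be at most $\|\alpha-\theta\|^2 \big/ (\frac12\delta_1)^2 (\log p_1/n)$ such coordinates by Markov/counting. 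Adding the two contributions and absorbing constants gives exactly the stated bound $|H_1|\leq C\big(s_1(\log p_1/n)^{-q/2} + (\log p_1/n)^{-1}\|\theta-\alpha\|^2\big)$. This part is routine; its only subtlety is that $\delta_1$ is defined in Section \ref{sec:threshconst} in terms of the data-dependent $t_{ij}$, so I would first note that on the conditioning event $\{\xi_\Omega=o(1)\}$ the $\hat\Omega_i$-dependent quantities in $t_{ij}$ are within constant factors of their population analogues, hence $\delta_1$ is bounded above and below by absolute constants and the counting bounds go through uniformly.
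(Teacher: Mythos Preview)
Your proposal is correct and follows the paper's strategy closely. The cardinality bounds on $|H_1|,|H_2|$ are obtained exactly as you describe (this is the paper's Lemma~\ref{lem:cardinality}), and the singular-value bounds do indeed reduce via Weyl's inequality to controlling $\|\hat A_{H_1H_2}-A_{H_1H_2}\|$, noting that $A_{H_1H_2}$ is rank one with $l_2=0$ and $l_1$ bounded below (this is the paper's Lemma~\ref{lem:concen4}).

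The one place where the paper's execution differs from your sketch is the mechanism for bounding the restricted spectral error. You propose a black-box matrix Bernstein on the i.i.d.\ sum $\frac1n\sum_i(\tilde X_i\tilde Y_i^T)_{H_1H_2}-A_{H_1H_2}$. The paper instead uses the latent-variable representation $\tilde X_i=\sqrt\lambda\,\alpha Z_i+X_i'$, $\tilde Y_i=\sqrt\lambda\,\beta Z_i+Y_i'$ with $Z_i,X_i',Y_i'$ independent (Lemma~\ref{lem:latent}), which expands the error into four explicit pieces: a scalar chi-square fluctuation times $A_{H_1H_2}$, two rank-one cross terms $\alpha_{H_1}(\frac1n\sum Z_iY_{i,H_2}')^T$ and $(\frac1n\sum Z_iX_{i,H_1}')\beta_{H_2}^T$, and the pure-noise product $\frac1n X_{H_1}'^TY_{H_2}'$. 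Each piece is then bounded by elementary Gaussian concentration (Lemmas~\ref{lem:concen1}--\ref{lem:concen3}), with the dimensions $|H_1|,|H_2|$ entering exactly where you expect. This buys slightly sharper control of constants and sidesteps the need to track sub-exponential variance proxies for correlated products; your matrix-Bernstein route would also work and is conceptually cleaner, but the decomposition is what makes the explicit data-driven thresholds in Section~\ref{sec:threshconst} tractable.
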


Finally, we show that the oracle sequence and the actual sequence are
identical with high probability, corresponding to Step 3. For the
initializer, we have the following lemma.

\begin{lemma}
\label{lem:same0} Under Assumptions A and B, we have $B_1^{ora}=B_1 $ and $%
B_2^{ora}=B_2$ with probability at least $1-O(p^{-2})$. Thus, $%
(\alpha^{(0),ora},\beta^{(0),ora})=(\alpha^{(0)},\beta^{(0)})$.
\end{lemma}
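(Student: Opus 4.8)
The plan is to establish the set identity $B_1^{ora}=B_1$ on a high-probability event; the argument for $B_2^{ora}=B_2$ is identical after exchanging the roles of rows and columns, and the equality of the initializers follows at once. Since $B_1^{ora}=B_1\cap H_1$ by construction, it suffices to prove $B_1\subseteq H_1$, or, in contrapositive form, that every index $i\in L_1$ (i.e.\ with $|\alpha_i|<\delta_1\sqrt{\log p_1/n}$) fails the coordinate-selection rule of Algorithm~\ref{alg:: CTSCCA}, namely $\max_j|\hat a_{ij}|/t_{ij}<\sqrt{\log p_1/n}$. The basic decomposition is $\hat a_{ij}=\lambda\alpha_i\beta_j+(\hat a_{ij}-\lambda\alpha_i\beta_j)$, where, conditionally on the second half of the sample, $\lambda\alpha_i\beta_j$ is the mean of $\hat a_{ij}=n^{-1}\sum_{k=1}^n\tilde X_{k,i}\tilde Y_{k,j}$ with $\tilde X_k=\hat\Omega_1X_k$, $\tilde Y_k=\hat\Omega_2Y_k$.

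\emph{Bias term.} Because $\eta^T\Sigma_2\eta=1$ and $\lambda_{\min}(\Sigma_2)\ge w$, we have $\|\eta\|\le w^{-1/2}$, hence on the event $\{\xi_\Omega=o(1)\}$, $|\beta_j|\le\|\beta\|=\|\hat\Omega_2\Sigma_2\eta\|\le(1+o(1))w^{-1/2}$. Using $\lambda\le1$ and the data-driven choice $\delta_1=0.08\,w^{1/2}\min_{i,j}t_{ij}$ from Section~\ref{sec:threshconst}, for $i\in L_1$ we get $|\lambda\alpha_i\beta_j|\le 0.08(1+o(1))(\min_{i,j}t_{ij})\sqrt{\log p_1/n}\le 0.09\,t_{ij}\sqrt{\log p_1/n}$ for all large $n$.

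\emph{Fluctuation term, and conclusion.} Conditionally on $(X_{n+1},Y_{n+1}),\dots,(X_{2n},Y_{2n})$ the pairs $(\tilde X_{k,i},\tilde Y_{k,j})$ are i.i.d.\ in $k$ and jointly Gaussian, with conditional variances $(\hat\Omega_1\Sigma_1\hat\Omega_1)_{ii}$ and $(\hat\Omega_2\Sigma_2\hat\Omega_2)_{jj}$; on $\{\xi_\Omega=o(1)\}$ these equal $(1+o(1))\hat\omega_{1,ii}$ and $(1+o(1))\hat\omega_{2,jj}$. A Bernstein-type bound for averages of products of jointly Gaussian scalars, followed by a union bound over all $p_1p_2$ entries (which inflates the exponent by only a logarithmic factor), gives that, on an event of conditional probability at least $1-O(p^{-2})$,
\[
|\hat a_{ij}-\lambda\alpha_i\beta_j|\le\rho\,t_{ij}\sqrt{\log p_1/n}\qquad\text{for all }i,j,
\]
for some fixed $\rho<1$; this is exactly what the constant $20\sqrt2/9$ and the four-term form of $t_{ij}$ are designed to ensure, since $\|\hat\Omega_1\|\hat\omega_{2,jj}$, $\|\hat\Omega_2\|\hat\omega_{1,ii}$, $\hat\omega_{1,ii}\hat\omega_{2,jj}$ and $\tfrac{8}{3}\|\hat\Omega_1\|\|\hat\Omega_2\|$ are, up to $(1+o(1))$ factors, the relevant variance and sub-exponential-scale proxies for $\hat a_{ij}$. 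Combining the two parts, on this event $|\hat a_{ij}|/t_{ij}\le(0.09+\rho)\sqrt{\log p_1/n}<\sqrt{\log p_1/n}$ for every $i\in L_1$ and every $j$, so $i\notin B_1$; hence $B_1\subseteq H_1$ and $B_1^{ora}=B_1$. Intersecting with the symmetric event for the columns, $B_1^{ora}=B_1$ and $B_2^{ora}=B_2$ simultaneously with probability at least $1-O(p^{-2})$. On that event, since $B_1^{ora}\subseteq H_1$, $B_2^{ora}\subseteq H_2$ and $\hat A^{ora}$ agrees with $\hat A$ on $H_1\times H_2$, the submatrix fed to the reduced SVD in the oracle version, $(\hat A^{ora})_{B_1^{ora},B_2^{ora}}$, coincides with $\hat A_{B_1,B_2}$; thus the reduced SVD and the subsequent zero-padding in Algorithm~\ref{alg:: CTSCCA} return the same vectors, i.e.\ $(\alpha^{(0),ora},\beta^{(0),ora})=(\alpha^{(0)},\beta^{(0)})$.

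The crux is the uniform entrywise deviation bound for $\hat A$ with the sharp constants built into $t_{ij}$: one must control bilinear forms in conditionally Gaussian vectors, reduce the variance proxies $(\hat\Omega_k\Sigma_k\hat\Omega_k)_{ii}$ to the $\hat\omega_{k,ii}$ appearing in $t_{ij}$ on the event $\{\xi_\Omega=o(1)\}$, and verify that after the union bound the constant $\rho$ is strictly below $1$, with enough slack for the bias term $0.09$. This concentration estimate is the genuine work; it is naturally isolated as a separate lemma (the same one that controls the thresholds $\gamma_1,\gamma_2$) and simply invoked here.
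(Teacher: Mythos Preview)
Your proposal is correct and follows essentially the same route as the paper. The paper's proof of this lemma is a one-line reference to Result~1 of Lemma~\ref{lemma:: Initial} ($B_i\subset H_i$), and the proof of that result proceeds exactly as you describe: a bias/fluctuation split of $\hat a_{ij}$, the bias bound $|\lambda\alpha_i\beta_j|\le 0.09\,t_{ij}\sqrt{\log p/n}$ coming from $\delta_1=0.08\,w^{1/2}\min_{i,j}t_{ij}$ and $\|\beta\|\le(1+o(1))w^{-1/2}$, and a uniform entrywise concentration bound for $\hat a_{ij}-a_{ij}$ obtained from the latent representation $\tilde X_i=\sqrt\lambda\,\alpha Z_i+X_i'$, $\tilde Y_i=\sqrt\lambda\,\beta Z_i+Y_i'$ together with a union bound over $p_1p_2$ entries (the paper packages the latter as the inequality $\mathbb P\big(|\hat a_{ij}-a_{ij}|/t_{ij}>(1+o(1))\tfrac{9b_1}{20\sqrt2}\sqrt{\log p/n}\big)\le 8p^{-b_1^2/2}$ with $b_1=2\sqrt2$, giving $\rho\approx 0.9$). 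The only cosmetic difference is that the paper routes the argument through the intermediate set $B_1^{++}=\{i:\max_j|a_{ij}|/t_{ij}>0.09\sqrt{\log p/n}\}$ (showing $B_1\subset B_1^{++}\subset H_1$), whereas you go directly from $i\in L_1$ to $i\notin B_1$; the constants and the concentration ingredient are identical.
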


We proceed to analyze the sequence for $k\geq 1$ using mathematical
induction. By iteration in Algorithm \ref{alg:: ITSCCA}, we have 
\begin{equation*}
\alpha^{(k),ora}=\frac{T(\hat{A}^{ora}\beta^{(k-1),ora},\gamma_1)}{||T(\hat{A%
}^{ora}\beta^{(k-1),ora},\gamma_1)||}, \quad \beta^{(k),ora}=\frac{T(\hat{A}%
^{ora,T}\alpha^{(k-1),ora},\gamma_2)}{||T(\hat{A}^{ora,T}\alpha^{(k-1),ora},%
\gamma_2)||}.
\end{equation*}
Suppose we have $(\alpha^{(k-1),ora},\beta^{(k-1),ora})=(\alpha^{(k-1)},%
\beta^{(k-1)})$. Then as long as 
\begin{eqnarray}
T(\hat{A}^{ora}\beta^{(k-1),ora},\gamma_1) &=& T(\hat{A}\beta^{(k-1),ora},%
\gamma_1)  \label{eq:thresh1} \\
T(\hat{A}^{ora,T}\alpha^{(k-1),ora},\gamma_2) &=& T(\hat{A}%
^T\alpha^{(k-1),ora},\gamma_2),  \label{eq:thresh2}
\end{eqnarray}
we have $(\alpha^{(k),ora},\beta^{(k),ora})=(\alpha^{(k)},\beta^{(k)})$.
Hence, it is sufficient to prove (\ref{eq:thresh1}) and (\ref{eq:thresh2}).
Then, the result follows from mathematical induction. Without loss of
generality, we analyze (\ref{eq:thresh1}) as follows. Since $%
\beta_{L_2}^{(0),ora}=0$, we may assume $\beta_{L_2}^{(k-1),ora}=0$ at the $%
k $-th step. The vectors $\hat{A}^{ora}\beta^{(k-1),ora}$ and $\hat{A}%
\beta^{(k-1),ora}$ are respectively 
\begin{equation*}
\hat{A}^{ora}\beta^{(k-1),ora}=%
\begin{pmatrix}
\hat{A}_{H_1H_2} & 0 \\ 
0 & 0%
\end{pmatrix}%
\begin{pmatrix}
\beta^{(k-1),ora}_{H_2} \\ 
0%
\end{pmatrix}%
=%
\begin{pmatrix}
\hat{A}_{H_1H_2}\beta^{(k-1),ora}_{H_2} \\ 
0%
\end{pmatrix}%
,
\end{equation*}
\begin{equation*}
\hat{A}\beta^{(k-1),ora}=%
\begin{pmatrix}
\hat{A}_{H_1H_2} & \hat{A}_{H_1L_2} \\ 
\hat{A}_{L_1H_2} & \hat{A}_{L_1L_2}%
\end{pmatrix}%
\begin{pmatrix}
\beta^{(k-1),ora}_{H_2} \\ 
0%
\end{pmatrix}%
=%
\begin{pmatrix}
\hat{A}_{H_1H_2}\beta^{(k-1),ora}_{H_2} \\ 
\hat{A}_{L_1H_2}\beta_{H_2}^{(k-1),ora}%
\end{pmatrix}%
.
\end{equation*}
Hence, as long as $||\hat{A}_{L_1H_2}\beta_{H_2}^{(k-1),ora}||_{\infty}\leq%
\gamma_1$, (\ref{eq:thresh1}) holds. Similarly, as long as $||\hat{A}%
_{H_1L_2}^T\alpha_{H_1}^{(k-1),ora}||_{\infty}\leq\gamma_2$, (\ref%
{eq:thresh2}) holds. This is guaranteed by the following lemma.

\begin{lemma}
\label{lem:same} For any sequence of unit vectors $(a^{(k)},b^{(k)})\in 
\mathbb{R}^{\left\vert H_{1}\right\vert }\times \mathbb{R}^{\left\vert
H_{2}\right\vert }$, with $k=1,2,...,K$ for some $K=O(1)$. We assume that
they only depend on $\hat{A}_{H_{1}H_{2}}$. Then, under the current choice
of $(\gamma _{1},\gamma _{2})$, we have 
\begin{equation*}
||\hat{A}_{L_{1}H_{2}}b^{(k)}||_{\infty }\leq \gamma _{1},\quad ||\hat{A}%
_{H_{1}L_{2}}^{T}a^{(k)}||_{\infty }\leq \gamma _{2},
\end{equation*}%
for all $k=1,...,K$ with probability at least $1-O(p^{-2})$.
\end{lemma}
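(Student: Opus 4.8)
I prove the first inequality $\|\hat A_{L_1H_2}b^{(k)}\|_\infty\le\gamma_1$; the second follows by exchanging the roles of $(X,\hat\Omega_1,H_1,L_1)$ and $(Y,\hat\Omega_2,H_2,L_2)$. Throughout we condition on the second half of the sample and on $\{\xi_\Omega=o(1)\}$, as elsewhere in this proof, so $\hat\Omega_1,\hat\Omega_2$ are fixed, $\hat A=\frac1n\sum_{i=1}^n\tilde X_i\tilde Y_i^{T}$ with $\tilde X_i=\hat\Omega_1X_i$, $\tilde Y_i=\hat\Omega_2Y_i$ i.i.d.\ Gaussian, and $\mathbb E\hat A=\lambda\alpha\beta^{T}$ with $\alpha=\hat\Omega_1\Sigma_1\theta$, $\beta=\hat\Omega_2\Sigma_2\eta$. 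Put $E:=\hat A-\lambda\alpha\beta^{T}$, so for each $l\in L_1$, $(\hat A_{L_1H_2}b^{(k)})_l=\lambda\alpha_l(\beta_{H_2}^{T}b^{(k)})+(E_{\{l\},H_2}b^{(k)})$. By the definition \eqref{eq:signalset} of $H_1$ every $l\in L_1$ has $|\alpha_l|<\delta_1\sqrt{\log p_1/n}$, and $\|\beta\|\le\|\hat\Omega_2\Sigma_2\|\|\eta\|\le(1+\xi_\Omega)w^{-1/2}=O(1)$ by Assumption~B, so the first term is at most a fixed multiple of $\delta_1\sqrt{\log p/n}$, which for the stated $\delta_1=0.08w^{1/2}\min_{ij}t_{ij}$ is a small fraction of $\gamma_1$. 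It therefore suffices to bound $\max_{l\in L_1}\max_{1\le k\le K}|E_{\{l\},H_2}b^{(k)}|$ by a fraction of $\gamma_1$ with probability $1-O(p^{-2})$.

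Here I use that $b^{(k)}$ depends only on $\hat A_{H_1H_2}$. Let $\mathcal G=\sigma(\{(\tilde X_i)_{H_1},(\tilde Y_i)_{H_2}\}_{i\le n})$; then every $b^{(k)}$ is $\mathcal G$-measurable. For each $i$ use the Gaussian conditional decomposition $(\tilde X_i)_l=\mu_{l,i}+Z_{l,i}$, where $\mu_{l,i}=\mathbb E[(\tilde X_i)_l\mid(\tilde X_i)_{H_1},(\tilde Y_i)_{H_2}]=c_l^{T}(\tilde X_i)_{H_1}+d_l^{T}(\tilde Y_i)_{H_2}$ is the population regression (linear in the conditioning vector) and $Z_{l,i}$ the residual; by the i.i.d.\ structure $\{Z_{\cdot,i}\}_i$ is independent of $\mathcal G$ and $\mathrm{Var}(Z_{l,1})\le(\hat\Omega_1\Sigma_1\hat\Omega_1)_{ll}\le\|\hat\Omega_1\|^2W=O(1)$. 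Hence $\hat A_{\{l\},H_2}b^{(k)}=m_l^{(k)}+W_l^{(k)}$ with $m_l^{(k)}=\frac1n\sum_i\mu_{l,i}(\tilde Y_i)_{H_2}^{T}b^{(k)}$ ($\mathcal G$-measurable) and $W_l^{(k)}=\frac1n\sum_iZ_{l,i}(\tilde Y_i)_{H_2}^{T}b^{(k)}$. Given $\mathcal G$, $W_l^{(k)}$ is a fixed linear combination of the i.i.d.\ Gaussians $Z_{l,i}$, so $W_l^{(k)}\mid\mathcal G\sim N\!\big(0,\frac{\mathrm{Var}(Z_{l,1})}{n}(b^{(k)})^{T}\hat S_{Y,H_2}b^{(k)}\big)$ where $\hat S_{Y,H_2}=\frac1n\sum_i(\tilde Y_i)_{H_2}(\tilde Y_i)_{H_2}^{T}$. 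Since $|H_2|\le\|\beta\|^2n/(\delta_2^2\log p_2)$ is of smaller order than $n$, standard Gaussian covariance concentration keeps $\|\hat S_{Y,H_2}\|$ below a constant on a $\mathcal G$-measurable event $\mathcal E$ of probability $1-O(p^{-2})$, on which the conditional variance of $W_l^{(k)}$ is $O(1/n)$; a Gaussian tail bound and a union bound over the $\le p_1$ values of $l$ and the $K=O(1)$ values of $k$ then give $\max_{l,k}|W_l^{(k)}|\le c\sqrt{\log p/n}$ with conditional probability $1-O(p^{-2})$, and integrating over $\mathcal E$ absorbs this into $\gamma_1$ once its constant is large enough.

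What remains is the $\mathcal G$-measurable term $m_l^{(k)}$, and this is where the main difficulty lies. From the explicit regression, $m_l^{(k)}=c_l^{T}\hat A_{H_1H_2}b^{(k)}+d_l^{T}\hat S_{Y,H_2}b^{(k)}$, and subtracting the already-controlled $\lambda\alpha_l(\beta_{H_2}^{T}b^{(k)})$ leaves $\mathbb E[E_{\{l\},H_2}\mid\mathcal G]b^{(k)}$. The single-canonical-pair structure $\mathbb E\hat A=\lambda\alpha\beta^{T}$ makes this collapse: taking unconditional expectations in $\mathbb E[\hat A_{\{l\},H_2}\mid\mathcal G]=c_l^{T}\hat A_{H_1H_2}+d_l^{T}\hat S_{Y,H_2}$ forces $d_l=\lambda(\alpha_l-c_l^{T}\alpha_{H_1})(\tilde\Sigma_{YY})_{H_2H_2}^{-1}\beta_{H_2}$ (with $\tilde\Sigma_{YY}=\hat\Omega_2\Sigma_2\hat\Omega_2$), and substituting $\hat A_{H_1H_2}=\lambda\alpha_{H_1}\beta_{H_2}^{T}+E_{H_1H_2}$, $\hat S_{Y,H_2}=(\tilde\Sigma_{YY})_{H_2H_2}+F$ cancels all mean-level terms, leaving $\mathbb E[E_{\{l\},H_2}\mid\mathcal G]b^{(k)}=c_l^{T}E_{H_1H_2}b^{(k)}+d_l^{T}Fb^{(k)}$. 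The second summand is $\le\|d_l\|\|F\|\lesssim|\alpha_l-c_l^{T}\alpha_{H_1}|\cdot\|F\|$, and the first is handled by the same conditional-Gaussian device applied now with conditioning field $\sigma(\{(\tilde Y_i)_{H_2}\}_i)$ (relative to which $E_{H_1H_2}b^{(k)}$ is a $\sqrt{\log p/n}$-scale fluctuation times $\|c_l\|$ and $\|\hat A_{H_1H_2}\|=O(1)$), together with a union over the $K=O(1)$ iterates. I expect the genuine obstacle to be quantifying $|\alpha_l-c_l^{T}\alpha_{H_1}|$ and the analogous factor in $c_l^{T}E_{H_1H_2}b^{(k)}$, i.e.\ showing that a coordinate $l\in L_1$, defined purely by $|\alpha_l|<\delta_1\sqrt{\log p_1/n}$, also has a suitably small regression footprint on the strong-signal block --- the one point where only that smallness and the model structure are available. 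Once this is in place, the two asserted inequalities hold for all $k\le K$ with probability $1-O(p^{-2})$, and the mathematical induction feeding \eqref{eq:thresh1}--\eqref{eq:thresh2} back into Algorithm~\ref{alg:: ITSCCA}, together with the numerical matching of $(\delta_1,\delta_2,c_1,c_2)$ against $\min_{ij}t_{ij}$, is routine.
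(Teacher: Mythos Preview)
Your decomposition via conditional Gaussian regression on $\mathcal G=\sigma(\{(\tilde X_i)_{H_1},(\tilde Y_i)_{H_2}\})$ is where the argument breaks down, and you correctly flag the unresolved point yourself. The residual piece $W_l^{(k)}$ is fine, but the $\mathcal G$-measurable part $m_l^{(k)}$ produces $c_l^{T}E_{H_1H_2}b^{(k)}$ and $d_l^{T}Fb^{(k)}$, and neither is under control. The coefficients $c_l,d_l$ come from the covariance structure $\hat\Omega_1\Sigma_1\hat\Omega_1$ and $\hat\Omega_2\Sigma_2\hat\Omega_2$; membership of $l$ in $L_1$ only says $|\alpha_l|$ is small, it says nothing about how strongly the $l$-th coordinate of $\tilde X$ is correlated with the $H_1$-block, so $\|c_l\|$ and $|\alpha_l-c_l^{T}\alpha_{H_1}|$ can be order one. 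Moreover your proposed fix for $c_l^{T}E_{H_1H_2}b^{(k)}$ --- conditioning on $\sigma(\{(\tilde Y_i)_{H_2}\})$ --- does not work, because $b^{(k)}$ depends on $\hat A_{H_1H_2}$ and hence on $(\tilde X_i)_{H_1}$ as well; $b^{(k)}$ is not measurable with respect to that smaller field, so $E_{H_1H_2}b^{(k)}$ is not conditionally Gaussian there.

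The paper sidesteps all of this by using the latent-variable representation $\tilde X_i=\sqrt\lambda\,\alpha Z_i+X_i'$, $\tilde Y_i=\sqrt\lambda\,\beta Z_i+Y_i'$ with $Z_i,X_i',Y_i'$ mutually independent (Lemma~\ref{lem:latent}). Expanding $\hat A_{L_1H_2}b^{(k)}$ through this gives four terms (Lemma~\ref{lem:3rddeterm}); three are controlled uniformly in $b^{(k)}$ by Lemmas~\ref{lem:concen1}--\ref{lem:concen3}, and the only term touching $b^{(k)}$ is $\frac1n\sum_iX_{i,L_1}'Y_{i,H_2}'^{T}b^{(k)}$. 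The key point is that $X_{L_1}'$ is independent of $(Z,X_{H_1}',Y_{H_2}')$ and hence of $\hat A_{H_1H_2}$, so $b^{(k)}$ and $Y_{H_2}'$ are jointly independent of $X_{L_1}'$. Conditioning on them, each coordinate is exactly standard normal (after normalization), and a union bound over $|L_1|\le p_1$ coordinates and $K=O(1)$ iterates finishes. This is the missing idea: the latent factorization manufactures the right independence between the $L_1$-noise and everything that $b^{(k)}$ can see, so no regression coefficients ever enter.
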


\subsection{Proof of Theorem \protect\ref{thm:main}}

\begin{proof}[Proof of Theorem \ref{thm:main}]
In the following proof, $C$ denotes a generic constant which may vary from line to line.
Without loss of generality, we prove convergence of $\alpha^{(k)}$. By Lemma \ref{lem:same0} and Lemma \ref{lem:same}, $\alpha^{(k)}=\alpha^{(k),ora}$ for all $k=1,2,...,K$ with probability $1-O(p^{-2})$. Hence, it is sufficient to prove convergence of $\alpha^{(k),ora}$. Lemma \ref{lem:iteration} implies that for $k=2,3,...,K$,
\begin{eqnarray*}
L(\alpha^{(k),ora},\hat{\alpha}^{ora})^2 &\leq& \max\Bigg(4\left|\frac{\hat{l}_2}{\hat{l}_1}\right|^2\frac{64\gamma_2^2|H_2|}{|\hat{l}_1|^2}+\frac{64\gamma_1^2|H_1|}{|\hat{l}_1|^2},\Big(32\left|\frac{\hat{l}_2}{\hat{l}_1}\right|^4\Big)^{[k/2]}\Bigg) \\
&\leq& \max\Bigg(4\left|\frac{\hat{l}_2}{\hat{l}_1}\right|^2\frac{64\gamma_2^2|H_2|}{|\hat{l}_1|^2}+\frac{64\gamma_1^2|H_1|}{|\hat{l}_1|^2},32\left|\frac{\hat{l}_2}{\hat{l}_1}\right|^4\Bigg)
\end{eqnarray*}
According to Lemma \ref{lem:eigengap}, we have
$$
32\left|\frac{\hat{l}_2}{\hat{l}_1}\right|^4 \leq C\Bigg(s\Big(\frac{\log p}{n}\Big)^{1-q/2}+||\theta-\alpha||^2\vee||\beta-\eta||^2\Bigg)^2.
$$
We also have
\begin{eqnarray*}
&& 4\left|\frac{\hat{l}_2}{\hat{l}_1}\right|^2\frac{64\gamma_2^2|H_2|}{|\hat{l}_1|^2}+\frac{64\gamma_1^2|H_1|}{|\hat{l}_1|^2} \\
&\leq& C\Big(\gamma_2^2|H_2|+\gamma_1^2|H_1|\Big) \\
&\leq& C\frac{\log p}{n}\Bigg(s\Big(\frac{\log p}{n}\Big)^{-q/2}+\Big(\frac{\log p}{n}\Big)^{-1}||\theta-\alpha||^2\Bigg) \\
&\leq& C\Bigg(s\Big(\frac{\log p}{n}\Big)^{1-q/2}+||\theta-\alpha||^2\Bigg).
\end{eqnarray*}
Hence, the desired bound holds for $k=2,3,...,K$.
For $k=1$, by Lemma \ref{lem:iteration}, we have
$$L({\alpha}^{(1),ora},\hat{\alpha}^{ora})^2\leq 4\left|\frac{\hat{l}_2}{\hat{l}_1}\right|^2+\frac{32\gamma_1^2|H_1|}{|\hat{l}_1|^2}\leq C\Bigg(s\Big(\frac{\log p}{n}\Big)^{1-q/2}+||\theta-\alpha||^2\Bigg).$$
Therefore, we have proved the bound of $L(\alpha^{(k),ora},\alpha^{ora})^2$ for all $k=1,2,...,K$. Combining this result and Lemma \ref{lem:approxbias}, we have
$$L({\alpha}^{(k),ora},{\alpha})^2\leq C\Bigg(s\Big(\frac{\log p}{n}\Big)^{1-q/2}+||\theta-\alpha||^2\vee||\beta-\eta||^2\Bigg),$$
for $k=1,2,...,K$. The final bound follows from the triangle inequality applied to the equation above and the fact that
$$L(\alpha,\theta)\leq C \Big(||\alpha-\theta||\wedge ||\alpha+\theta||\Big)\leq C||\alpha-\theta||.$$
The same analysis applies for $L({\beta}^{(k),ora},{\eta})^2$. Thus, the result is obtained conditioning on $$(X_{n+1},Y_{n+1}),...,(X_{2n},Y_{2n}).$$ Since we assume $\xi_{\Omega}=o(1)$ with probability at least $1-O(p^{-2})$, the unconditional result also holds.
\end{proof}

\clearpage

\section*{Appendix}
\appendix

\section{Technical Lemmas}

We define the high-signal and low-signal set of $(\theta,\eta)$ by 
\begin{equation*}
H^{\prime}_1=\left\{k: |\theta_k|\geq \frac{1}{2}\delta_1\sqrt{\frac{\log p_1%
}{n}}\right\},\quad H^{\prime}_2=\left\{k: |\eta_k|\geq \frac{1}{2}\delta_2%
\sqrt{\frac{\log p_2}{n}}\right\},
\end{equation*}
and $L_1^{\prime}=\{1,...,p_1\}-H_1^{\prime}$ and $L_2^{\prime}=\{1,...,p_2%
\}-H_2^{\prime}$.

\begin{lemma}
\label{lem:cardinality} We have 
\begin{equation*}
|H^{\prime}_1|\leq (\delta_1/2)^{-q}s_1\Bigg(\frac{\log p_1}{n}\Bigg)%
^{-q/2},\quad |H^{\prime}_2|\leq (\delta_2/2)^{-q}s_2\Bigg(\frac{\log p_2}{n}%
\Bigg)^{-q/2},
\end{equation*}
\begin{equation*}
|H_1|\leq (\delta_1/2)^{-q}s_1\Bigg(\frac{\log p_1}{n}\Bigg)%
^{-q/2}+(\delta_1/2)^{-2}\Bigg(\frac{\log p_1}{n}\Bigg)^{-1}||\theta-%
\alpha||^2,
\end{equation*}
\begin{equation*}
|H_2|\leq (\delta_2/2)^{-q}s_2\Bigg(\frac{\log p_2}{n}\Bigg)%
^{-q/2}+(\delta_2/2)^{-2}\Bigg(\frac{\log p_2}{n}\Bigg)^{-1}||\eta-\beta||^2,
\end{equation*}
\begin{equation*}
|L_1-L_1^{\prime}|=|H^{\prime}_1-H_1|\leq 2(\delta_1/2)^{-q}s_1\Bigg(\frac{%
\log p_1}{n}\Bigg)^{-q/2}+(\delta_1/2)^{-2}\Bigg(\frac{\log p_1}{n}\Bigg)%
^{-1}||\theta-\alpha||^2,
\end{equation*}
\begin{equation*}
|L_2-L_2^{\prime}|=|H^{\prime}_2-H_2|\leq 2(\delta_2/2)^{-q}s_2\Bigg(\frac{%
\log p_2}{n}\Bigg)^{-q/2}+(\delta_2/2)^{-2}\Bigg(\frac{\log p_2}{n}\Bigg)%
^{-1}||\eta-\beta||^2.
\end{equation*}
\end{lemma}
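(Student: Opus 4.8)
The plan is to derive all five bounds from two elementary ingredients: (i) a level-counting argument for weak $\ell_q$ sequences applied to $\theta$ and $\eta$ via Assumption A, and (ii) the observation that the thresholds defining $H_1$ and $H_1'$ (and likewise $H_2$ and $H_2'$) differ by exactly a factor of two, so that any index on which the $\alpha$-truncation and the $\theta$-truncation disagree must carry a coordinatewise discrepancy $|\alpha_k-\theta_k|$ of at least half the threshold. Write $\tau_1=\delta_1\sqrt{\log p_1/n}$ and $\tau_2=\delta_2\sqrt{\log p_2/n}$, so that $H_1=\{k:|\alpha_k|\ge\tau_1\}$, $H_1'=\{k:|\theta_k|\ge\tau_1/2\}$, and analogously for the second coordinate. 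All five statements are deterministic given $(\theta,\eta,\alpha,\beta)$; the probabilistic control of $\|\theta-\alpha\|$ and $\|\eta-\beta\|$ is supplied elsewhere.

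First I would bound $|H_1'|$: if $|H_1'|=m$, the $m$-th largest coordinate of $\theta$ in magnitude satisfies $|\theta_{(m)}|\ge\tau_1/2$, so Assumption A gives $(\tau_1/2)^q\le|\theta_{(m)}|^q\le s_1/m$, hence $m\le(\delta_1/2)^{-q}s_1(\log p_1/n)^{-q/2}$; the bound on $|H_2'|$ is identical. For $|H_1|$ I would split $H_1=(H_1\cap H_1')\cup(H_1\setminus H_1')$. The first set is contained in $H_1'$ and is bounded by the previous display. For the second, $k\in H_1\setminus H_1'$ forces $|\alpha_k|\ge\tau_1$ and $|\theta_k|<\tau_1/2$, so $|\alpha_k-\theta_k|>\tau_1/2$; since each such coordinate adds more than $(\tau_1/2)^2$ to $\|\theta-\alpha\|^2=\sum_k(\theta_k-\alpha_k)^2$, there can be at most $(\delta_1/2)^{-2}(\log p_1/n)^{-1}\|\theta-\alpha\|^2$ of them. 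Adding the two contributions yields the stated bound on $|H_1|$, and the same argument applied to $(\eta,\beta)$ gives $|H_2|$. For the last pair, note the set identity $L_1\setminus L_1'=H_1'\setminus H_1$; bounding $H_1'\setminus H_1\subseteq H_1'$ supplies the weak-$\ell_q$ term, and retaining the (already-derived) bound on $|H_1\setminus H_1'|$ — i.e. the slack needed both to dominate $|L_1'\setminus L_1|$ and to match the form of the $|H_1|$ bound — produces the claimed estimate with its factor of two; the $(\eta,\beta)$ version is the same.

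The argument has no substantive obstacle; the one point that genuinely requires care is that $\alpha$ is \emph{not} assumed to lie in a weak $\ell_q$ ball, so one cannot count its large coordinates directly — they must instead be compared with those of $\theta$, which is precisely why the thresholds for $H_1$ and $H_1'$ are taken a factor of two apart: this forces any disagreement between the two truncations to register as a discrepancy $|\alpha_k-\theta_k|\ge\tau_1/2$. Granting this comparison, everything collapses to the elementary level-counting inequality $|\{k:|\theta_k|\ge t\}|\le s_1 t^{-q}$ (valid for all $t>0$ by Assumption A) together with an elementary counting argument against $\|\theta-\alpha\|^2$, and the identical reasoning applied coordinatewise to $(\eta,\beta)$ produces the subscript-$2$ statements.
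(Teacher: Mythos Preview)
Your proof is correct and follows essentially the same approach as the paper's: the paper bounds $|H_1'|$ by the same level-counting argument, obtains $|H_1|$ via the inclusion $H_1\subset H_1'\cup\{k:|\alpha_k-\theta_k|\ge \tau_1/2\}$ (equivalent to your splitting $H_1=(H_1\cap H_1')\cup(H_1\setminus H_1')$), and handles $|H_1'\setminus H_1|$ by the crude bound $|H_1'\setminus H_1|\le |H_1'|+|H_1|$. Your explanation for the factor of two in the last display is a bit roundabout---the paper simply writes $|H_1'\setminus H_1|\le |H_1|+|H_1'|$ and plugs in both earlier bounds---but the content is the same.
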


For the transformed data $\{(\tilde{X}_i,\tilde{Y}_i)\}_{i=1}^n$, it has a
latent variable representation 
\begin{equation*}
\tilde{X}_i=\sqrt{\lambda}\alpha Z_i+X_i^{\prime},\quad \tilde{Y}_i=\sqrt{%
\lambda}\beta Z_i+ Y_i^{\prime},
\end{equation*}
where $Z_i,X_i^{\prime},Y_i^{\prime}$ are independent, $Z_i\sim N(0,1)$ and $%
X_i^{\prime}$ and $Y_i^{\prime}$ are Gaussian vectors.

\begin{lemma}
\label{lem:latent} The latent representation above exists in the sense that $%
\mbox{\rm Cov}(X^{\prime})\geq 0$ and $\mbox{\rm Cov}(Y^{\prime})\geq 0$.
Moreover, we have 
\begin{equation*}
||\mbox{\rm Cov}(X^{\prime})||\leq \big(1+o(1)\big)||\hat{\Omega}_1||,\quad%
\mbox{\rm and}\quad ||\mbox{\rm Cov}(Y^{\prime})||\leq \big(1+o(1)\big)||%
\hat{\Omega}_2||.
\end{equation*}
\end{lemma}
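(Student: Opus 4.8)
The plan is to derive a latent-variable representation for the \emph{original} pair $(X_i,Y_i)$ and then push it forward through the maps $\hat\Omega_1,\hat\Omega_2$, which become deterministic once we condition on the second half of the sample. First I would verify that $(X_i,Y_i)$ drawn from the single canonical pair model \eqref{eq:model} can be written as $X_i=\sqrt\lambda\,\Sigma_1\theta\,Z_i+X_i''$ and $Y_i=\sqrt\lambda\,\Sigma_2\eta\,Z_i+Y_i''$, where $Z_i\sim N(0,1)$, $X_i'',Y_i''$ are centered Gaussian vectors, and $Z_i,X_i'',Y_i''$ are mutually independent. Since all these quantities are Gaussian with mean zero, it suffices to match covariances: with $a=\sqrt\lambda\,\Sigma_1\theta$ and $b=\sqrt\lambda\,\Sigma_2\eta$ we have $ab^T=\lambda\Sigma_1\theta\eta^T\Sigma_2$, so the only nontrivial requirement is that $\mathrm{Cov}(X'')=\Sigma_1-aa^T=\Sigma_1-\lambda\Sigma_1\theta\theta^T\Sigma_1$ and $\mathrm{Cov}(Y'')=\Sigma_2-\lambda\Sigma_2\eta\eta^T\Sigma_2$ be genuine covariance matrices. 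Positive semidefiniteness of $\mathrm{Cov}(X'')$ follows from Cauchy--Schwarz in the $\Sigma_1$-inner product: for every $v$, $(v^T\Sigma_1\theta)^2\le(v^T\Sigma_1v)(\theta^T\Sigma_1\theta)=v^T\Sigma_1v$, hence $v^T\mathrm{Cov}(X'')v\ge(1-\lambda)\,v^T\Sigma_1v\ge0$ using $\theta^T\Sigma_1\theta=1$ and $\lambda\le1$; the same computation handles $Y''$.

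Next, conditioning on $(X_{n+1},Y_{n+1}),\dots,(X_{2n},Y_{2n})$ makes $\hat\Omega_1,\hat\Omega_2$ fixed matrices, so applying them to the representation above gives $\tilde X_i=\hat\Omega_1X_i=\sqrt\lambda\,\alpha Z_i+X_i'$ and $\tilde Y_i=\hat\Omega_2Y_i=\sqrt\lambda\,\beta Z_i+Y_i'$ with $\alpha=\hat\Omega_1\Sigma_1\theta$, $\beta=\hat\Omega_2\Sigma_2\eta$, $X_i'=\hat\Omega_1X_i''$, and $Y_i'=\hat\Omega_2Y_i''$. Independence of $Z_i,X_i',Y_i'$ and joint normality are inherited from the previous step, because linear images of independent Gaussians are independent Gaussians. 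Moreover $\mathrm{Cov}(X')=\hat\Omega_1\,\mathrm{Cov}(X'')\,\hat\Omega_1^T$ is positive semidefinite as a congruence transform of a positive semidefinite matrix, and likewise for $\mathrm{Cov}(Y')$; this establishes the existence part of the claim.

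For the spectral-norm bounds, I would use $\mathrm{Cov}(X')=\hat\Omega_1\Sigma_1\hat\Omega_1^T-\lambda\alpha\alpha^T\preceq\hat\Omega_1\Sigma_1\hat\Omega_1^T$, since $\lambda\alpha\alpha^T\succeq0$. Because $\mathrm{Cov}(X')\succeq0$, this semidefinite ordering forces $\|\mathrm{Cov}(X')\|\le\|\hat\Omega_1\Sigma_1\hat\Omega_1^T\|$; writing $\hat\Omega_1\Sigma_1\hat\Omega_1^T=\hat\Omega_1(\Sigma_1\hat\Omega_1^T-I)+\hat\Omega_1$ and noting that $\Sigma_1\hat\Omega_1^T-I=(\hat\Omega_1\Sigma_1-I)^T$ has spectral norm at most $\xi_\Omega$ on the event we have conditioned on, we obtain $\|\mathrm{Cov}(X')\|\le\|\hat\Omega_1\|(1+\xi_\Omega)=(1+o(1))\|\hat\Omega_1\|$, and symmetrically for $Y'$. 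There is essentially no obstacle here; the only points requiring care are (i) recording that the latent representation exists precisely because $\lambda\le1$, which is what makes $\mathrm{Cov}(X'')$ and $\mathrm{Cov}(Y'')$ positive semidefinite, and (ii) keeping the semidefinite ordering in the right direction so that dropping the rank-one term $-\lambda\alpha\alpha^T$ only increases the norm. If the precision estimators are not symmetrized, the same computation goes through verbatim provided $\hat\Omega_1^T$ is used consistently wherever a transpose appears.
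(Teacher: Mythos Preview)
Your argument is correct and matches the paper's proof in all essentials: both reduce positive semidefiniteness to the fact that $\theta^T\Sigma_1\theta=1$ together with $\lambda\le 1$, and both bound $\|\mathrm{Cov}(X')\|$ by dropping the nonnegative rank-one term $\lambda\alpha\alpha^T$ and then using $\|\hat\Omega_1\Sigma_1-I\|=o(1)$. The only cosmetic difference is that the paper works directly with the transformed data, writing $\mathrm{Cov}(X')=\hat\Omega_1\Sigma_1^{1/2}\bigl(I-\lambda\Sigma_1^{1/2}\theta\theta^T\Sigma_1^{1/2}\bigr)\Sigma_1^{1/2}\hat\Omega_1$ and observing that the middle factor is PSD because $\|\Sigma_1^{1/2}\theta\|=1$, whereas you first establish the latent decomposition for $(X_i,Y_i)$ via Cauchy--Schwarz in the $\Sigma_1$-inner product and then push forward; these are equivalent formulations of the same inequality.
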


\begin{lemma}[\protect\cite{johnstone2001thresholding}]
\label{lem:concen1} Given $Z_1,...,Z_n$ i.i.d. $N(0,1)$. For each $t\in
(0,1/2)$, 
\begin{equation*}
\mathbb{P}\Bigg(\left|\frac{1}{n}\sum_{i=1}^nZ_i^2-1\right|>t\Bigg)\leq 2\exp%
\Big(-\frac{3nt^2}{16}\Big).
\end{equation*}
\end{lemma}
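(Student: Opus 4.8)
The plan is to treat this as a standard Chernoff-bound / moment-generating-function argument for the chi-squared variable $S_n=\sum_{i=1}^n Z_i^2\sim\chi^2_n$, splitting the two-sided event into the upper tail $\{S_n>n(1+t)\}$ and the lower tail $\{S_n<n(1-t)\}$ and bounding each by $\exp(-3nt^2/16)$, after which a union bound gives the factor $2$.

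First I would record the moment generating function $\mathbb{E}\,e^{\lambda Z_i^2}=(1-2\lambda)^{-1/2}$ for $\lambda<1/2$, so that by independence $\mathbb{E}\,e^{\lambda S_n}=(1-2\lambda)^{-n/2}$. For the upper tail, Markov's inequality applied to $e^{\lambda S_n}$ with $\lambda\in(0,1/2)$ gives $\mathbb{P}(S_n>n(1+t))\le \exp\big(-n[\lambda(1+t)+\tfrac12\log(1-2\lambda)]\big)$; optimizing in $\lambda$ (the minimizer is $\lambda=\tfrac{t}{2(1+t)}$, which is automatically $<1/2$) yields $\mathbb{P}(S_n>n(1+t))\le\exp\big(-\tfrac n2(t-\log(1+t))\big)$. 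For the lower tail I would instead apply Markov to $e^{-\lambda S_n}$ with $\lambda>0$, obtaining $\mathbb{P}(S_n<n(1-t))\le\exp\big(-n[-\lambda(1-t)+\tfrac12\log(1+2\lambda)]\big)$, and the optimal $\lambda=\tfrac{t}{2(1-t)}$ produces $\mathbb{P}(S_n<n(1-t))\le\exp\big(-\tfrac n2(-t-\log(1-t))\big)$.

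It then remains to verify the two elementary inequalities $t-\log(1+t)\ge\tfrac{3t^2}{8}$ and $-t-\log(1-t)\ge\tfrac{3t^2}{8}$ for $t\in(0,1/2)$, since $\tfrac n2\cdot\tfrac{3t^2}{8}=\tfrac{3nt^2}{16}$ and then $\mathbb{P}(|S_n/n-1|>t)\le 2\exp(-3nt^2/16)$. The lower-tail inequality is immediate, because $-t-\log(1-t)=\sum_{k\ge 2}t^k/k\ge t^2/2>3t^2/8$. The hard part will be the upper-tail inequality, which is genuinely tight: setting $f(t)=t-\log(1+t)-\tfrac{3t^2}{8}$ one computes $f(0)=0$ and $f'(t)=\tfrac{t(1-3t)}{4(1+t)}$, so $f$ increases on $(0,1/3)$ and decreases on $(1/3,1/2)$; hence $f\ge 0$ on $[0,1/2]$ provided $f(1/2)\ge 0$, and a direct numerical check gives $f(1/2)=\tfrac12-\log\tfrac32-\tfrac{3}{32}>0$ — but only by a small margin, which is exactly why the lemma is restricted to $t<1/2$. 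This last near-critical check is the one delicate point; every other step is routine.
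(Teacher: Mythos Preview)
The paper does not supply its own proof of this lemma; it simply quotes the result from \cite{johnstone2001thresholding} and uses it as a black box. Your Chernoff argument is correct and is the standard route: the MGF optimization, the resulting exponents $\tfrac n2(t-\log(1+t))$ and $\tfrac n2(-t-\log(1-t))$, and the reduction to the elementary inequalities $t-\log(1+t)\ge 3t^2/8$ and $-t-\log(1-t)\ge 3t^2/8$ on $(0,1/2)$ are all right, including the derivative analysis $f'(t)=t(1-3t)/(4(1+t))$ and the boundary check $f(1/2)=\tfrac12-\log\tfrac32-\tfrac{3}{32}\approx 7.5\times 10^{-4}>0$. Your remark that this near-vanishing value is what pins the range to $t<1/2$ is apt; for larger $t$ the constant $3/16$ would have to be weakened.
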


\begin{lemma}
\label{lem:concen2} Let $X_{H_1}^{\prime}$ be an $n\times|H_1|$ matrix with $%
X_{i,H_1}^{\prime}$ being the $i$-th row and $Y_{H_2}^{\prime}$ be an $%
n\times|H_2|$ matrix with $Y_{i,H_2}^{\prime}$ being the $i$-th row. We have
for any $t>0$, 
\begin{equation*}
\mathbb{P}\Bigg(||X_{H_1}^{\prime T}X_{H_1}^{\prime}||> 1.01||{\hat{\Omega}}%
_1||\Big(n+2\big(\sqrt{n|H_1|}+nt\big)+\big(\sqrt{|H_1|}+\sqrt{n}t\big)^2%
\Big)\Bigg)\leq 2e^{-nt^2/2},
\end{equation*}
\begin{equation*}
\mathbb{P}\Bigg(||Y_{H_2}^{\prime T}Y_{H_2}^{\prime}||>1.01||{\hat{\Omega}}%
_2||\Big(n+2\big(\sqrt{n|H_2|}+nt\big)+\big(\sqrt{|H_2|}+\sqrt{n}t\big)^2%
\Big)\Bigg)\leq 2e^{-nt^2/2},
\end{equation*}
\begin{equation*}
\mathbb{P}\Bigg(||X_{H_1}^{\prime T}Y_{H_2}^{\prime}||> 1.03||\hat{\Omega}%
_1||^{1/2}||\hat{\Omega}_2||^{1/2}\Big(\sqrt{|H_1|n}+\sqrt{|H_2|n}+t\sqrt{n}%
\Big)\Bigg)\leq \big(|H_1|\wedge|H_2|\big)e^{-3n/64}+e^{-t^2/2}.
\end{equation*}
\end{lemma}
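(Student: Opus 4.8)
All three inequalities reduce, via Lemma \ref{lem:latent}, to concentration statements about matrices with i.i.d.\ $N(0,1)$ entries. By Lemma \ref{lem:latent}, the rows of $X'_{H_1}$ are i.i.d.\ centered Gaussian with covariance $\Gamma_1:=\mathrm{Cov}(X'_{H_1})$ satisfying $\|\Gamma_1\|\le\|\mathrm{Cov}(X')\|\le(1+o(1))\|\hat{\Omega}_1\|$ (a principal submatrix has no larger spectral norm), and likewise the rows of $Y'_{H_2}$ have covariance $\Gamma_2$ with $\|\Gamma_2\|\le(1+o(1))\|\hat{\Omega}_2\|$; crucially $X'_{H_1}$ and $Y'_{H_2}$ are independent, since $X'$ and $Y'$ are independent in the latent representation. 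Hence we may write $X'_{H_1}=Z_1\Gamma_1^{1/2}$ and $Y'_{H_2}=Z_2\Gamma_2^{1/2}$, where $Z_1\in\mathbb{R}^{n\times|H_1|}$ and $Z_2\in\mathbb{R}^{n\times|H_2|}$ are \emph{independent} matrices of i.i.d.\ $N(0,1)$ entries. Besides Lemma \ref{lem:concen1}, the only tool needed is the standard deviation bound for the top singular value of a Gaussian matrix: if $G$ is $m\times k$ with i.i.d.\ $N(0,1)$ entries then $\mathbb{P}(\sigma_{\max}(G)>\sqrt{m}+\sqrt{k}+s)\le e^{-s^2/2}$, which follows from Gaussian concentration applied to the $1$-Lipschitz map $G\mapsto\sigma_{\max}(G)$.

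\noindent\textbf{The two Wishart-type bounds.} Since $X'^{T}_{H_1}X'_{H_1}=\Gamma_1^{1/2}Z_1^{T}Z_1\Gamma_1^{1/2}$, we have $\|X'^{T}_{H_1}X'_{H_1}\|\le\|\Gamma_1\|\,\sigma_{\max}(Z_1)^2$. Applying the singular-value bound to $Z_1$ with $s=t\sqrt{n}$ gives $\sigma_{\max}(Z_1)\le\sqrt{n}+\sqrt{|H_1|}+t\sqrt{n}$ off an event of probability at most $e^{-nt^2/2}$, and squaring the right-hand side reproduces exactly $n+2(\sqrt{n|H_1|}+nt)+(\sqrt{|H_1|}+\sqrt{n}t)^2$; the constant $1.01$ absorbs the $(1+o(1))$ from $\|\Gamma_1\|\le(1+o(1))\|\hat{\Omega}_1\|$, with slack to spare for the factor $2$ in the stated probability. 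The bound for $Y'^{T}_{H_2}Y'_{H_2}$ is verbatim with $(Z_1,|H_1|,\hat{\Omega}_1)$ replaced by $(Z_2,|H_2|,\hat{\Omega}_2)$.

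\noindent\textbf{The cross-covariance bound.} Here independence of $Z_1$ and $Z_2$ is the essential ingredient, and $\|X'^{T}_{H_1}Y'_{H_2}\|\le\|\Gamma_1\|^{1/2}\|\Gamma_2\|^{1/2}\,\|Z_1^{T}Z_2\|$ reduces matters to bounding $\|Z_1^{T}Z_2\|$. Assume without loss of generality $|H_1|\wedge|H_2|=|H_2|$ and write $\|Z_1^{T}Z_2\|=\sup_{v\in S^{|H_2|-1}}\|Z_1^{T}Z_2v\|$, passing to a fixed finite net of $S^{|H_2|-1}$ so that it suffices to control $\|Z_1^{T}Z_2v\|$ at each net point $v$. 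For such a $v$, $Z_2v\sim N(0,I_n)$ is independent of $Z_1$, so conditionally on $Z_2v$ the vector $Z_1^{T}(Z_2v)$ is $N(0,\|Z_2v\|^2 I_{|H_1|})$, whence $\|Z_1^{T}(Z_2v)\|=\|Z_2v\|\cdot\|h_v\|$ with $h_v\sim N(0,I_{|H_1|})$. Lemma \ref{lem:concen1} with deviation parameter $\tfrac12$ gives $\|Z_2v\|^2\le\tfrac32 n$ off an event of probability $\le 2e^{-3n/64}$ per net point, and $\|h_v\|\le\sqrt{|H_1|}+\sqrt{|H_2|}+t$ uniformly over the net off probability $e^{-t^2/2}$; the union bound over the net (of size controlled by $|H_1|\wedge|H_2|$, which is small in the regime of Assumption A) produces the $(|H_1|\wedge|H_2|)e^{-3n/64}$ term. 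Combining gives $\|Z_1^{T}Z_2\|\lesssim\sqrt{n}\,(\sqrt{|H_1|}+\sqrt{|H_2|}+t)$, and restoring the covariance prefactors $\|\Gamma_1\|^{1/2}\|\Gamma_2\|^{1/2}\le(1+o(1))\|\hat{\Omega}_1\|^{1/2}\|\hat{\Omega}_2\|^{1/2}$ together with careful bookkeeping of the net constant yields the stated bound with constant $1.03$.

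\noindent\textbf{Main obstacle.} The two Wishart bounds are essentially bookkeeping around the Gaussian singular-value inequality. The cross-covariance bound is the delicate part: the crude estimate $\|Z_1^{T}Z_2\|\le\sigma_{\max}(Z_1)\sigma_{\max}(Z_2)$ produces a term of order $n$ rather than the required order $\sqrt{n}(\sqrt{|H_1|}+\sqrt{|H_2|})$, so one must genuinely exploit the independence of $Z_1$ and $Z_2$ --- paying only $\sqrt{|H_1|}+\sqrt{|H_2|}$ for the conditionally Gaussian part and only $\sqrt{n}$ for the $\chi^2$ part. Getting this reduction clean, and matching both the probability bound $(|H_1|\wedge|H_2|)e^{-3n/64}+e^{-t^2/2}$ and the explicit constant $1.03$, is where the real care lies.
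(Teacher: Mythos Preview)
Your reduction is exactly the paper's: factor $X'_{H_1}=Z_1\Gamma_1^{1/2}$, $Y'_{H_2}=Z_2\Gamma_2^{1/2}$ via Lemma~\ref{lem:latent}, bound $\|\Gamma_i\|\le(1+o(1))\|\hat\Omega_i\|$, and reduce to concentration for i.i.d.\ Gaussian matrices. The paper then simply cites Propositions~D.1 and~D.2 of \cite{ma2013sparse} for the bounds on $\|Z_1^TZ_1\|$ and $\|Z_1^TZ_2\|$; you instead try to reprove those. Your Wishart argument (Davidson--Szarek applied to $\sigma_{\max}(Z_1)$, then square) is correct and matches what Proposition~D.1 gives.

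The gap is in the cross term. An $\epsilon$-net of $S^{|H_2|-1}$ has cardinality of order $(1+2/\epsilon)^{|H_2|}$, which is \emph{exponential} in $|H_2|$, not ``controlled by $|H_1|\wedge|H_2|$'' as you write. With the correct net size, the union bound for the $\chi^2$ part produces $C^{|H_2|}e^{-3n/64}$ rather than $(|H_1|\wedge|H_2|)e^{-3n/64}$, and the net also introduces a multiplicative factor (typically $2$) in front of $\|Z_1^TZ_2\|$ together with the $\sqrt{3/2}$ from the $\chi^2$ bound --- so neither the probability nor the constant $1.03$ survives. Your decomposition $\|Z_1^TZ_2v\|=\|Z_2v\|\,\|h_v\|$ is fine for a \emph{fixed} $v$, but taking the supremum over $v$ via a net is where the bookkeeping breaks. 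To recover the exact statement you should invoke Proposition~D.2 of \cite{ma2013sparse} directly (as the paper does); its proof uses a sharper device than a raw sphere net to obtain the polynomial prefactor $|H_1|\wedge|H_2|$ and the constant $1.01$ on $\|Z_1^TZ_2\|$.
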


\begin{lemma}
\label{lem:concen3} We have for any $t>0$, 
\begin{equation*}
\mathbb{P}\Bigg(\left\|\sum_{i=1}^n Z_i X_{i,H_1}^{\prime}\right\|>1.03 ||%
\hat{\Omega}_1||^{1/2}\Big((t+1)\sqrt{n}+\sqrt{|H_1|n}\Big)\Bigg)\leq
e^{-3n/64}+e^{-t^2/2},
\end{equation*}
\begin{equation*}
\mathbb{P}\Bigg(\left\|\sum_{i=1}^n Z_i Y_{i,H_2}^{\prime}\right\|>1.03 ||%
\hat{\Omega}_2||^{1/2}\Big((t+1)\sqrt{n}+\sqrt{|H_2|n}\Big)\Bigg)\leq
e^{-3n/64}+e^{-t^2/2},
\end{equation*}
\begin{equation*}
\mathbb{P}\Bigg(\left\|\sum_{i=1}^n
Z_iX_{i,L_1}^{\prime}\right\|_{\infty}>1.03 ||\hat{\Omega}_1||^{1/2}(t+2)%
\sqrt{n}\Bigg)\leq |L_1|\Big(e^{-3n/64}+e^{-t^2/2}\Big),
\end{equation*}
\begin{equation*}
\mathbb{P}\Bigg(\left\|\sum_{i=1}^n
Z_iY_{i,L_2}^{\prime}\right\|_{\infty}>1.03 ||\hat{\Omega}_2||^{1/2}(t+2)%
\sqrt{n}\Bigg)\leq |L_2|\Big(e^{-3n/64}+e^{-t^2/2}\Big).
\end{equation*}
\end{lemma}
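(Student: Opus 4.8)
The plan is to reduce all four tail bounds to tail bounds for Gaussian vectors (or single Gaussian coordinates) by conditioning on the latent variables $z=(Z_1,\dots,Z_n)^T$. Recall from the latent-variable decomposition preceding Lemma \ref{lem:latent} that, conditionally on the second half of the sample, the triples $(Z_i,X'_i,Y'_i)$ are i.i.d., with $Z_i\sim N(0,1)$ independent of the Gaussian vectors $X'_i,Y'_i$, and with $\|\mathrm{Cov}(X')\|\le(1+o(1))\|\hat\Omega_1\|$ and $\|\mathrm{Cov}(Y')\|\le(1+o(1))\|\hat\Omega_2\|$ by Lemma \ref{lem:latent}. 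Each of the four quantities is of the form $(M')^Tz$, or a coordinate of such a vector, where $M'$ is the relevant Gaussian data matrix, independent of $z$; hence conditionally on $z$ it is \emph{exactly} Gaussian, with covariance $\|z\|^2$ times a principal submatrix of $\mathrm{Cov}(X')$ or $\mathrm{Cov}(Y')$. This conditional Gaussianity is the only place the data-splitting / latent structure is used.

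First I would isolate the event $\mathcal E_z=\{\|z\|^2\le (1+t_0)n\}$: Lemma \ref{lem:concen1}, used one-sidedly, gives $\mathbb P(\mathcal E_z^c)\le e^{-3nt_0^2/16}$, and the choice $t_0=\tfrac12$ produces the $e^{-3n/64}$ term appearing in the statement. For any event $\mathcal B$ of interest I would then use $\mathbb P(\mathcal B)=\mathbb E\big[\mathbb P(\mathcal B\mid z)\big]\le \mathbb P(\mathcal E_z^c)+\sup_{z\in\mathcal E_z}\mathbb P(\mathcal B\mid z)$, so that it remains only to control the conditional probabilities on $\mathcal E_z$, where $\|z\|^2\le(1+t_0)n$.

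For the two $\ell_2$ bounds, writing $V=\sum_iZ_iX'_{i,H_1}=(X'_{H_1})^Tz$, I would use that conditionally on $z$ it is $N(0,\|z\|^2\Sigma')$ with $\Sigma'=\mathrm{Cov}(X')_{H_1H_1}$, so $\|\Sigma'\|\le(1+o(1))\|\hat\Omega_1\|$ and $\mathrm{tr}\,\Sigma'\le|H_1|\,\|\Sigma'\|$. Since $v\mapsto\|(\Sigma')^{1/2}v\|$ is $\|\Sigma'\|^{1/2}$-Lipschitz with mean at most $\sqrt{\mathrm{tr}\,\Sigma'}$, Gaussian concentration of the norm gives $\mathbb P\big(\|V\|>\|z\|(\sqrt{\mathrm{tr}\,\Sigma'}+u)\mid z\big)\le e^{-u^2/(2\|\Sigma'\|)}$; substituting $\|z\|^2\le(1+t_0)n$ and $u$ proportional to $t\,\|\hat\Omega_1\|^{1/2}$ gives the first inequality after adding $\mathbb P(\mathcal E_z^c)$, and the second is verbatim with $(Y',H_2,\hat\Omega_2)$. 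For the two $\ell_\infty$ bounds, each coordinate $\big(\sum_iZ_iX'_{i,L_1}\big)_k=\langle z,X'_{\cdot k}\rangle$ is conditionally $N(0,\|z\|^2\sigma_k^2)$ with $\sigma_k^2=\mathrm{Cov}(X')_{kk}\le(1+o(1))\|\hat\Omega_1\|$, so the scalar Gaussian tail bound, a union bound over $k\in L_1$, and the splitting above yield the third inequality; the fourth is identical with $(Y',L_2,\hat\Omega_2)$.

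The conceptual content here is light, so the only genuinely fiddly part will be the constant bookkeeping: one has to make $\sqrt{1+t_0}$ from the first step, the $(1+o(1))^{1/2}$ coming from Lemma \ref{lem:latent}, and the slack obtained by replacing the free concentration parameter by $t+1$ (resp.\ $t+2$) all fit inside the advertised numerical factor $1.03$, while keeping the exponents exactly $e^{-3n/64}$ and $e^{-t^2/2}$. Beyond that, the proof is a routine combination of Lemma \ref{lem:concen1}, Lemma \ref{lem:latent}, and standard Gaussian tail inequalities.
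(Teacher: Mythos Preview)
Your approach is correct and is essentially what the paper does, only more explicitly: the paper whitens first (writing $X'_{H_1}=U_{H_1}R^{1/2}$ with $U_{H_1}$ standard Gaussian and $\|R\|^{1/2}\le 1.01\|\hat\Omega_1\|^{1/2}$ from Lemma~\ref{lem:latent}), and then invokes Proposition~D.2 of \cite{ma2013sparse} as a black box for $\|Z^TU_{H_1}\|$; for the $\ell_\infty$ bounds it does a union bound and applies the same proposition coordinatewise. Your conditioning-on-$z$ argument is exactly the kind of thing that sits inside such a proposition, so conceptually the two routes coincide.

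There is, however, one point where your bookkeeping does not close. With $t_0=\tfrac12$ (which is forced if you want the first error term to be exactly $e^{-3n/64}$ via Lemma~\ref{lem:concen1}), you get $\|z\|\le\sqrt{1.5}\,\sqrt{n}\approx 1.225\sqrt{n}$ on $\mathcal E_z$. Since this factor multiplies the \emph{entire} conditional bound $\sqrt{|H_1|}+t$ (not just the additive part), the extra ``$+1$'' in $(t+1)$ cannot absorb it: you would need $\sqrt{1.5}\,(\sqrt{|H_1|}+t)\le 1.03\,(\sqrt{|H_1|}+t+1)$, which fails once $\sqrt{|H_1|}+t$ exceeds about $5$. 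So your argument proves the lemma with a constant around $1.23$ rather than $1.03$. This is harmless downstream --- every place the lemma is used only needs \emph{some} fixed constant, and the data-driven thresholds $t_{ij},\gamma_1,\gamma_2$ in Section~\ref{sec:threshconst} can be rescaled accordingly --- but as written you have not proved the lemma exactly as stated. The paper obtains the sharper constant by deferring to the cited Proposition~D.2, whose internal argument is not reproduced.
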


\begin{lemma}
\label{lem:concen4} We have 
\begin{eqnarray*}
||\hat{A}_{H_{1}H_{2}}-A_{H_{1}H_{2}}|| &\leq &C\Bigg(s^{1/2}\Bigg(\frac{%
\log p}{n}\Bigg)^{1/2-q/4}+||\alpha -\theta ||\vee ||\beta -\eta ||\Bigg), \\
\max_{i=1,2}|\hat{l}_{i}-{l}_{i}| &\leq &C\Bigg(s^{1/2}\Bigg(\frac{\log p}{n}%
\Bigg)^{1/2-q/4}+||\alpha -\theta ||\vee ||\beta -\eta ||\Bigg), \\
l_1 &\geq& C^{-1}, \\
l_2 &=& 0
\end{eqnarray*}%
for some constant $C>0$ with probability at least $1-O(p^{-2})$. The
quantities $\hat{l}_{i}$ and $l_{i}$ are the $i$-th singular values of $\hat{%
A}_{H_{1}H_{2}}$ and $A_{H_{1}H_{2}}$ respectively.
\end{lemma}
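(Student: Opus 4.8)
The plan is to expand the submatrix $\hat{A}_{H_1H_2}$ using the latent-variable representation of Lemma~\ref{lem:latent}, control the resulting error terms with the concentration inequalities of Lemmas~\ref{lem:concen1}--\ref{lem:concen3} together with the cardinality bounds of Lemma~\ref{lem:cardinality}, and then pass from the matrix bound to the singular values via the standard perturbation bound $|\hat l_i - l_i| \le \|\hat{A}_{H_1H_2} - A_{H_1H_2}\|$. Before that I would record the deterministic facts valid on the conditioning event $\{\xi_\Omega = o(1)\}$: from $\theta^{T}\Sigma_1\theta = \eta^{T}\Sigma_2\eta = 1$, the eigenvalue bounds in Assumption~B, and $\|\alpha-\theta\| \le \xi_\Omega\|\theta\| = o(1)$ (similarly for $\beta$), one gets $\|\alpha\| = (1+o(1))\|\theta\| \asymp 1$, $\|\beta\| \asymp 1$, and $\|\hat{\Omega}_i\| \le \|\hat{\Omega}_i - \Omega_i\| + \lambda_{\max}(\Sigma_i^{-1}) = O(1)$; and $l_2 = 0$ because $A_{H_1H_2} = \lambda\alpha_{H_1}\beta_{H_2}^{T}$ has rank at most one.

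For the matrix bound, substituting $\tilde{X}_{i,H_1} = \sqrt{\lambda}\,\alpha_{H_1}Z_i + X_{i,H_1}^{\prime}$ and $\tilde{Y}_{i,H_2} = \sqrt{\lambda}\,\beta_{H_2}Z_i + Y_{i,H_2}^{\prime}$ into $\hat{A}_{H_1H_2} = \frac1n\sum_i \tilde{X}_{i,H_1}\tilde{Y}_{i,H_2}^{T}$ and subtracting $A_{H_1H_2}$ produces the four terms
\begin{align*}
\mathrm{I} &= \lambda\alpha_{H_1}\beta_{H_2}^{T}\Big(\tfrac1n\textstyle\sum_i Z_i^2 - 1\Big), & \mathrm{II} &= \tfrac{\sqrt{\lambda}}{n}\Big(\textstyle\sum_i Z_i X_{i,H_1}^{\prime}\Big)\beta_{H_2}^{T}, \\
\mathrm{III} &= \tfrac{\sqrt{\lambda}}{n}\alpha_{H_1}\Big(\textstyle\sum_i Z_i Y_{i,H_2}^{\prime}\Big)^{T}, & \mathrm{IV} &= \tfrac1n X_{H_1}^{\prime T} Y_{H_2}^{\prime}.
\end{align*}
I would bound $\|\mathrm{I}\| \le \lambda\|\alpha\|\|\beta\|\,|\tfrac1n\sum_i Z_i^2 - 1| \le C\sqrt{\log p/n}$ via Lemma~\ref{lem:concen1} with $t\asymp\sqrt{\log p/n}$; $\|\mathrm{II}\|$ and $\|\mathrm{III}\|$ by $C(\sqrt{\log p/n} + \sqrt{|H_1|/n} + \sqrt{|H_2|/n})$ via the first two bounds of Lemma~\ref{lem:concen3} with $t\asymp\sqrt{\log p}$, using $\|\hat{\Omega}_i\| = O(1)$ and $\|\alpha_{H_1}\|\vee\|\beta_{H_2}\| = O(1)$; and $\|\mathrm{IV}\|$ by the same quantity via the third bound of Lemma~\ref{lem:concen2}. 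The terms $e^{-3n/64}$ in the failure probabilities are $o(p^{-2})$ since (\ref{SP condition}) forces $n/\log p \to \infty$. Feeding in $|H_j| \le C\big(s_j(\log p_j/n)^{-q/2} + (\log p_j/n)^{-1}(\|\alpha-\theta\|^2\vee\|\beta-\eta\|^2)\big)$ from Lemma~\ref{lem:cardinality} and $\log p_1\asymp\log p_2$ gives $\sqrt{|H_j|/n} \le C\big(s^{1/2}(\log p/n)^{1/2-q/4} + \|\alpha-\theta\|\vee\|\beta-\eta\|\big)$; since $s^{1/2}(\log p/n)^{1/2-q/4}\ge\sqrt{\log p/n}$, a union bound over $\mathrm{I}$--$\mathrm{IV}$ and the triangle inequality give the first displayed bound, and the perturbation bound for singular values then gives the bound on $\max_i|\hat l_i - l_i|$.

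The remaining claim $l_1 = \lambda\|\alpha_{H_1}\|\|\beta_{H_2}\| \ge C^{-1}$ is the step I expect to be the main obstacle, because the signal set $H_1$ is defined through $\alpha = \hat{\Omega}_1\Sigma_1\theta$ rather than through $\theta$ itself. Using $\lambda\ge C_\lambda$ it suffices to bound $\|\alpha_{H_1}\|$ and $\|\beta_{H_2}\|$ below; since $\|\alpha_{H_1}\|^2 = \|\alpha\|^2 - \|\alpha_{L_1}\|^2$ and $\|\alpha\|\asymp 1$, this reduces to $\|\alpha_{L_1}\|^2 = o(1)$, hence, writing $\alpha = \theta + e$ with $\|e\| = o(1)$, to $\|\theta_{L_1}\|^2 = o(1)$. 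I would split $L_1$ into $L_1\cap L_1'$ and $L_1 - L_1' = H_1' - H_1$, where $L_1' = \{k: |\theta_k| < \frac12\delta_1\sqrt{\log p_1/n}\}$: a weak-$\ell_q$ tail estimate (at most $O(s_1(\log p_1/n)^{-q/2})$ coordinates of $\theta$ exceed the threshold, and the $\ell_2$ mass below it is $O(s_1(\log p_1/n)^{1-q/2})$) bounds $\|\theta_{L_1'}\|^2 \le Cs_1(\log p_1/n)^{1-q/2} = o(1)$; on $H_1' - H_1$ one has $|\theta_k| \le |\alpha_k| + |e_k| \le \delta_1\sqrt{\log p_1/n} + |e_k|$, so $\|\theta_{H_1'-H_1}\|^2 \le 2\delta_1^2(\log p_1/n)|H_1'-H_1| + 2\|e\|^2$, and Lemma~\ref{lem:cardinality} controls $(\log p_1/n)|H_1'-H_1|$ by $C(s_1(\log p_1/n)^{1-q/2} + \|\alpha-\theta\|^2) = o(1)$. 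Thus $\|\alpha_{H_1}\| \ge c > 0$ for large $n$, the same argument gives $\|\beta_{H_2}\|\ge c' > 0$, and $l_1 \ge C_\lambda cc' = C^{-1}$. The remainder is routine bookkeeping with the cardinality bounds and the closeness $\|\alpha-\theta\|\vee\|\beta-\eta\| = o(1)$.
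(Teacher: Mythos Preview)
Your proof is correct and follows essentially the same route as the paper: the latent-variable decomposition into the four terms $\mathrm{I}$--$\mathrm{IV}$, the same concentration lemmas applied to each, the cardinality bounds on $|H_1|,|H_2|$, and Weyl's inequality for the singular values all match the paper's argument. The only notable difference is in establishing $l_1\ge C^{-1}$: the paper shortcuts this by invoking the bound $e_B=o(1)$ from Proposition~\ref{lemma:: Consist (B_)^c} together with the inclusion $L_i\subset (B_i^-)^c$ from the initializer analysis, whereas you bound $\|\alpha_{L_1}\|$ directly via the $L_1'\,/\,(L_1-L_1')$ splitting and the weak-$\ell_q$ tail estimate. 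Your route is more self-contained (it does not rely on the initializer section) and is in fact the same computation the paper carries out elsewhere, in the proof of Lemma~\ref{lem:1st}; either way works.
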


\section{Analysis of the Initializer}

We show that the Algorithm \ref{alg:: CTSCCA} actually returns a consistent
estimator of leading pair of singular vectors $\left( \alpha ^{(0)},\beta
^{(0)}\right) $, which serves as a good candidate for the power method in
Algorithm \ref{alg:: ITSCCA}. To be specific, with high probability, the
initialization procedure correctly kills all low signal coordinates to
zeros, i.e. $\alpha _{L_{1}}^{(0)}=0$ and $\beta _{L_{2}}^{(0)}=0$, by the
thresholding step. On the other hand, although Algorithm \ref{alg:: CTSCCA}
cannot always correctly pick all strong signal coordinates, it does pick
those much stronger ones such that $\left( \alpha ^{(0)},\beta ^{(0)}\right) 
$ is still consistent up to a sign. The properties are summarized below.

\begin{lemma}
\label{lemma:: Initial} With probability $1-Cp^{-2}$, we have that,

\begin{enumerate}
\item $B_{i}\subset H_{i}$, $i=1,2$;

\item $\left\vert \hat{l}_{i}-\hat{l}_{i}^{B}\right\vert \rightarrow 0$ for $%
i=1,2.$ where $\hat{l}_{i}$ and $\hat{l}_{i}^{B}$ are the $i$th singular
value of $\hat{A}_{H_{1}H_{2}}$ and $\hat{A}_{B_{1}B_{2}}$;

\item $\alpha ^{(0)}$ and $\beta ^{(0)}$ are consistent, i.e., $L\left(
\alpha ^{(0)},\hat{\alpha}^{ora}\right) \rightarrow 0$ and $L\left( \beta
^{(0)},\hat{\beta}^{ora}\right) \rightarrow 0$.
\end{enumerate}
\end{lemma}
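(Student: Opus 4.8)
\noindent\emph{Proof plan.} I would argue conditionally on the second half of the sample and on the event $\{\xi_\Omega = o(1)\}$, as everywhere in Section~\ref{sec:proof}, and establish the three parts in turn, using throughout the elementary bounds $\|\alpha\|\vee\|\beta\| \le (1+o(1))w^{-1/2}$, $|\alpha_i|^2 \le (1+o(1))\|\hat\Omega_1\|$ and $|\beta_j|^2 \le (1+o(1))\|\hat\Omega_2\|$, which follow from $\theta^T\Sigma_1\theta = \eta^T\Sigma_2\eta = 1$, $\hat\Omega_i\succeq 0$ and $\|\hat\Omega_i\Sigma_i-I\| = o(1)$.

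For part (1), the engine is a uniform entrywise deviation bound: with probability $1-O(p^{-2})$, $\max_{i,j}|\hat a_{ij}-\lambda\alpha_i\beta_j| \le \tfrac{9}{20\sqrt2}\,t_{ij}\sqrt{\log p/n}$. I would prove this by plugging the latent representation $\tilde X_{k,i} = \sqrt\lambda\,\alpha_iZ_k + X'_{k,i}$, $\tilde Y_{k,j} = \sqrt\lambda\,\beta_jZ_k + Y'_{k,j}$ of Lemma~\ref{lem:latent} into $\hat a_{ij} = \tfrac1n\sum_k\tilde X_{k,i}\tilde Y_{k,j}$, which expands $\hat a_{ij}-\lambda\alpha_i\beta_j$ into four pieces (namely $\lambda\alpha_i\beta_j(\tfrac1n\sum_k Z_k^2-1)$, two cross terms, and $\tfrac1n\sum_k X'_{k,i}Y'_{k,j}$); bounding these by Lemma~\ref{lem:concen1} and standard sub-exponential tail bounds, using $[\mathrm{Cov}(X')]_{ii}\le(1+o(1))\hat\omega_{1,ii}$, $[\mathrm{Cov}(Y')]_{jj}\le(1+o(1))\hat\omega_{2,jj}$, $\|\mathrm{Cov}(X')\|\le(1+o(1))\|\hat\Omega_1\|$ (Lemma~\ref{lem:latent}), and the moment bounds above, then union-bounding over all $p_1p_2$ entries, makes the four contributions sum to $\tfrac{9}{20\sqrt2}\,t_{ij}\sqrt{\log p/n}$ — which is precisely what the prefactor $\tfrac{20\sqrt2}{9}$ and the four summands defining $t_{ij}$ are calibrated for. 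Given this, if $i\notin H_1$ then $|\alpha_i|<\delta_1\sqrt{\log p_1/n}$ with $\delta_1 = 0.08\,w^{1/2}\min_{ij}t_{ij}$, so $|\lambda\alpha_i\beta_j|\le|\alpha_i|\|\beta\| \le 0.08(1+o(1))t_{ij}\sqrt{\log p_1/n}$ for every $j$, whence (using $\log p\asymp\log p_1$) $|\hat a_{ij}| < (0.08 + \tfrac{9}{20\sqrt2})(1+o(1))t_{ij}\sqrt{\log p_1/n} < t_{ij}\sqrt{\log p_1/n}$, i.e. $i\notin B_1$. Thus $B_1\subset H_1$, and symmetrically $B_2\subset H_2$ — the fact $B_i^{ora}=B_i$ already invoked in Lemma~\ref{lem:same0}.

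For part (2), since $B_i\subset H_i$, zero-padding $\hat A_{B_1B_2}$ into the $H_1\times H_2$ frame yields $P_1\hat A_{H_1H_2}P_2$ with $P_1,P_2$ the coordinate projections onto $B_1,B_2$, so $\hat l_i^B = \sigma_i(P_1\hat A_{H_1H_2}P_2)$ and, by Weyl's inequality for singular values, $|\hat l_i^B-\hat l_i| \le \|\hat A_{H_1H_2}-P_1\hat A_{H_1H_2}P_2\| \le \|\hat A_{(H_1\setminus B_1),H_2}\| + \|\hat A_{H_1,(H_2\setminus B_2)}\|$. Writing $\hat A = \lambda\alpha\beta^T + (\hat A-A)$, the noise block is $o(1)$ by Lemma~\ref{lem:concen4}, while $\|A_{(H_1\setminus B_1),H_2}\| = \lambda\|\alpha_{H_1\setminus B_1}\|\|\beta_{H_2}\| \lesssim \|\alpha_{H_1\setminus B_1}\|$, so everything reduces to the key estimate $\|\alpha_{H_1\setminus B_1}\| = o(1)$ (and symmetrically $\|\beta_{H_2\setminus B_2}\| = o(1)$). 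I would prove it as follows: for $i\in H_1\setminus B_1$ all entries of row $i$ of $\hat A$ are small, so choosing $j$ attaining $\|\beta\|_\infty$ and combining $|\hat a_{ij}| < t_{ij}\sqrt{\log p_1/n}$ with the part-(1) bound gives $|\alpha_i| \lesssim \sqrt{\log p/n}/\|\beta\|_\infty$; a dyadic peeling over the weak-$\ell_q$ ball (transported from $\theta$ to $\alpha$ at the negligible cost $\|\alpha-\theta\|=o(1)$) gives $\|\alpha_{H_1\setminus B_1}\|^2 \lesssim s_1\bigl(\sqrt{\log p/n}/\|\beta\|_\infty\bigr)^{2-q}$; and the sharp lower bound $\|\beta\|_\infty\gtrsim\|\eta\|_\infty\gtrsim s_2^{-1/(2-q)}$ for $q<2$ (from $\|\eta\|_2\asymp1$ together with $|\eta_{(k)}|^q\le s_2k^{-1}$) collapses this to $\|\alpha_{H_1\setminus B_1}\|^2 \lesssim s_1s_2(\log p/n)^{1-q/2} \le s^2(\log p/n)^{1-q/2} = o(1)$ by Assumption~A. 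Hence $|\hat l_i^B-\hat l_i| = o(1)$.

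For part (3), Lemma~\ref{lem:concen4} gives $\hat l_1\ge C^{-1}$ and $\hat l_2 = o(1)$, so part (2) yields $\hat l_1^B\ge C^{-1}-o(1)$, $\hat l_2^B = o(1)$; thus the top two singular values of $P_1\hat A_{H_1H_2}P_2$ (equivalently of $\hat A_{B_1B_2}$) are separated by a gap bounded away from $0$, which dominates the perturbation $\|\hat A_{H_1H_2}-P_1\hat A_{H_1H_2}P_2\|=o(1)$ from part (2). Wedin's $\sin\Theta$ theorem for the leading singular-vector pairs of $\hat A_{H_1H_2}$ and $P_1\hat A_{H_1H_2}P_2$ — which after zero-padding are $(\hat\alpha^{ora},\hat\beta^{ora})$ and $(\alpha^{(0)},\beta^{(0)})$, supported in $H_1,H_2$ and in $B_1,B_2$ respectively — then gives $|\sin\angle(\alpha^{(0)},\hat\alpha^{ora})|\vee|\sin\angle(\beta^{(0)},\hat\beta^{ora})|=o(1)$, i.e. $L(\alpha^{(0)},\hat\alpha^{ora})\vee L(\beta^{(0)},\hat\beta^{ora})=o(1)$ since $L(a,b)=\sqrt2\,|\sin\angle(a,b)|$. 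The main obstacle is the pair of estimates in part (2): the entrywise concentration of $\hat A$ calibrated exactly to the data-driven $t_{ij}$ (careful variance accounting of the four latent-expansion terms), and — conceptually more delicate — the bound $\|\alpha_{H_1\setminus B_1}\|=o(1)$, i.e. the statement that although coordinate thresholding may discard some members of the strong-signal set $H_1$, it can only discard ones whose combined signal energy is negligible; the latter rests on the sharp lower bound $\|\eta\|_\infty\gtrsim s_2^{-1/(2-q)}$ for unit vectors in a weak-$\ell_q$ ball.
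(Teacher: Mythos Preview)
Your proposal is correct and follows essentially the same architecture as the paper: entrywise concentration of $\hat A$ calibrated to $t_{ij}$ for part~(1), Weyl plus a bound on the signal energy $\|\alpha_{H_1\setminus B_1}\|$ via the weak-$\ell_q$ structure and the sharp lower bound $\|\beta\|_\infty\gtrsim s_2^{-1/(2-q)}$ for part~(2), and Wedin for part~(3). The only organizational differences are that the paper states the lower inclusion $B_i^-\subset B_i$ explicitly in part~(1) (your argument that $i\notin B_1\Rightarrow|\alpha_i|\lesssim\sqrt{\log p/n}/\|\beta\|_\infty$ is exactly this), and in part~(2) the paper re-expands each sub-block $\hat A_{D_1B_2},\hat A_{B_1D_2},\hat A_{D_1D_2}$ via the latent representation rather than invoking Lemma~\ref{lem:concen4} for the noise part as you do --- your packaging is slightly cleaner here.
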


The procedure is to select those strong signal coordinates of $\alpha $ and $%
\beta $ and is similar to the \textquotedblleft diagonal
thresholding\textquotedblright\ sparse PCA method proposed by \cite%
{johnstone09}. However, unlike the PCA setting, we cannot get the
information of each coordinate through its corresponding diagonal entry of
the sample covariance matrix. Instead, we measure the strength of
coordinates in terms of the maximum entry among its corresponding row or
column of the sample covariance matrix and still can capture all coordinates
of $\alpha $ and $\beta $ above the level of $\left( \frac{\log p}{n}\right)
^{1/4}$. The sparsity assumption in Equation (\ref{SP condition}) is needed
to guarantee the consistency of the initial estimator $\left( \alpha
^{(0)},\beta ^{(0)}\right)$.

The proof is developed in the following. In the first part, we prove the
three results in Lemma \ref{lemma:: Initial} along with stating some useful
propositions. We then prove those propositions in the second part.

\subsection{Proof of Lemma \protect\ref{lemma:: Initial}}

\begin{proof}[Proof of Result 1] 
We start with showing the first result $B_{i}\subset H_{i}$. In fact we show
that the index set $B_{i}$screens out all weak signal coordinates as well as
captures all coordinates with much stronger signal coordinates $\sqrt{\frac{%
\log p}{n}}s^{\frac{1}{2-q}}$, compared with the thresholding $\sqrt{\frac{%
\log p}{n}}$of $H_{i}$. By the sparsity assumption (\ref{SP condition}),
clearly $B_{i}$also captures all coordinates with much stronger signal $%
\left( \frac{\log p}{n}\right) ^{1/4}$. To be specific, we show that with
probability $1-Cp^{-2}$, we have%
\begin{equation}
B_{i}^{-}\subset B_{i}\subset H_{i},  \label{Range of B}
\end{equation}%
where the index set of those stronger signal coordinates are defined as
follows,%
\begin{equation*}
B_{1}^{-}=\left\{ i,\left\vert \alpha _{i}\right\vert >\phi \sqrt{\frac{\log
p}{n}}s_{2}^{\frac{1}{2-q}}\right\} ,B_{2}^{-}=\left\{ i,\left\vert \beta
_{i}\right\vert >\phi \sqrt{\frac{\log p}{n}}s_{1}^{\frac{1}{2-q}}\right\} .
\end{equation*}%
The constant $\phi $ is determined in the analysis. $\phi
=2\max_{i,j}t_{ij}C_{\lambda }^{-1}\left( \frac{\sqrt{W}}{0.6}\left(
2W^{1/2}\right) ^{\frac{-q}{2-q}}\right) ^{1/2}$.

Recall that $\hat{A}=\frac{1}{n}\sum_{i}\tilde{X}_{i}\tilde{Y}_{i}^{T}$ with
conditional mean $\mathbb{E}\hat{A}=A=\lambda \alpha \beta ^{T}=\left(
a_{ij}\right) _{p_{1}\times p_{2}}$. The result (\ref{Range of B}) can be
shown in two steps. During the first step we show that with probability $%
1-Cp^{-2}$, the index set $B_{i}$ satisfies $B_{i}^{--}\subset B_{i}\subset
B_{i}^{++}$, where for simplicity we pick $\kappa _{-}=0.09,\kappa _{+}=2$
and 
\begin{equation*}
B_{1}^{++}=\left\{ i:\max_{j}\frac{|{a}_{ij}|}{t_{ij}}>\kappa _{-}\sqrt{%
\frac{1}{n}\log p}\right\} ,B_{1}^{--}=\left\{ i:\max_{j}\frac{|{a}_{ij}|}{%
t_{ij}}>\kappa _{+}\sqrt{\frac{1}{n}\log p}\right\} ,
\end{equation*}%
\begin{equation*}
B_{2}^{++}=\left\{ j:\max_{i}\frac{|{a}_{ij}|}{t_{ij}}>\kappa _{-}\sqrt{%
\frac{1}{n}\log p}\right\} ,B_{2}^{--}=\left\{ j:\max_{i}\frac{|{a}_{ij}|}{%
t_{ij}}>\kappa _{+}\sqrt{\frac{1}{n}\log p}\right\} .
\end{equation*}%
For the second step, we show that $B_{i}^{++}\subset H_{i}$ and $%
B_{i}^{--}\subset B_{i}^{-}$ with probability $1-Cp^{-2}$. We present these
two results in the following two propositions.

\begin{proposition}
\label{lemma:: Concentration B} With probability $1-Cp^{-2}$, we have $%
B_{i}^{--}\subset B_{i}\subset B_{i}^{++}$ for $i=1,2.$
\end{proposition}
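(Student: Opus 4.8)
The plan is to deduce Proposition~\ref{lemma:: Concentration B} from a single uniform concentration statement for the entries of $\hat{A}$ relative to the scale sequence $t_{ij}$: conditionally on the second half of the sample and on the event $\{\xi_{\Omega}=o(1)\}$, with probability at least $1-Cp^{-2}$,
\begin{equation*}
\max_{i,j}\frac{|\hat{a}_{ij}-a_{ij}|}{t_{ij}}\le c\sqrt{\frac{\log p}{n}}
\end{equation*}
for a suitably small absolute constant $c$. Once this is available, both inclusions $B_i^{--}\subset B_i\subset B_i^{++}$ follow by adding and subtracting $a_{ij}$ inside the maximum defining $B_i$ and comparing the three thresholds $\kappa_{+}\sqrt{\log p/n}$, $\sqrt{\log p_i/n}$ and $\kappa_{-}\sqrt{\log p/n}$, using $\log p_1\asymp\log p_2\asymp\log p$; the crude choices $\kappa_{-}=0.09$ and $\kappa_{+}=2$ are made precisely so that this comparison has room to spare. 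Thus the whole proposition reduces to the displayed deviation bound.

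For that bound I would pass to the latent-variable representation of Lemma~\ref{lem:latent}, $\tilde{X}_k=\sqrt{\lambda}\,\alpha Z_k+X_k'$ and $\tilde{Y}_k=\sqrt{\lambda}\,\beta Z_k+Y_k'$ with $Z_k,X_k',Y_k'$ mutually independent, and recall $\hat{a}_{ij}=\frac{1}{n}\sum_{k=1}^{n}\tilde{X}_{k,i}\tilde{Y}_{k,j}$ with conditional mean $a_{ij}=\lambda\alpha_i\beta_j$. Multiplying out gives the four-term decomposition
\begin{equation*}
\hat{a}_{ij}-a_{ij}=\lambda\alpha_i\beta_j\Bigl(\tfrac{1}{n}\sum_{k=1}^{n}Z_k^2-1\Bigr)+\frac{\sqrt{\lambda}\,\alpha_i}{n}\sum_{k=1}^{n}Z_k Y_{k,j}'+\frac{\sqrt{\lambda}\,\beta_j}{n}\sum_{k=1}^{n}Z_k X_{k,i}'+\frac{1}{n}\sum_{k=1}^{n}X_{k,i}'Y_{k,j}',
\end{equation*}
whose four summands match, in order, the four pieces $\sqrt{8\|\hat{\Omega}_1\|\,\|\hat{\Omega}_2\|/3}$, $\sqrt{\|\hat{\Omega}_1\|\,\hat{\omega}_{2,jj}}$, $\sqrt{\|\hat{\Omega}_2\|\,\hat{\omega}_{1,ii}}$, $\sqrt{\hat{\omega}_{1,ii}\hat{\omega}_{2,jj}}$ appearing in $t_{ij}$. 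I would control the $\chi^2$ term by Lemma~\ref{lem:concen1}, using $\lambda|\alpha_i\beta_j|\le(1+o(1))\sqrt{\|\hat{\Omega}_1\|\,\|\hat{\Omega}_2\|}$ (which comes from $\mathrm{Cov}(\tilde{X})=\hat{\Omega}_1\Sigma_1\hat{\Omega}_1\succeq\lambda\alpha\alpha^{T}$ together with $\|\hat{\Omega}_1\Sigma_1-I\|=o(1)$, and the symmetric bound for $Y$); the two cross terms by the scalar Gaussian tail underlying Lemma~\ref{lem:concen3}, using $\sqrt{\lambda}\,|\alpha_i|\le(1+o(1))\|\hat{\Omega}_1\|^{1/2}$, $\mathrm{Var}(Y_{k,j}')=[\mathrm{Cov}(Y')]_{jj}$, and the same with $X$ and $Y$ swapped; and the bilinear term by a Bernstein tail for an average of independent products of centered Gaussians. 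Each of the four tails has the shape $e^{-c't^2}$ evaluated at $t\asymp\sqrt{\log p}$, so a union bound over the at most $p_1p_2\le p^2$ pairs $(i,j)$ retains probability $1-O(p^{-2})$; the overall constant $\tfrac{20\sqrt{2}}{9}$ in $t_{ij}$ is exactly the margin needed both for the four standard deviations to sum below $c\,t_{ij}\sqrt{\log p/n}$ and for the exponent of the union bound to reach $-2$.

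The main obstacle is the bilinear term $\frac{1}{n}\sum_{k=1}^{n}X_{k,i}'Y_{k,j}'$. Putting it on the scale $\sqrt{\hat{\omega}_{1,ii}\hat{\omega}_{2,jj}}$ requires dominating the diagonal entry $[\mathrm{Cov}(X')]_{ii}=[\hat{\Omega}_1\Sigma_1\hat{\Omega}_1-\lambda\alpha\alpha^{T}]_{ii}$ by a constant multiple of $\hat{\omega}_{1,ii}$ --- a sharper, entrywise refinement of the spectral bound $\|\mathrm{Cov}(X')\|\le(1+o(1))\|\hat{\Omega}_1\|$ from Lemma~\ref{lem:latent}, and the step where the consistency hypothesis $\xi_{\Omega}=o(1)$ really earns its keep --- and then applying a sub-exponential concentration inequality to an average of products of two independent Gaussians rather than to a Gaussian sum. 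The rest is bookkeeping: carrying the $1.01$- and $1.03$-type constants from Lemmas~\ref{lem:concen1}--\ref{lem:concen3} through the union bound so that the exponent comes out $\le-2$, which is routine but is exactly what fixes the explicit constants in $t_{ij}$, $\gamma_1$ and $\gamma_2$ recorded in Section~\ref{sec:threshconst}.
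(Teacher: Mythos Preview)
Your proposal is correct and follows essentially the same route as the paper: the identical latent-variable four-term decomposition of $\hat a_{ij}-a_{ij}$, entrywise concentration for each piece matched to the four summands in $t_{ij}$, and a union bound over $(i,j)$ to recover both inclusions. The only point worth noting is that what you flag as ``the main obstacle'' is in fact a one-liner in the paper, $\mathrm{Var}(X_{1,i}')\le\mathrm{Var}(\tilde X_{1,i})=[\hat\Omega_1\Sigma_1\hat\Omega_1]_{ii}=(1+o(1))\hat\omega_{1,ii}$, the first inequality coming directly from independence in the latent representation and the last from $\|\hat\Omega_1\Sigma_1\hat\Omega_1-\hat\Omega_1\|\le\xi_\Omega\|\hat\Omega_1\|=o(1)$ together with $\hat\omega_{1,ii}$ bounded away from zero.
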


\begin{proposition}
\label{lemma:: Concentration B part2} With probability $1-Cp^{-2}$, we have $%
B_{i}^{++}\subset H_{i}$ and $B_{i}^{--}\subset B_{i}^{-}$ for $i=1,2.$
\end{proposition}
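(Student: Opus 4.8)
The statement couples two inclusions, and in both the only probabilistic ingredient is the event $\{\xi_{\Omega}=o(1)\}$ from Assumption B, which holds with probability at least $1-O(p^{-2})$; conditionally on the second half of the sample and on this event everything below is deterministic, since then $A=\lambda\alpha\beta^{T}=(a_{ij})$ is fixed and, by Assumption B, $\min_{i,j}t_{ij}$ and $\max_{i,j}t_{ij}$ are positive constants.

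\emph{Step 1: $B_i^{++}\subset H_i$.} Fix $i\in B_1^{++}$, so $\max_j|a_{ij}|/t_{ij}>\kappa_-\sqrt{(\log p)/n}$ with $\kappa_-=0.09$. Since $a_{ij}=\lambda\alpha_i\beta_j$ and $\lambda\le 1$, this forces $|\alpha_i|\cdot\max_j(|\beta_j|/t_{ij})>0.09\sqrt{(\log p)/n}$. I bound the second factor by $\max_j(|\beta_j|/t_{ij})\le\|\beta\|_\infty/\min_{i,j}t_{ij}\le\|\beta\|/\min_{i,j}t_{ij}$, and note $\|\beta\|=\|\hat\Omega_2\Sigma_2\eta\|\le(1+\xi_\Omega)\|\eta\|\le(1+o(1))w^{-1/2}$ because $w\|\eta\|^2\le\eta^T\Sigma_2\eta=1$. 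Hence $|\alpha_i|>\frac{0.09\,w^{1/2}\min_{i,j}t_{ij}}{1+o(1)}\sqrt{(\log p)/n}$; using $\log p\ge\log p_1$ and $0.09/(1+o(1))\ge 0.08$ for $n$ large, $|\alpha_i|\ge 0.08\,w^{1/2}\min_{i,j}t_{ij}\sqrt{(\log p_1)/n}=\delta_1\sqrt{(\log p_1)/n}$, i.e. $i\in H_1$. The inclusion $B_2^{++}\subset H_2$ follows by interchanging the roles of $(\alpha,\Omega_1,p_1)$ and $(\beta,\Omega_2,p_2)$.

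\emph{Step 2: the $B_i^{--}$--$B_i^-$ inclusion.} I establish $B_i^-\subset B_i^{--}$; combined with Proposition \ref{lemma:: Concentration B} this yields $B_i^-\subset B_i^{--}\subset B_i\subset B_i^{++}\subset H_i$, which is exactly (\ref{Range of B}). Fix $i\in B_1^-$, i.e. $|\alpha_i|>\phi\,\sqrt{(\log p)/n}\,s_2^{1/(2-q)}$. From $\lambda\ge C_\lambda$ and $t_{ij}\le\max_{i,j}t_{ij}$,
\[
\max_j\frac{|a_{ij}|}{t_{ij}}=\lambda\,|\alpha_i|\,\max_j\frac{|\beta_j|}{t_{ij}}\ \ge\ \frac{C_\lambda\,|\alpha_i|\,\|\beta\|_\infty}{\max_{i,j}t_{ij}}.
\]
The crux is a lower bound $\|\beta\|_\infty\ge c_{q,W}\,s_2^{-1/(2-q)}$. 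For this, split the ordered coordinates of $\eta$ at $m=s_2\|\eta\|_\infty^{-q}$: the top $m$ contribute at most $m\|\eta\|_\infty^2$, and the weak-$\ell_q$ bound $|\eta_{(k)}|^q\le s_2/k$ makes the tail contribute at most $\sum_{k>m}(s_2/k)^{2/q}\le\frac{q}{2-q}\,s_2^{2/q}m^{1-2/q}$, whence $W^{-1}\le\|\eta\|^2\le C(q)\,s_2\,\|\eta\|_\infty^{2-q}$ and $\|\eta\|_\infty\ge (C(q)W)^{-1/(2-q)}s_2^{-1/(2-q)}$; then $\|\beta\|_\infty\ge\|\eta\|_\infty-\|\beta-\eta\|\ge\|\eta\|_\infty-\xi_\Omega w^{-1/2}\ge c_{q,W}\,s_2^{-1/(2-q)}$ for $n$ large, the last step using $\xi_\Omega=o(1)$ together with the sparsity bound (\ref{SP condition}) (which forces $s_2^{-1/(2-q)}$ to be of strictly larger order than $((\log p)/n)^{1/4}$, so $\|\beta-\eta\|$ is negligible against $\|\eta\|_\infty$). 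Substituting into the display and using $|\alpha_i|>\phi\sqrt{(\log p)/n}s_2^{1/(2-q)}$ gives $\max_j|a_{ij}|/t_{ij}>\big(C_\lambda c_{q,W}\phi/\max_{i,j}t_{ij}\big)\sqrt{(\log p)/n}$, and the stated value $\phi=2\max_{i,j}t_{ij}\,C_\lambda^{-1}\big(\frac{\sqrt W}{0.6}(2W^{1/2})^{-q/(2-q)}\big)^{1/2}$ is precisely the one for which $C_\lambda c_{q,W}\phi/\max_{i,j}t_{ij}\ge 2=\kappa_+$, so $i\in B_1^{--}$. The case $i\in B_2^-$ is symmetric.

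The main obstacle is Step 2: pinning down the uniform lower bound on $\|\beta\|_\infty$, controlling the nuisance-estimation perturbation $\beta-\eta$ so that it cannot swamp $\|\eta\|_\infty\asymp s_2^{-1/(2-q)}$ (this is where the conditioning $\{\xi_\Omega=o(1)\}$ and the sparsity condition (\ref{SP condition}) enter), and calibrating the multiplicative constants so that the value of $\phi$ quoted in the statement makes the threshold $\kappa_+=2$ work. Step 1, by contrast, is a short deterministic estimate once $\{\xi_\Omega=o(1)\}$ is in force.
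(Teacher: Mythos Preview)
Your Step~1 is correct and is essentially identical to the paper's argument: both amount to checking $\kappa_{-}\min_{i,j}t_{ij}/(\lambda\max_j|\beta_j|)\ge\delta_1$, using $\lambda\le 1$ and $\|\beta\|\le(1+o(1))w^{-1/2}$.

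In Step~2 you rightly observe that what is actually needed for (\ref{Range of B}) is the inclusion $B_i^{-}\subset B_i^{--}$, and indeed the paper's own proof establishes exactly this direction (its sufficient condition $\phi\,s_2^{1/(2-q)}>\kappa_{+}\max_{i,j}t_{ij}/(\lambda\max_j|\beta_j|)$ yields $B_1^{-}\subset B_1^{--}$, not the reverse). So your reading of the statement is the correct one.

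However, your derivation of the lower bound on $\|\beta\|_\infty$ has a real gap. You first obtain $\|\eta\|_\infty\ge c_{q,W}\,s_2^{-1/(2-q)}$ (this is fine) and then pass to $\|\beta\|_\infty\ge\|\eta\|_\infty-\|\beta-\eta\|$, arguing that $\|\beta-\eta\|\le\xi_\Omega w^{-1/2}=o(1)$ is negligible against $\|\eta\|_\infty$. But under Assumption~B you only have $\xi_\Omega=o(1)$ with no rate, whereas $s_2^{-1/(2-q)}$ can itself tend to zero: the sparsity condition (\ref{SP condition}) only gives $s_2^{-1/(2-q)}\gtrsim(\log p/n)^{1/4}$, which is not comparable to a generic $o(1)$. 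Your parenthetical justification conflates the order of $s_2^{-1/(2-q)}$ with the order of $\|\beta-\eta\|$; nothing in the assumptions forces $\xi_\Omega$ to be $O((\log p/n)^{1/4})$. Thus the subtraction $\|\eta\|_\infty-\|\beta-\eta\|$ can be negative and the argument breaks.

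The paper avoids this by never passing through $\|\eta\|_\infty$. It does the head/tail split directly on $\|\beta\|^2$, which is bounded below by a \emph{constant} ($\|\beta\|\ge(1-o(1))\|\eta\|\ge 0.9W^{-1/2}$): for any integer $k$,
\[
\|\beta\|^2\ \le\ k\,\|\beta\|_\infty^2+\sum_{j\in I_k^c}|\beta_j|^2\ \le\ k\,\|\beta\|_\infty^2+\frac{q}{2-q}\,s_2^{2/q}k^{1-2/q}+o(1),
\]
where the $o(1)$ absorbs the $\|\beta-\eta\|$ contribution but is now compared to a constant, not to $s_2^{-1/(2-q)}$. Choosing $k_0\asymp s_2^{2/(2-q)}$ makes the tail term a fixed fraction of $\|\beta\|^2$, so $k_0\|\beta\|_\infty^2\ge c>0$ and hence $\|\beta\|_\infty\ge\sqrt{C_l}\,s_2^{-1/(2-q)}$ with $C_l=0.6W^{-1/2}(2W^{1/2})^{q/(2-q)}$. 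Plugging this into your display and using $\lambda\ge C_\lambda$ then gives $\max_j|a_{ij}|/t_{ij}\ge \kappa_+\sqrt{(\log p)/n}$ precisely for the stated value of $\phi$. If you replace your $\|\eta\|_\infty$ route by this direct argument on $\|\beta\|^2$, your Step~2 goes through.
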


Thus, the proof is complete. 
\end{proof}

Before proving Result 2 and Result 3, we need the following proposition.

\begin{proposition}
\label{lemma:: Consist (B_)^c} Define 
\begin{equation}
e_{B}^{2}=\max \left\{ \sum_{i\in \left( B_{1}^{-}\right) ^{c}}\alpha
_{i}^{2},\sum_{i\in \left( B_{2}^{-}\right) ^{c}}\beta _{i}^{2}\right\} %
\mbox{\rm .}  \label{e_B}
\end{equation}%
Then we have for some constants $C_{1},C_{2}>0$,%
\begin{equation}
e_{B}^{2}\leq C_{1}\left( \frac{\log p}{n}\right) ^{1-q/2}\left(
s_{1}^{2}\vee s_{2}^{2}\right) +C_{2}\xi _{\Omega }^{2}.  \label{e_B Bound}
\end{equation}%
Moreover, under our assumptions (\ref{SP condition}) and (\ref{Consistency
condition}), we obtain $e_{B}=o(1)$ with probability $1-Cp^{-2}$.
\end{proposition}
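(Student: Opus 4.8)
The two quantities inside the maximum defining $e_B^2$ are symmetric, so the plan is to bound $\sum_{i\in(B_1^-)^c}\alpha_i^2$ and to deduce the bound on $\sum_{i\in(B_2^-)^c}\beta_i^2$ by exchanging $(\alpha,s_1,\theta,\hat\Omega_1)$ with $(\beta,s_2,\eta,\hat\Omega_2)$. Recall $\alpha=\hat\Omega_1\Sigma_1\theta$; since $\theta^T\Sigma_1\theta=1$ and $\lambda_{\min}(\Sigma_1)\geq w$ we have $\|\theta\|\leq w^{-1/2}$, so $\|\alpha-\theta\|=\|(\hat\Omega_1\Sigma_1-I)\theta\|\leq w^{-1/2}\xi_\Omega$ and likewise $\|\beta-\eta\|\leq w^{-1/2}\xi_\Omega$. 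Write $\tau=\phi\sqrt{(\log p)/n}\,s_2^{1/(2-q)}$ with $\phi$ the constant fixed above, so that $(B_1^-)^c=\{i:|\alpha_i|\leq\tau\}$; here we take $0\leq q<2$ so that the exponent $1/(2-q)$ is well defined.

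The first step is to replace $\alpha$ by the sparse vector $\theta$: from $\alpha_i^2\leq 2\theta_i^2+2(\alpha_i-\theta_i)^2$ we get $\sum_{i\in(B_1^-)^c}\alpha_i^2\leq 2\sum_{i\in(B_1^-)^c}\theta_i^2+2\|\alpha-\theta\|^2$, and the last term is $O(\xi_\Omega^2)$. To control $\sum_{i\in(B_1^-)^c}\theta_i^2$ I would split $(B_1^-)^c$ into the indices with $|\theta_i|>2\tau$ and those with $|\theta_i|\leq 2\tau$. On the first set $|\theta_i|>2\tau\geq 2|\alpha_i|$ gives $|\alpha_i-\theta_i|\geq|\theta_i|/2$, so that contribution is at most $4\|\alpha-\theta\|^2=O(\xi_\Omega^2)$, again absorbed into the $C_2\xi_\Omega^2$ term. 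The remaining contribution is bounded by the weak-$l_q$ tail sum $\sum_{i:\,|\theta_i|\leq 2\tau}\theta_i^2$.

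The technical core is the elementary estimate: if $|\theta_{(k)}|^q\leq s_1k^{-1}$ for all $k$, then for every $u>0$ and $0\leq q<2$ one has $\sum_{k:\,|\theta_{(k)}|\leq u}|\theta_{(k)}|^2\leq \frac{2}{2-q}s_1u^{2-q}$. I would prove this by splitting the ordered coordinates at $k^{\ast}\asymp s_1u^{-q}$: the at most $k^{\ast}$ indices $k\leq k^{\ast}$ with $|\theta_{(k)}|\leq u$ contribute at most $k^{\ast}u^2\asymp s_1u^{2-q}$, while for $k>k^{\ast}$ one uses $|\theta_{(k)}|^2\leq s_1^{2/q}k^{-2/q}$ and sums the convergent series $\sum_{k>k^{\ast}}k^{-2/q}\leq\frac{q}{2-q}(k^{\ast})^{1-2/q}$, which contributes $\frac{q}{2-q}s_1u^{2-q}$. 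Applying this with $u=2\tau$ and using $(2\tau)^{2-q}=(2\phi)^{2-q}((\log p)/n)^{1-q/2}s_2$ bounds the surviving part by $C s_1 s_2\,((\log p)/n)^{1-q/2}\leq C (s_1^2\vee s_2^2)\,((\log p)/n)^{1-q/2}$.

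Collecting the three contributions yields, for every realization, the claimed bound $e_B^2\leq C_1\,((\log p)/n)^{1-q/2}(s_1^2\vee s_2^2)+C_2\xi_\Omega^2$. For the ``moreover'' part: the sparsity condition~(\ref{SP condition}) gives $s=s_1\vee s_2=o((n/\log p)^{1/2-q/4})$, hence $s^2=o((n/\log p)^{1-q/2})$ and the first term is $o(1)$; the consistency condition~(\ref{Consistency condition}) gives $\xi_\Omega=o(1)$ on an event of probability at least $1-O(p^{-2})$, so the second term is $o(1)$ there. Therefore $e_B=o(1)$ with probability $1-O(p^{-2})$. The one step that needs genuine care is the weak-$l_q$ tail estimate, together with keeping the exponent $1/(2-q)$ straight so that $s_2^{1/(2-q)}$ raised to the power $2-q$ returns the clean factor $s_2$; everything else is the triangle inequality and bookkeeping of absolute constants.
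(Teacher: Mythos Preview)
Your proof is correct and follows essentially the same strategy as the paper's: pass from $\alpha$ to $\theta$ via the triangle inequality (paying $\xi_\Omega$), split according to the size of $|\theta_i|$, control the large-$|\theta_i|$ part through $\|\alpha-\theta\|$, and bound the small-$|\theta_i|$ part by the weak-$l_q$ tail estimate. The paper organizes the same ingredients slightly differently---it introduces auxiliary index sets $B_i'^-$ defined by thresholds on $\theta,\eta$ (with $s_i^{1/(2-q)}$ in place of $s_{3-i}^{1/(2-q)}$), bounds the cardinality $|(B_i^-)^c\setminus(B_i'^-)^c|$ in the style of Lemma~\ref{lem:cardinality}, and then multiplies by the squared threshold---whereas you split directly on $|\theta_i|\lessgtr 2\tau$ and avoid the separate cardinality step. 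Your route is a bit more streamlined; the paper's yields $s_1^2$ (resp.\ $s_2^2$) rather than $s_1s_2$ in the leading term, but both sit under $s_1^2\vee s_2^2$, so the conclusions coincide.
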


Now we restrict our attention on the event on which the result (\ref{Range
of B}) holds, which is valid with high probability $1-Cp^{-2}$. Define index
set $D_{i}=H_{i}\backslash B_{i}$ and%
\begin{equation*}
\hat{A}_{H_{1}H_{2}}^{B_{1}B_{2}}=\left( 
\begin{array}{cc}
\hat{A}_{B_{1}B_{2}} & 0 \\ 
0 & 0%
\end{array}%
\right) _{\left\vert H_{1}\right\vert \times \left\vert H_{2}\right\vert }.
\end{equation*}

\begin{proof}[Proof of Result 2] We show the second result $\left\vert \hat{l}%
_{i}-\hat{l}_{i}^{B}\right\vert \rightarrow 0$ for $i=1,2.$ Note that $%
\hat{l}_{i}^{B}$ is the $i$th singular value of $\hat{A}_{B_{1}B_{2}}$ and
hence is also the $i$th singular value of $\hat{A}_{H_{1}H_{2}}^{B_{1}B_{2}}$%
. Applying Weyl's theorem, we obtain that%
\begin{equation}
\left\vert \hat{l}_{i}-\hat{l}_{i}^{B}\right\vert \leq \left\Vert \hat{%
A}_{H_{1}H_{2}}-\hat{A}_{H_{1}H_{2}}^{B_{1}B_{2}}\right\Vert \leq \left\Vert
\hat{A}_{D_{1}B_{2}}\right\Vert +\left\Vert \hat{A}_{B_{1}D_{2}}\right\Vert
+\left\Vert \hat{A}_{D_{1}D_{2}}\right\Vert .  \label{initial: Weyl}
\end{equation}
To bound $\left\Vert \hat{A}_{D_{1}B_{2}}\right\Vert ,$ we apply the
latent variable representation in Lemma \ref{lem:latent}
and obtain that $\hat{A}_{D_{1}B_{2}}=\sum_{j=1}^{4}G_{j}$, where%
\begin{equation*}
G_{1}=\frac{\lambda Z^{T}Z}{n}\alpha _{D_{1}}\beta _{B_{2}}^{T},\mbox{\rm  }%
G_{2}=\frac{1}{n}X_{D_{1}}^{\prime T}Y_{B_{2}}^{\prime },\mbox{\rm  }G_{3}=%
\frac{\sqrt{\lambda }\alpha _{D_{1}}Z^{T}Y_{B_{2}}^{\prime }}{n},\mbox{\rm  }%
G_{4}=\frac{\sqrt{\lambda }X_{D_{1}}^{\prime T}Z\beta _{B_{2}}^{T}}{n}.
\end{equation*}%
Now we bound $G_{j}$ separately as follows. According to Lemma \ref{lem:concen1},
Proposition \ref{lemma:: Consist (B_)^c} and the fact $\left\Vert \beta
_{B_{2}}\right\Vert \leq \left\Vert \beta \right\Vert \leq \left\Vert \hat{%
\Omega}_{2}\Sigma _{2}\right\Vert \left\Vert \eta \right\Vert =\left(
1+o(1)\right) \left\Vert \eta \right\Vert $, we obtain that with probability
$1-Cp^{-2}$
\begin{eqnarray*}
\left\Vert G_{1}\right\Vert &\leq &\lambda \left( 1+O(\sqrt{\frac{\log p}{n}}%
)\right) \left\Vert \alpha _{D_{1}}\right\Vert \left\Vert \beta
_{B_{2}}\right\Vert \leq \left( 1+o(1)\right) \lambda \left\Vert \eta
\right\Vert \left\Vert \alpha _{\left( B_{1}^{-}\right) ^{c}}\right\Vert \\
&\leq &\left( 1+o(1)\right) \lambda \left\Vert \theta \right\Vert e_{B},
\end{eqnarray*}%
where the second inequality follows from $D_{1}\subset \left(
B_{1}^{-}\right) ^{c}$. The third inequality in Lemma \ref{lem:concen2} implies that with
probability $1-Cp^{-2},$
\begin{equation*}
\left\Vert G_{2}\right\Vert \leq \left( C(\sqrt{\frac{\log p}{n}}+\sqrt{%
\frac{\left\vert H_{1}\right\vert }{n}}+\sqrt{\frac{\left\vert
H_{2}\right\vert }{n}})\right) =o(e_{B}),
\end{equation*}%
where the last inequality is due to Lemma \ref{lem:cardinality} $\left\vert
H_{i}\right\vert =O\left( s_{i}\left( \frac{\log p}{n}\right) ^{-q/2}\right)
$ and Proposition \ref{lemma:: Consist (B_)^c}. Moreover, the first two inequalities in Lemma
\ref{lem:concen3}  further imply that with probability $1-Cp^{-2},$%
\begin{eqnarray*}
\left\Vert G_{3}\right\Vert &\leq &\left( C(\sqrt{\frac{\log p}{n}}+\sqrt{%
\frac{\left\vert H_{2}\right\vert }{n}})\left\Vert \alpha
_{D_{1}}\right\Vert \right) \leq o(\left\Vert \alpha _{\left(
B_{1}^{-}\right) ^{c}}\right\Vert )=o(e_{B}), \\
\left\Vert G_{4}\right\Vert &\leq &\left( C(\sqrt{\frac{\log p}{n}}+\sqrt{%
\frac{\left\vert H_{1}\right\vert }{n}})\left\Vert \beta _{B_{2}}\right\Vert
\right) =o(e_{B}).
\end{eqnarray*}%
Combining the above four results, we obtain that $\left\Vert \hat{A}%
_{D_{1}B_{2}}\right\Vert \leq \left( 1+o(1)\right) \lambda \left\Vert \theta
\right\Vert e_{B}$ with probability $1-Cp^{-2}$. Similarly we can obtain
that $\left\Vert \hat{A}_{B_{1}D_{2}}\right\Vert \leq \left( 1+o(1)\right)
\lambda \left\Vert \theta \right\Vert e_{B}$ with probability $1-Cp^{-2}$.
To bound $\left\Vert \hat{A}_{D_{1}D_{2}}\right\Vert $, similarly we can
write $\hat{A}_{D_{1}B_{2}}=\sum_{j=1}^{4}F_{j}$, where%
\begin{equation*}
F_{1}=\frac{\lambda Z^{T}Z}{n}\alpha _{D_{1}}\beta _{D_{2}}^{T},\mbox{\rm  }%
F_{2}=\frac{1}{n}X_{D_{1}}^{\prime T}Y_{D_{2}}^{\prime },\mbox{\rm  }F_{3}=%
\frac{\sqrt{\lambda }\alpha _{D_{1}}Z^{T}Y_{D_{2}}^{\prime }}{n},\mbox{\rm  }%
F_{4}=\frac{\sqrt{\lambda }X_{D_{1}}^{\prime T}Z\beta _{D_{2}}^{T}}{n}.
\end{equation*}%
Note that $D_{i}\subset \left( B_{i}^{-}\right) ^{c}$ for $i=1,2$. Lemma \ref{lem:concen2}, Lemma \ref{lem:concen3} and Proposition \ref{lemma::
Consist (B_)^c} imply that with probability $1-Cp^{-2}$,

\begin{eqnarray*}
\left\Vert F_{1}\right\Vert &\leq &\lambda \left( 1+O(\sqrt{\frac{\log p}{n}}%
)\right) \left\Vert \alpha _{D_{1}}\right\Vert \left\Vert \beta
_{D_{2}}\right\Vert \leq C\left\Vert \alpha _{\left( B_{1}^{-}\right)
^{c}}\right\Vert \left\Vert \beta _{\left( B_{2}^{-}\right) ^{c}}\right\Vert
=o(e_{B}), \\
\left\Vert F_{2}\right\Vert &\leq &\left( C(\sqrt{\frac{\log p}{n}}+\sqrt{%
\frac{\left\vert H_{1}\right\vert }{n}}+\sqrt{\frac{\left\vert
H_{2}\right\vert }{n}})\right) =o(e_{B}), \\
\left\Vert F_{3}\right\Vert &\leq &o(\left\Vert \alpha _{\left(
B_{1}^{-}\right) ^{c}}\right\Vert )=o(e_{B}),\left\Vert F_{4}\right\Vert
\leq o(\left\Vert \beta _{\left( B_{2}^{-}\right) ^{c}}\right\Vert
)=o(e_{B}).
\end{eqnarray*}%
Hence we obtain $\left\Vert \hat{A}_{D_{1}D_{2}}\right\Vert =o(e_{B}).$
These upper bounds for $\left\Vert \hat{A}_{D_{1}D_{2}}\right\Vert ,$ $%
\left\Vert \hat{A}_{D_{1}D_{2}}\right\Vert $ and $\left\Vert \hat{A}%
_{D_{1}D_{2}}\right\Vert $, together with Equation (\ref{initial: Weyl})
imply that with probability $1-Cp^{-2}$,%
\begin{equation}
\left\vert \hat{l}_{i}-\hat{l}_{i}^{B}\right\vert \leq \left\Vert \hat{A}%
_{H_{1}H_{2}}-\hat{A}_{H_{1}H_{2}}^{B_{1}B_{2}}\right\Vert \leq \left(
1+o(1)\right) \left( \left\Vert \theta \right\Vert +\left\Vert \eta
\right\Vert \right) \lambda e_{B}=o(1),  \label{initial: result2}
\end{equation}%
where the last inequality follows from Proposition \ref{lemma:: Consist
(B_)^c} and Assumption B ($\max \left\{ \left\Vert \theta \right\Vert
,\left\Vert \eta \right\Vert \right\} \leq w^{-1/2}$). 
\end{proof}

\begin{proof}[Proof of Result 3]
We show that last result $L\left( \alpha ^{(0)},\hat{\alpha}^{ora}\right)
\rightarrow 0$ and $L\left( \beta ^{(0)},\hat{\beta}^{ora}\right)
\rightarrow 0$. Note $\alpha ^{(0)}$ and $\hat{\alpha}^{ora}\in \mathbb{R}%
^{p_{1}}$ but all entries in the index set $H_{1}^{c}$ are zeros. Hence we
only need to compare them in $\mathbb{R}^{\left\vert H_{1}\right\vert }$.
Similarly we calculate $L\left( \beta ^{(0)},\hat{\beta}^{ora}\right) $ in
space $\mathbb{R}^{\left\vert H_{2}\right\vert }$. Constraint on coordinates
in $H_{1}\times H_{2}$, $\left( \alpha ^{(0)},\beta ^{(0)}\right) $ and $%
\left( \hat{\alpha}^{ora},\hat{\beta}^{ora}\right) $ are leading pair of
singular vectors of $\hat{A}_{H_{1}H_{2}}^{B_{1}B_{2}}$ and $\hat{A}%
_{H_{1}H_{2}}$ respectively. We apply Wedin's theorem (See \cite%
{stewart1990matrix}, Theorem 4.4) to $\hat{A}_{H_{1}H_{2}}$ and $\hat{A}%
_{H_{1}H_{2}}^{B_{1}B_{2}}$ to obtain that%
\begin{eqnarray}
\max \left\{ L\left( \alpha ^{(0)},\hat{\alpha}^{ora}\right) ,L\left( \beta
^{(0)},\hat{\beta}^{ora}\right) \right\} &\leq &\sqrt{2}\max \left\{
\left\Vert \frac{\alpha ^{(0)}\left( \alpha ^{(0)}\right) ^{T}}{\left\Vert
\alpha ^{(0)}\right\Vert ^{2}}-\frac{\hat{\alpha}^{ora}\left( \hat{\alpha}%
^{ora}\right) ^{T}}{\left\Vert \hat{\alpha}^{ora}\right\Vert ^{2}}%
\right\Vert \right. ,  \notag \\
&&\left. \left\Vert \frac{\beta ^{(0)}\left( \beta ^{(0)}\right) ^{T}}{%
\left\Vert \beta ^{(0)}\right\Vert ^{2}}-\frac{\hat{\beta}^{ora}\left( \hat{%
\beta}^{ora}\right) ^{T}}{\left\Vert \hat{\beta}^{ora}\right\Vert ^{2}}%
\right\Vert \right\}  \notag \\
&\leq &\sqrt{2}\frac{\left\Vert \hat{A}_{H_{1}H_{2}}^{B_{1}B_{2}}-\hat{A}%
_{H_{1}H_{2}}\right\Vert }{\delta \left( \hat{A}_{H_{1}H_{2}}^{B_{1}B_{2}},%
\hat{A}_{H_{1}H_{2}}\right) },  \label{initial: result3}
\end{eqnarray}%
where $\delta \left( \hat{A}_{H_{1}H_{2}}^{B_{1}B_{2}},\hat{A}%
_{H_{1}H_{2}}\right) =\hat{l}_{1}^{B}-\hat{l}_{2}$. The result of Lemma \ref%
{lem:concen4} implies that $\hat{l}_{2}=o(1)$, and Result $2$ we just showed
further implies that $\hat{l}_{1}^{B}=(1+o(1))\hat{l}_{1}=(1+o(1))\lambda
\left\Vert \eta \right\Vert \left\Vert \theta \right\Vert $ with probability 
$1-Cp^{-2}$. Therefore we obtain that with probability $1-Cp^{-2}$, the
value $\delta \left( \hat{A}_{H_{1}H_{2}}^{B_{1}B_{2}},\hat{A}%
_{H_{1}H_{2}}\right) =(1+o(1))\lambda \left\Vert \eta \right\Vert \left\Vert
\theta \right\Vert $ is bounded below and above by constants according to
Assumption B. This fact, together with Equations (\ref{initial: result2})
and (\ref{initial: result3}) completes our proof%
\begin{equation*}
\max \left\{ L\left( \alpha ^{(0)},\hat{\alpha}^{ora}\right) ,L\left( \beta
^{(0)},\hat{\beta}^{ora}\right) \right\} =o(1),
\end{equation*}%
with probability $1-Cp^{-2}$. 
\end{proof}

\subsection{Proofs of Propositions}

\begin{proof}[Proof of Proposition \ref{lemma:: Concentration B}]
We first provide concentration inequality for each $\hat{a}_{ij}=\frac{1}{n}%
\sum_{k}\tilde{X}_{k,i}\tilde{Y}_{k,j}$. By the latent variable
representation, we have $\tilde{X}_{1,i}=\sqrt{\lambda }\alpha
_{i}Z_{1}+X_{1,i}^{\prime }$ and $\tilde{Y}_{1,j}=\sqrt{\lambda }\beta
_{j}Z_{1}+Y_{1,j}^{\prime }$ where $Z_{1}\sim N(0,1)$, $X_{1,i}^{\prime
}\sim N\Big(0,\mbox{\rm Var}\left( X_{1,i}^{\prime }\right) \Big)$ and $%
Y_{1,j}^{\prime }\sim N\Big(0,\mbox{\rm Var}\left( Y_{1,j}^{\prime }\right) %
\Big)$ are independent. This representation leads to%
\begin{equation*}
\hat{a}_{ij}=\frac{1}{n}\left( \lambda \alpha _{i}\beta
_{j}\sum_{k}Z_{k}^{2}+\sum_{k}X_{k,i}^{\prime }Y_{k,j}^{\prime }+\sqrt{%
\lambda }\alpha _{i}\sum_{k}Z_{k}Y_{k,j}^{\prime }+\sqrt{\lambda }\beta
_{j}\sum_{k}Z_{k}X_{k,i}^{\prime }\right) .
\end{equation*}
The consistency assumption Equation (\ref{Consistency condition}) in
Assumption B implies that $\mbox{\rm Var}\left( X_{1,i}^{\prime }\right)
\leq \mbox{\rm Var}\left( \tilde{X}_{1,i}\right) =(1+o(1))\hat{\omega}%
_{1,ii}=(1+o(1))\omega _{1,ii}$. Applying Lemma \ref{lem:concen1}, we obtain
that 
\begin{equation}
\mathbb{P}\left( \left\vert \frac{\lambda \alpha _{i}\beta _{j}}{n}%
\sum_{k}Z_{k}^{2}-a_{ij}\right\vert >\lambda \alpha _{i}\beta _{j}t\right)
\leq 2\exp \left( -\frac{3nt^{2}}{16}\right) .  \label{concentration1}
\end{equation}%
Following the line of proof in Lemma \ref{lem:concen3} and Proposition D.2
in \cite{ma2013sparse}, for $n$ and $p$ large enough (hence $n^{-1}\log
p\rightarrow 0$), we have the following concentration inequalities,%
\begin{eqnarray}
\mathbb{P}\left( \left\vert \frac{1}{n}\sum_{k}X_{k,i}^{\prime
}Y_{k,j}^{\prime }\right\vert >\left( 1+o(1)\right) b\sqrt{\hat{\omega}%
_{1,ii}\hat{\omega}_{2,jj}}\sqrt{\frac{\log p}{n}}\right) &\leq
&2p^{-b^{2}/2},  \label{concentration2} \\
\mathbb{P}\left( \left\vert \frac{1}{n}\sqrt{\lambda }\beta
_{j}\sum_{k}Z_{k}X_{k,i}^{\prime }\right\vert >\left( 1+o(1)\right) b\sqrt{%
\lambda }\left\vert \beta _{j}\right\vert \sqrt{\hat{\omega}_{1,ii}}\sqrt{%
\frac{\log p}{n}}\right) &\leq &2p^{-b^{2}/2},  \label{concentration3} \\
\mathbb{P}\left( \left\vert \frac{1}{n}\sqrt{\lambda }\alpha
_{i}\sum_{k}Z_{k}Y_{k,j}^{\prime }\right\vert >\left( 1+o(1)\right) b\sqrt{%
\lambda }\left\vert \alpha _{i}\right\vert \sqrt{\hat{\omega}_{2,jj}}\sqrt{%
\frac{\log p}{n}}\right) &\leq &2p^{-b^{2}/2}\mbox{\rm .}
\label{concentration4}
\end{eqnarray}%
Recall the definition of the adaptive thresholding level $t_{ij},$%
\begin{equation*}
\begin{array}{c}
t_{ij}=\frac{20\sqrt{2}}{9}\left( \sqrt{\lambda _{\max }\left( \hat{\Omega}%
_{1}\right) \hat{\omega}_{2,jj}}+\sqrt{\lambda _{\max }\left( \hat{\Omega}%
_{2}\right) \hat{\omega}_{1,ii}}\right. \\ 
\left. +\sqrt{\hat{\omega}_{1,ii}\hat{\omega}_{2,jj}}+\sqrt{8\lambda _{\max
}\left( \hat{\Omega}_{1}\right) \lambda _{\max }\left( \hat{\Omega}%
_{2}\right) /3}\right) .%
\end{array}%
\end{equation*}%
Applying the union bound to Equations (\ref{concentration1})-(\ref%
{concentration4}), we obtain the concentration inequality for $\hat{a}_{ij}$
as follows%
\begin{equation}
\mathbb{P}\left( \frac{\left\vert \hat{a}_{ij}-a_{ij}\right\vert }{t_{ij}}%
>\left( 1+o(1)\right) \frac{9b_{1}}{20\sqrt{2}}\sqrt{\frac{\log p}{n}}%
\right) \leq 8p^{-b_{1}^{2}/2},  \label{concentration5}
\end{equation}%
where we used the fact $\lambda \leq 1$ and $\left\vert \alpha
_{i}\right\vert \leq (1+o(1))\left\vert \theta _{i}\right\vert \leq
(1+o(1))\lambda _{\max }^{1/2}\left( \hat{\Omega}_{1}\right) $.

We finish our proof by bounding the probability $\mathbb{P}\left(
B_{i}\not\subset B_{i}^{++}\right) $ and $\mathbb{P}\left(
B_{i}^{--}\not\subset B_{i}\right) $ respectively. Let $j_{i}^{\ast }$ be an
integer such that $|a_{ij_{i}^{\ast }}|/t_{ij_{i}^{\ast
}}=\max_{j}|a_{ij}|/t_{ij}$. We apply the union bound to obtain%
\begin{eqnarray*}
\mathbb{P}\left( B_{1}^{--}\not\subset B_{1}\right) &\leq &\sum_{i\in
B_{1}^{--}}\mathbb{P}\left\{ \max_{_{j}}\frac{|{\hat{a}}_{ij}|}{t_{ij}}\leq 
\sqrt{\frac{\log p}{n}}\right\} \\
&\leq &\sum_{i\in B_{1}^{--}}\mathbb{P}\left\{ |\hat{a}_{ij_{i}^{\ast
}}|/t_{ij_{i}^{\ast }}\leq \sqrt{\frac{\log p}{n}}\right\} \\
&\leq &\sum_{i\in B_{1}^{--}}\mathbb{P}\left\{ |a_{ij_{i}^{\ast }}-\hat{a}%
_{ij_{i}^{\ast }}|/t_{ij_{i}^{\ast }}>(\kappa _{+}-1)\sqrt{\frac{\log p}{n}}%
\right\} \\
&\leq &p8p^{-b_{1}^{2}/2}\leq Cp^{-3},
\end{eqnarray*}%
where the last inequality follows from Equation (\ref{concentration5}) with $%
b_{1}=2\sqrt{2}$ and $\kappa _{+}=2.$ Similarly, we apply the union bound to
obtain%
\begin{eqnarray*}
\mathbb{P}\left( B_{1}\not\subset B_{1}^{++}\right) &\leq &\sum_{i\in \left(
B_{1}^{++}\right) ^{c}}\mathbb{P}\left\{ \max_{_{j}}\frac{|{\hat{a}}_{ij}|}{%
t_{ij}}\newline
>\sqrt{\frac{\log p}{n}}\right\} \\
&\leq &\sum_{i\in \left( B_{1}^{++}\right) ^{c}}\sum_{j=1}^{p_{2}}\mathbb{P}%
\left\{ |{\hat{a}}_{ij}|/t_{ij}>\sqrt{\frac{\log p}{n}}\right\} \\
&\leq &\sum_{i\in \left( B_{1}^{++}\right) ^{c}}\sum_{j=1}^{p_{2}}\mathbb{P}%
\left\{ |{\hat{a}}_{ij}-{a}_{ij}|/t_{ij}>(1-\kappa _{-})\sqrt{\frac{\log p}{n%
}}\right\} \\
&\leq &p^{2}8p^{-b_{1}^{2}/2}\leq Cp^{-2},
\end{eqnarray*}%
where the last inequality follows from Equation (\ref{concentration5}) with $%
b_{1}=2\sqrt{2}$ and $\kappa _{-}=0.09$. We can obtain the bounds for $%
\mathbb{P}\left( B_{2}\not\subset B_{2}^{++}\right) $ and $\mathbb{P}\left(
B_{2}^{--}\not\subset B_{2}\right) $, which finish our proof. 
\end{proof}

\begin{proof}[Proof of Proposition \ref{lemma:: Concentration B part2}]
Recall $a_{ij}=\lambda \alpha _{i}\beta _{j}$. To show $B_{1}^{++}\subset
H_{1}$, we only need to show that $\frac{\kappa _{-}\min_{i,j}t_{ij}}{%
\lambda \max_{j}\left\vert \beta _{j}\right\vert }>\delta _{1},$ noting $%
p_{1}\leq p$. To see this, the key part is to bound $\max_{j}\left\vert
\beta _{j}\right\vert $ from above. In fact, Assumption B implies $%
\max_{j}\left\vert \beta _{j}\right\vert \leq \left\Vert \beta \right\Vert
\leq \left\Vert \hat{\Omega}_{2}\Sigma _{2}\right\Vert \left\Vert \eta
\right\Vert =\left( 1+o(1)\right) w^{-1/2}$ with probability $1-Cp^{-2}.$
Therefore this upper bound of $\left\Vert \beta \right\Vert $, the
definition of $\delta _{1}$ and $\lambda \leq 1$ imply that, 
\begin{equation*}
\frac{\kappa _{-}\min_{j}t_{ij}}{\lambda \max_{j}\left\vert \beta
_{j}\right\vert }\geq 0.08w^{1/2}\min_{i,j}t_{ij}=\delta _{1},
\end{equation*}%
where the last equation follows from the definition of $\delta _{1}$.
Similarly, we can show that $B_{2}^{++}\subset H_{2}$ with probability $%
1-Cp^{-2}$.

To show $B_{1}^{--}\subset B_{1}^{-}$, we only need to show that $\phi
_{1}s_{2}^{\frac{1}{2-q}}>\frac{\kappa _{+}\max_{i,j}t_{ij}}{\lambda
\max_{j}\left\vert \beta _{j}\right\vert }$. This time the key part is to
bound $\max_{j}\left\vert \beta _{j}\right\vert $ from below. Note that $%
\left\Vert \beta \right\Vert \geq \left( 1-o(1)\right) \left\Vert \eta
\right\Vert \geq \left( 1-o(1)\right) W^{-1/2}$ follows from Assumption B.
For any positive integer $k$, we denote $I_{k}$ as the index set of the
largest $k$ coordinates of $\eta $ in magnitude. Then we have with
probability $1-Cp^{-2},$ 
\begin{eqnarray*}
0.9W^{-1/2} &\leq &\left\Vert \beta \right\Vert \leq k\max_{j}\left\vert
\beta _{j}\right\vert ^{2}+\sum_{j\in I_{k}^{c}}\left\vert \beta
_{j}\right\vert ^{2} \\
&\leq &k\max_{j}\left\vert \beta _{j}\right\vert ^{2}+\sum_{j\in
I_{k}^{c}}\left\vert \eta _{j}\right\vert ^{2}+\left\Vert \beta -\eta
\right\Vert  \\
&\leq &k\max_{j}\left\vert \beta _{j}\right\vert ^{2}+\frac{q}{2-q}%
s_{2}^{2/q}k^{1-2/q}+o(1).
\end{eqnarray*}%
Picking $k_{0}=\left\lceil s_{2}^{\frac{2}{2-q}}\left( 2W^{1/2}\right) ^{%
\frac{q}{2-q}}\right\rceil $, the Equation above implies that $%
k_{0}\max_{j}\left\vert \beta _{j}\right\vert ^{2}\geq 0.3W^{-1/2}$.
Consequently, we get a lower bound $\max_{j}\left\vert \beta _{j}\right\vert
\geq \sqrt{C_{l}}s_{2}^{-\frac{1}{2-q}}$, where constant $C_{l}=\frac{0.6}{%
\sqrt{W}}\left( 2W^{1/2}\right) ^{\frac{q}{2-q}}$. We complete our proof by
noting that the lower bound of $\max_{j}\left\vert \beta _{j}\right\vert $,
the definition of $\phi $ and Assumption $\lambda >C_{\lambda }$ imply 
\begin{equation*}
\frac{\kappa _{+}\max_{i,j}t_{ij}}{\lambda \max_{j}\left\vert \beta
_{j}\right\vert }\leq 2\max_{i,j}t_{ij}C_{\lambda }^{-1}C_{l}^{-1/2}s_{2}^{%
\frac{1}{2-q}}=\phi s_{2}^{\frac{1}{2-q}},
\end{equation*}%
where the last equation follows from the definition of constant $\phi $.
Similarly, we can show $B_{2}^{--}\subset B_{2}^{-}$ with probability $%
1-Cp^{-2}.$ 
\end{proof}

\begin{proof}[Proof of Proposition \ref{lemma:: Consist (B_)^c}]
Note we only assume $\theta $ and $\eta $ are in the weak $l_{q}$ ball.
Define the relatively weak signal coordinates of $\theta $ and $\eta $ as 
\begin{equation*}
B_{1}^{\prime -}=\left\{ i,\left\vert \theta _{i}\right\vert >\frac{\phi }{2}%
\sqrt{\frac{\log p}{n}}s_{1}^{\frac{1}{2-q}}\right\} ,B_{2}^{\prime
-}=\left\{ i,\left\vert \eta _{i}\right\vert >\frac{\phi }{2}\sqrt{\frac{%
\log p}{n}}s_{2}^{\frac{1}{2-q}}\right\} .
\end{equation*}%
We need the bound of cardinality of $\left( B_{i}^{-}\right) ^{c}-\left(
B_{i}^{\prime -}\right) ^{c}$. Following the lines of the proof of Lemma \ref%
{lem:cardinality}, we have that 
\begin{equation}
\left\vert \left( B_{i}^{-}\right) ^{c}-\left( B_{i}^{\prime -}\right)
^{c}\right\vert \leq C\left[ s_{i}^{\left( 1-\frac{q}{2-q}\right) }\left( 
\frac{\log p}{n}\right) ^{-q/2}+\xi _{\Omega }^{2}\left( \frac{\log p}{n}%
\right) ^{-1}s_{i}^{-\frac{2}{2-q}}\right] \mbox{\rm ,}
\label{bound card B-}
\end{equation}%
where we use the fact $\left\Vert \theta -\alpha \right\Vert \leq \left\Vert 
\hat{\Omega}_{1}-\Omega _{1}\right\Vert \left\Vert \Sigma _{1}\alpha
\right\Vert \leq C\xi _{\Omega }$ by the Assumption B. Now we bound $%
\left\Vert \alpha _{\left( B_{1}^{-}\right) ^{c}}\right\Vert ^{2}=\sum_{i\in
\left( B_{1}^{-}\right) ^{c}}\alpha _{i}^{2}$ as follows, 
\begin{eqnarray*}
\left\Vert \alpha _{\left( B_{1}^{-}\right) ^{c}}\right\Vert  &\leq
&\left\Vert \theta _{\left( B_{1}^{-}\right) ^{c}}\right\Vert +C\xi _{\Omega
}\leq \left\Vert \theta _{\left( B_{1}^{-}\right) ^{c}-\left( B_{1}^{\prime
-}\right) ^{c}}\right\Vert +\left\Vert \theta _{\left( B_{1}^{\prime
-}\right) ^{c}}\right\Vert +C\xi _{\Omega } \\
&\leq &\left\Vert \alpha _{\left( B_{1}^{-}\right) ^{c}-\left( B_{1}^{\prime
-}\right) ^{c}}\right\Vert +\left\Vert \theta _{\left( B_{1}^{\prime
-}\right) ^{c}}\right\Vert +2C\xi _{\Omega } \\
&\leq &\left[ C_{1}\left( \frac{\log p}{n}\right) ^{1-q/2}s_{1}^{2}+C_{2}\xi
_{\Omega }^{2}\right] ^{1/2},
\end{eqnarray*}%
since we can bound the first two terms by Equation (\ref{bound card B-}) and
weak $l_{q}$ ball assumption on $\theta $ as follows, 
\begin{eqnarray*}
\left\Vert \alpha _{\left( B_{1}^{-}\right) ^{c}-\left( B_{1}^{\prime
-}\right) ^{c}}\right\Vert ^{2} &\leq &\phi ^{2}\frac{\log p}{n}s_{1}^{\frac{%
2}{2-q}}\left\vert \left( B_{i}^{-}\right) ^{c}-\left( B_{i}^{\prime
-}\right) ^{c}\right\vert \leq C\left( \xi _{\Omega }^{2}+s_{1}^{2}\left( 
\frac{\log p}{n}\right) ^{1-q/2}\right) , \\
\left\Vert \theta _{\left( B_{1}^{\prime -}\right) ^{c}}\right\Vert ^{2}
&\leq &\sum_{i\in \left( B_{1}^{\prime -}\right) ^{c}}\theta _{i}^{2}\leq
\sum_{k}\left( \frac{\phi }{2}\right) ^{2}\frac{\log p}{n}s_{1}^{\frac{2}{2-q%
}}\wedge s_{1}^{2/q}k^{-2/q} \\
&\leq &C\left( \frac{\log p}{n}\right) ^{1-q/2}s_{1}^{2}\mbox{\rm .}
\end{eqnarray*}%
Similarly, we can obtain that $\left\Vert \beta _{\left( B_{2}^{-}\right)
^{c}}\right\Vert ^{2}=\sum_{i\in \left( B_{2}^{-}\right) ^{c}}\beta
_{i}^{2}\leq C_{1}\left( \frac{\log p}{n}\right) ^{1-q/2}s_{2}^{2}+C_{2}\xi
_{\Omega }^{2}$. Therefore we finished the proof of Equation (\ref{e_B Bound}%
) and the lemma. 
\end{proof}

\section{Proof of Lemma \protect\ref{lem:approxbias}}

In this section, we are going to show that the first pair of singular
vectors of $\hat{A}^{ora}$ is close to the first pair of singular vectors of 
$A=\lambda\alpha\beta^T$. We introduce an intermediate step by involving
another matrix 
\begin{equation*}
A^{ora}=%
\begin{pmatrix}
\lambda\alpha_{H_1}\beta_{H_2}^T & 0 \\ 
0 & 0%
\end{pmatrix}%
.
\end{equation*}
It is easy to see $(\alpha/||\alpha||,\beta/||\beta||)$ is the first pair of
singular vectors of $A$ and $(\alpha^{ora}/||\alpha^{ora}||,\beta^{ora}/||%
\beta^{ora}||)$ is the first pair of singular vectors of $A^{ora}$, where 
\begin{equation*}
\alpha^{ora}=%
\begin{pmatrix}
\alpha_{H_1} \\ 
0%
\end{pmatrix}%
,\quad \beta^{ora}=%
\begin{pmatrix}
\beta_{H_2} \\ 
0%
\end{pmatrix}%
.
\end{equation*}
Let $(\hat{\alpha}^{ora},\hat{\beta}^{ora})$ be the first pair of singular
vectors of $\hat{A}^{ora}$. Then, we have 
\begin{equation*}
L(\hat{\alpha}^{ora},\alpha) \leq L(\hat{\alpha}^{ora},\alpha^{ora})+L(%
\alpha^{ora},\alpha).
\end{equation*}
We have a similar inequality for $\beta$. We present a deterministic lemma
before proving the results.

\begin{lemma}
\label{lem:1st} We have 
\begin{eqnarray*}
L(\hat{\alpha}^{ora},\alpha ) &\leq &\frac{\sqrt{2}||\hat{A}%
_{H_{1}H_{2}}-A_{H_{1}H_{2}}||}{\hat{l}_{1}}+\frac{8\sqrt{2}||\theta -\alpha
||}{0.9W^{-1/2}} \\
&&+\frac{2\sqrt{2}}{0.9W^{-1/2}}\Bigg(2^{\frac{q+1}{2}}+\sqrt{\frac{2}{2-q}}%
\Bigg)\left( \delta _{1}^{2-q}s_{1}\Bigg(\frac{\log p_{1}}{n}\Bigg)%
^{1-q/2}\right) ^{1/2}, \\
L(\hat{\beta}^{ora},\beta ) &\leq &\frac{\sqrt{2}||\hat{A}%
_{H_{1}H_{2}}-A_{H_{1}H_{2}}||}{\hat{l}_{1}}+\frac{8\sqrt{2}||\eta -\beta ||%
}{0.9W^{-1/2}} \\
&&+\frac{2\sqrt{2}}{0.9W^{-1/2}}\Bigg(2^{\frac{q+1}{2}}+\sqrt{\frac{2}{2-q}}%
\Bigg)\left( \delta _{2}^{2-q}s_{2}\Bigg(\frac{\log p_{2}}{n}\Bigg)%
^{1-q/2}\right) ^{1/2}.
\end{eqnarray*}
\end{lemma}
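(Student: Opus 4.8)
The plan is to use the triangle inequality stated just above, $L(\hat\alpha^{ora},\alpha)\le L(\hat\alpha^{ora},\alpha^{ora})+L(\alpha^{ora},\alpha)$, and bound the two summands by completely different arguments: the first is a deterministic singular-vector perturbation bound, and the second quantifies the bias from restricting $\alpha$ to the strong-signal set $H_1$. The bound for $\beta$ then follows verbatim after swapping $(H_1,L_1,H_1',L_1',\theta,\Sigma_1,\delta_1,p_1)$ for $(H_2,L_2,H_2',L_2',\eta,\Sigma_2,\delta_2,p_2)$, so I only describe the argument for $\alpha$. Throughout I use Assumption~B and the standing conditioning event $\xi_\Omega=o(1)$.

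For the first summand, note that $\hat A^{ora}$ and $A^{ora}$ are block-diagonal with the only nonzero block being $\hat A_{H_1H_2}$ and $A_{H_1H_2}=\lambda\alpha_{H_1}\beta_{H_2}^T$ respectively, so $\hat\alpha^{ora}$ and $\alpha^{ora}$ are both supported on $H_1$, and after restriction to those coordinates they are the leading left singular vectors of $\hat A_{H_1H_2}$ and of the rank-one matrix $A_{H_1H_2}$; hence the angle between them may be computed in $\mathbb{R}^{|H_1|}$. Writing $E=\hat A_{H_1H_2}-A_{H_1H_2}$ and $\hat l_1\hat\alpha^{ora}=A_{H_1H_2}\hat\beta^{ora}+E\hat\beta^{ora}$ on $H_1$, I project onto the orthogonal complement of $\alpha_{H_1}$: the column space of $A_{H_1H_2}$ is spanned by $\alpha_{H_1}$, so that term drops out and $\hat l_1\,|\sin\angle(\hat\alpha^{ora},\alpha^{ora})|\le\|E\hat\beta^{ora}\|\le\|E\|$. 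This gives $L(\hat\alpha^{ora},\alpha^{ora})\le\sqrt2\,\|\hat A_{H_1H_2}-A_{H_1H_2}\|/\hat l_1$; equivalently, this is Wedin's $\sin$-theta theorem applied to the rank-one $A_{H_1H_2}$, whose relevant spectral gap against $\hat A_{H_1H_2}$ is $\hat l_1-0=\hat l_1$.

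For the second summand, $\alpha^{ora}$ is exactly $\alpha$ with the $L_1$-coordinates zeroed, so a direct computation gives $|\sin\angle(\alpha^{ora},\alpha)|=\|\alpha_{L_1}\|/\|\alpha\|$, hence $L(\alpha^{ora},\alpha)=\sqrt2\,\|\alpha_{L_1}\|/\|\alpha\|$. For the denominator, Assumption~B yields $\|\theta\|\ge W^{-1/2}$ (from $1=\theta^T\Sigma_1\theta\le W\|\theta\|^2$) and $\|\alpha-\theta\|\le\xi_\Omega\|\theta\|=o(\|\theta\|)$, so $\|\alpha\|\ge 0.9\,W^{-1/2}$ for $n,p$ large. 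For the numerator I use $\|\alpha_{L_1}\|\le\|\theta_{L_1}\|+\|\alpha-\theta\|$ and split $L_1=(L_1\cap L_1')\cup(H_1'\setminus H_1)$: on $L_1\cap L_1'$ every $|\theta_k|<\tfrac12\delta_1\sqrt{\log p_1/n}$, so the elementary weak-$\ell_q$ tail estimate $\sum_{k:|\theta_k|<t}\theta_k^2\le\tfrac{2}{2-q}s_1t^{2-q}$ bounds that part by a multiple of $\delta_1^{2-q}s_1(\log p_1/n)^{1-q/2}$; on $H_1'\setminus H_1$ each $|\alpha_k|<\delta_1\sqrt{\log p_1/n}$, so $\theta_k^2\le 2\delta_1^2\tfrac{\log p_1}{n}+2(\theta_k-\alpha_k)^2$, and summing with the cardinality bound $|H_1'\setminus H_1|=|L_1-L_1'|$ from Lemma~\ref{lem:cardinality} contributes another multiple of $\delta_1^{2-q}s_1(\log p_1/n)^{1-q/2}$ plus a multiple of $\|\theta-\alpha\|^2$. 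Collecting the pieces gives $\|\alpha_{L_1}\|^2\le C_1\delta_1^{2-q}s_1(\log p_1/n)^{1-q/2}+C_2\|\theta-\alpha\|^2$ with explicit constants; taking square roots, dividing by $0.9\,W^{-1/2}$, and checking $\sqrt{C_1}\le 2^{(q+1)/2}+\sqrt{2/(2-q)}$ and $1+\sqrt{C_2}\le 8$ yields the last two terms in the claimed bound.

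The perturbation step is routine. The main obstacle is the truncation term: because $L_1$ is defined through $\alpha$ rather than $\theta$, the weak-$\ell_q$ tail bound cannot be applied directly, and passing to tail bounds for $\theta$ requires the split into $L_1\cap L_1'$ and $H_1'\setminus H_1$ together with the cardinality estimate of Lemma~\ref{lem:cardinality}, all while tracking how the nuisance error $\|\theta-\alpha\|$ propagates and keeping the final constants crude enough to land at $8\sqrt2$ and $2\sqrt2\bigl(2^{(q+1)/2}+\sqrt{2/(2-q)}\bigr)$.
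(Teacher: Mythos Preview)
Your proposal is correct and follows essentially the same route as the paper: triangle inequality into $L(\hat\alpha^{ora},\alpha^{ora})+L(\alpha^{ora},\alpha)$, a Wedin/perturbation bound against the rank-one $A_{H_1H_2}$ for the first term (giving $\sqrt2\|\hat A_{H_1H_2}-A_{H_1H_2}\|/\hat l_1$), and a truncation bound $\lesssim\|\alpha_{L_1}\|/\|\alpha\|$ for the second, with $\|\alpha_{L_1}\|$ controlled by splitting $L_1$ along $L_1'$ and invoking the weak-$\ell_q$ tail estimate together with Lemma~\ref{lem:cardinality}.

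Two minor cosmetic differences worth noting. First, you use the exact identity $|\sin\angle(\alpha^{ora},\alpha)|=\|\alpha_{L_1}\|/\|\alpha\|$, whereas the paper uses the cruder $L(\alpha^{ora},\alpha)\le 2\sqrt2\,\|\alpha-\alpha^{ora}\|/\|\alpha\|$; your sharper factor compensates for the slightly larger constants your $\theta_k^2\le 2\alpha_k^2+2(\theta_k-\alpha_k)^2$ trick produces. Second, on the set $L_1-L_1'=H_1'\setminus H_1$ the paper switches from $\theta$ back to $\alpha$ (using $\|\theta_{L_1-L_1'}\|\le\|\alpha_{L_1-L_1'}\|+\|\theta-\alpha\|$ and then $|\alpha_k|<\delta_1\sqrt{\log p_1/n}$ directly), while you bound $\theta_k^2$ pointwise; both work. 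Your final sentence collapses the bound into a single $\|\alpha_{L_1}\|^2\le C_1(\cdots)+C_2\|\theta-\alpha\|^2$ and then checks constants, which is slightly loose bookkeeping (you actually have a three-term sum for $\|\alpha_{L_1}\|$, not its square), but the displayed inequality in the lemma is generous enough that either accounting lands comfortably inside it.
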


\begin{proof}
Starting with the above triangle inequality, we need to bound $L(\hat{\alpha}%
^{ora},\alpha ^{ora})$ and $L(\alpha ^{ora},\alpha )$. For the first term,
we use Wedin's sin-theta theorem (Theorem 4.4 in \cite{stewart1990matrix}).
\begin{eqnarray}
L(\hat{\alpha}^{ora},\alpha ^{ora}) &\leq &\sqrt{2}\left\Vert \frac{\hat{%
\alpha}^{ora}\hat{\alpha}^{ora,T}}{||\hat{\alpha}^{ora}||^{2}}-\frac{\alpha
^{ora}\alpha ^{ora,T}}{||\alpha ^{ora}||^{2}}\right\Vert   \notag \\
&\leq &\frac{\sqrt{2}\left\Vert \hat{A}_{H_{1}H_{2}}\frac{\beta _{H_{2}}}{%
||\beta _{H_{2}}||}-\lambda ||\beta _{H_{2}}||\alpha _{H_{1}}\right\Vert }{%
\hat{l}_{1}}  \notag \\
&\leq &\frac{\sqrt{2}||\hat{A}_{H_{1}H_{2}}-A_{H_{1}H_{2}}||}{\hat{l}_{1}},
\label{equ 1 in proof lemma one}
\end{eqnarray}%
where we also applied the fact $l_{2}=0$ in Lemma \ref{lem:concen4}. For the
second term, we have
\begin{eqnarray}
L(\alpha ^{ora},\alpha ) &\leq &\sqrt{2}\left\Vert \frac{\alpha ^{ora}}{%
||\alpha ^{ora}||}-\frac{\alpha }{||\alpha ||}\right\Vert   \notag \\
&\leq &\frac{2\sqrt{2}||\alpha -\alpha ^{ora}||}{||\alpha ||}  \notag \\
&\leq &\frac{2\sqrt{2}||\alpha _{L_{1}}||}{0.9W^{-1/2}},
\label{equ 2 in proof of of lemma one}
\end{eqnarray}%
where the last inequality follows from Assumption B which leads to $||\alpha
||=\left( 1+o(1)\right) ||\theta ||\geq 0.9||\theta ||\geq 0.9W^{-1/2}$.
Notice that
\begin{eqnarray*}
||\alpha _{L_{1}}|| &\leq &||\theta _{L_{1}}||+||\theta -\alpha || \\
&\leq &||\theta _{L_{1}-L_{1}^{\prime }}||+||\theta _{L_{1}^{\prime
}}||+||\theta -\alpha || \\
&\leq &||\alpha _{L_{1}-L_{1}^{\prime }}||+||\theta _{L_{1}^{\prime
}}||+2||\theta -\alpha || \\
&\leq &\Bigg(2^{\frac{1+q}{2}}+\sqrt{\frac{2}{2-q}}\Bigg)\left( \delta
_{1}^{2-q}s_{1}\Bigg(\frac{\log p_{1}}{n}\Bigg)^{1-q/2}\right)
^{1/2}+4||\theta -\alpha ||,
\end{eqnarray*}%
because $||\alpha _{L_{1}-L_{1}^{\prime }}||^{2}\leq \delta _{1}^{2}\frac{%
\log p_{1}}{n}|L_{1}-L_{1}^{\prime }|\leq 2^{1+q}\delta _{1}^{2-q}s_{1}\Bigg(%
\frac{\log p_{1}}{n}\Bigg)^{1-q/2}+4||\theta -\alpha ||^{2}$, and
\begin{eqnarray*}
||\theta _{L_{1}^{\prime }}||^{2} &=&\sum_{k\in L_{1}^{\prime }}\theta
_{k}^{2} \\
&\leq &\sum_{k=1}^{p_{1}}\Bigg((\delta _{1}/2)^{2}\frac{\log p_{1}}{n}\wedge
s_{1}^{2/q}k^{-2/q}\Bigg) \\
&\leq &\int_{0}^{\infty }\Bigg((\delta _{1}/2)^{2}\frac{\log p_{1}}{n}\Bigg)%
\wedge s_{1}^{2/q}x^{-2/q}dx \\
&\leq &(\delta _{1}/2)^{2-q}s_{1}\Bigg(\frac{\log p_{1}}{n}\Bigg)^{1-q/2}+%
\frac{q}{2-q}(\delta _{1}/2)^{2-q}s_{1}\Bigg(\frac{\log p_{1}}{n}\Bigg)%
^{1-q/2} \\
&\leq &\frac{2}{2-q}\delta _{1}^{2-q}s_{1}\Bigg(\frac{\log p_{1}}{n}\Bigg)%
^{1-q/2}.
\end{eqnarray*}%
Combining Equations (\ref{equ 1 in proof lemma one}) and (\ref{equ 2 in
proof of of lemma one}), together with the bounds above, we finish our proof
for $L(\hat{\alpha}^{ora},\alpha ).$ Similar analysis works for $L(\hat{\beta%
}^{ora},\beta )$ and we omit the details. 
\end{proof}

\begin{proof}[Proof of Lemma \ref{lem:approxbias}]
The proof directly follows from the deterministic bound in Lemma \ref{lem:1st} and the probabilistic bound in Lemma \ref{lem:concen4}.
\end{proof}

\section{Proofs of Lemma \protect\ref{lem:iteration} and Lemma \protect\ref%
{lem:eigengap}}

In this section, we are going to give the convergence bound for the
Algorithms \ref{alg:: ITSCCA} and \ref{alg:: CTSCCA} applied on the oracle
matrix $\hat{A}^{ora}$. According to Lemma \ref{lemma:: Initial}, we obtain
that the initializer applied to the oracle matrix $\hat{A}^{ora}$ by
Algorithm \ref{alg:: CTSCCA} is idential to that applied to data matrix $%
\hat{A}$, i.e. $(\alpha ^{(0)},\beta ^{(0)})=(\alpha ^{(0),ora},\beta
^{(0),ora})$ since $B_{i}\subset H_{i}$. As a consequence, the properties in
Lemma \ref{lemma:: Initial} also hold for $(\alpha ^{(0),ora},\beta
^{(0),ora})$.

The following three lemmas are helpful for us to understand the convergence
of oracle sequence $(\alpha ^{(k),ora},\beta ^{(k),ora})$ obtained from
Algorithm \ref{alg:: ITSCCA}. By definition, the initializer satisfies $%
\alpha _{L_{1}}^{(0),ora}=0$ and $\beta _{L_{2}}^{(0),ora}=0$. Then,
Algorithm \ref{alg:: ITSCCA} only involves the sub-matrix $\hat{A}%
_{H_{1}H_{2}}$. We first state the basic one-step analysis for power SVD
method in the following lemma.

\begin{lemma}
\label{lem:powerSVD} Let $A$ be a matrix with first and second eigenvalues $%
(l_1,l_2)$ satisfying $|l_1|>|l_2|$. Let $(u,v)$ be the first pair of
singular vectors and $(\alpha,\beta)$ be any vectors. Define $\bar{\alpha}%
=A\beta$ and $\bar{\beta}=A^T\alpha$. We have 
\begin{equation*}
\frac{1}{2}L(\bar{\alpha},u)^2\leq\frac{\frac{1}{2}L(\beta,v)^2}{1-\frac{1}{2%
}L(\beta,v)^2}\left|\frac{l_2}{l_1}\right|^2,
\end{equation*}
\begin{equation*}
\frac{1}{2}L(\bar{\beta},v)^2\leq\frac{\frac{1}{2}L(\alpha,v)^2}{1-\frac{1}{2%
}L(\alpha,v)^2}\left|\frac{l_2}{l_1}\right|^2.
\end{equation*}
\end{lemma}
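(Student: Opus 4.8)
The plan is to pass from the loss $L$ to the sine of an angle and then compute everything in the singular basis of $A$. From the Frobenius-norm expression for $L$ stated earlier one has, for nonzero vectors $a,b$, the identity $\frac12 L(a,b)^2 = 1 - (a^Tb)^2/(\|a\|^2\|b\|^2) = \sin^2\angle(a,b)$, so $1-\frac12 L(a,b)^2 = \cos^2\angle(a,b)$ and the two right-hand sides in the lemma are exactly $\tan^2\angle(\beta,v)\,|l_2/l_1|^2$ and $\tan^2\angle(\alpha,u)\,|l_2/l_1|^2$. I would then fix the SVD $A=\sum_i l_i u_i v_i^T$ with $u_1=u$, $v_1=v$, the left family $\{u_i\}$ and the right family $\{v_i\}$ orthonormal, and $|l_1|>|l_2|\ge|l_3|\ge\cdots$, and expand $\beta=\sum_i c_i v_i+\beta_\perp$, where $\beta_\perp$ is orthogonal to every $v_i$ and hence annihilated by $A$; consequently $\bar\alpha=A\beta=\sum_i l_i c_i u_i$.

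The computation is then two short steps. First, since $\{u_i\}$ is orthonormal and $u=u_1$,
\[
\sin^2\angle(\bar\alpha,u)=\frac{\sum_{i\ge2}l_i^2c_i^2}{\sum_{i\ge1}l_i^2c_i^2}\le\frac{l_2^2\sum_{i\ge2}c_i^2}{l_1^2c_1^2}=\left|\frac{l_2}{l_1}\right|^2\frac{\sum_{i\ge2}c_i^2}{c_1^2},
\]
using $|l_i|\le|l_2|$ for $i\ge2$ in the numerator and keeping only the $i=1$ term in the denominator. Second, since $\beta^Tv=c_1$, $\|v\|=1$, and $\|\beta\|^2=c_1^2+\sum_{i\ge2}c_i^2+\|\beta_\perp\|^2$,
\[
\frac{\sum_{i\ge2}c_i^2}{c_1^2}\le\frac{\|\beta\|^2-c_1^2}{c_1^2}=\frac{\sin^2\angle(\beta,v)}{\cos^2\angle(\beta,v)}=\frac{\frac12 L(\beta,v)^2}{1-\frac12 L(\beta,v)^2}.
\]
Chaining the two displays and using $\frac12 L(\bar\alpha,u)^2=\sin^2\angle(\bar\alpha,u)$ gives the first inequality. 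The second follows by running the identical argument on $A^T=\sum_i l_i v_i u_i^T$, whose leading singular pair is $(v,u)$ with the same singular values: expand $\alpha$ in the orthonormal left family $\{u_i\}$, use $\bar\beta=A^T\alpha$, and the bounding quantity appearing on the right is $\frac12 L(\alpha,u)^2/(1-\frac12 L(\alpha,u)^2)$ — so the stated right-hand side of the second bound should be read with $L(\alpha,u)$ in place of $L(\alpha,v)$ (the latter not even being well defined when $p_1\neq p_2$).

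There is no substantive obstacle here, only bookkeeping about degenerate cases. If $c_1=\beta^Tv=0$ then $\cos^2\angle(\beta,v)=0$, i.e.\ $\frac12 L(\beta,v)^2=1$, the right-hand side is $+\infty$, and the bound is vacuous, so one simply states it under the convention $\frac12 L(\beta,v)^2<1$. In the non-degenerate case $c_1\neq0$ one automatically has $\bar\alpha=A\beta\neq0$, since its component $l_1c_1u_1$ along $u$ is nonzero, so $\angle(\bar\alpha,u)$ is well defined and the division by $\|\bar\alpha\|^2$ is legitimate. Every other step is an exact identity or a one-sided estimate in the required direction, so once the singular-basis expansion is set up the proof reduces to the two displays above.
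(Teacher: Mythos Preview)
Your proof is correct and is exactly the standard power-method argument that the paper has in mind: the paper itself omits the proof entirely, writing only that it ``is almost identical to the proof of Theorem 8.2.2 in \cite{golub1996matrix},'' and what you have written is precisely that argument specialized to the rectangular SVD setting. Your observation about the typo is also right: in the second inequality the paper's $L(\alpha,v)$ should read $L(\alpha,u)$, as $\alpha$ and $v$ live in different ambient spaces when $p_1\neq p_2$; this is confirmed by the way the lemma is actually used in the proof of Lemma~\ref{lem:2ndprep}, where $\hat\alpha$ (the left singular vector) appears in the corresponding slot.
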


\begin{proof}
We omit the proof because it is almost identical to the proof of Theorem 8.2.2 in \cite{golub1996matrix}.
\end{proof}

Applying the above lemma in our case on the matrix $\hat{A}_{H_{1}H_{2}}$
and with unit-vector initializers $(\alpha _{H_{1}}^{(0)},\beta
_{H_{2}}^{(0)})$. For simplicity of notations, we drop the subscript and
write $\hat{A}$ and $(\alpha ^{(0)},\beta ^{(0)})$ in this section. At the $%
k $-th step, we write 
\begin{equation*}
\bar{\alpha}^{(k)}=\hat{A}\beta ^{(k-1)},\quad \bar{\beta}^{(k)}=\hat{A}%
^{T}\alpha ^{(k-1)},
\end{equation*}%
\begin{equation*}
\alpha ^{(k)}=\frac{T(\bar{\alpha}^{(k)},\gamma _{1})}{||T(\bar{\alpha}%
^{(k)},\gamma _{1})||},\quad \beta ^{(k)}=\frac{T(\bar{\beta}^{(k)},\gamma
_{2})}{||T(\bar{\beta}^{(k)},\gamma _{2})||}.
\end{equation*}%
Now we give a one-step analysis for Algorithm \ref{alg:: ITSCCA}.

\begin{lemma}
\label{lem:2ndprep} Let $(\hat{\alpha},\hat{\beta})$ be the first pair of
singular vectors of $\hat{A}$ and $(\hat{l}_1,\hat{l}_2)$ be the first and
second singular values. We assume 
\begin{equation*}
\frac{1}{2}L(\alpha^{(0)},\hat{\alpha})^2\leq\frac{1}{2},\quad \frac{1}{2}%
L(\beta^{(0)},\hat{\beta})^2\leq\frac{1}{2},
\end{equation*}
and 
\begin{equation*}
\left|\frac{\hat{l}_2}{\hat{l}_1}\right|^2+\frac{8\big(\gamma_1^2|H_1|\vee%
\gamma_2^2|H_2|\big)}{|\hat{l}_1|^2}\leq\frac{1}{4}.
\end{equation*}
Then, we have 
\begin{equation*}
\frac{1}{4}L({\alpha}^{(k)},\hat{\alpha})^2\leq L(\beta^{(k-1)},\hat{\beta}%
)^2\left|\frac{\hat{l}_2}{\hat{l}_1}\right|^2+\frac{8\gamma_1^2|H_1|}{|\hat{l%
}_1|^2},
\end{equation*}
\begin{equation*}
\frac{1}{4}L({\beta}^{(k)},\hat{\beta})^2\leq L(\alpha^{(k-1)},\hat{\alpha}%
)^2\left|\frac{\hat{l}_2}{\hat{l}_1}\right|^2+\frac{8\gamma_2^2|H_2|}{|\hat{l%
}_1|^2}.
\end{equation*}
\end{lemma}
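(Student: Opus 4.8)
The plan is to prove the two displayed bounds by induction on $k$, where the induction hypothesis also carries the auxiliary invariant $\tfrac12 L(\alpha^{(k)},\hat\alpha)^2\le\tfrac12$ and $\tfrac12 L(\beta^{(k)},\hat\beta)^2\le\tfrac12$; the case $k=0$ is precisely the hypothesis of the lemma. Since $\alpha^{(0)}_{L_1}=0$, $\beta^{(0)}_{L_2}=0$ and $\hat A=\hat A_{H_1H_2}$, every iterate stays supported on $H_1$ (resp.\ $H_2$), so the thresholded vectors have at most $|H_1|$ (resp.\ $|H_2|$) nonzero entries. I will only spell out the step for $\alpha$, writing $\bar\alpha^{(k)}=\hat A\beta^{(k-1)}$ so that $\alpha^{(k)}=T(\bar\alpha^{(k)},\gamma_1)/\|T(\bar\alpha^{(k)},\gamma_1)\|$; the step for $\beta$ (updating from $\alpha^{(k-1)}$, as in Lemma~\ref{lem:powerSVD}) is identical with the indices swapped.

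The core of the step is to separate a ``power iteration'' contribution from a ``thresholding'' contribution. Using the triangle inequality for the metric $L(a,b)=\|aa^{T}/\|a\|^{2}-bb^{T}/\|b\|^{2}\|_F$ and $(x+y)^2\le 2x^2+2y^2$,
\[
\tfrac14 L(\alpha^{(k)},\hat\alpha)^2\le\tfrac12 L(\alpha^{(k)},\bar\alpha^{(k)})^2+\tfrac12 L(\bar\alpha^{(k)},\hat\alpha)^2 .
\]
For the second term, Lemma~\ref{lem:powerSVD} applied to $\hat A$ gives $\tfrac12 L(\bar\alpha^{(k)},\hat\alpha)^2\le \frac{\frac12 L(\beta^{(k-1)},\hat\beta)^2}{1-\frac12 L(\beta^{(k-1)},\hat\beta)^2}|\hat l_2/\hat l_1|^2$, and the induction hypothesis $\tfrac12 L(\beta^{(k-1)},\hat\beta)^2\le\tfrac12$ makes the denominator $\ge\tfrac12$, so $\tfrac12 L(\bar\alpha^{(k)},\hat\alpha)^2\le L(\beta^{(k-1)},\hat\beta)^2|\hat l_2/\hat l_1|^2$. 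For the first term, property (i) of the thresholding function gives $\|T(\bar\alpha^{(k)},\gamma_1)-\bar\alpha^{(k)}\|\le\gamma_1\sqrt{|H_1|}$, whence $\sin\angle(\alpha^{(k)},\bar\alpha^{(k)})\le\gamma_1\sqrt{|H_1|}/\|\bar\alpha^{(k)}\|$ and $L(\alpha^{(k)},\bar\alpha^{(k)})^2\le 2\gamma_1^2|H_1|/\|\bar\alpha^{(k)}\|^2$. To close this I would lower-bound $\|\bar\alpha^{(k)}\|=\|\hat A\beta^{(k-1)}\|$: writing $\beta^{(k-1)}=c\hat\beta+sw$ with $w\perp\hat\beta$, $\|w\|=1$, and using $\hat A\hat\beta=\hat l_1\hat\alpha$ together with $\hat Aw\perp\hat\alpha$ (the SVD of $\hat A$), Pythagoras gives $\|\bar\alpha^{(k)}\|^2\ge c^2\hat l_1^2=(1-\tfrac12 L(\beta^{(k-1)},\hat\beta)^2)\hat l_1^2\ge\tfrac12\hat l_1^2$ by the induction hypothesis. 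Hence $\tfrac12 L(\alpha^{(k)},\bar\alpha^{(k)})^2\le 2\gamma_1^2|H_1|/\hat l_1^2\le 8\gamma_1^2|H_1|/|\hat l_1|^2$, and adding the two pieces yields the first displayed bound (with constants to spare). One should also observe that the eigengap assumption forces $\gamma_1\sqrt{|H_1|}\le|\hat l_1|/\sqrt{32}<\|\bar\alpha^{(k)}\|$, so $T(\bar\alpha^{(k)},\gamma_1)\ne 0$ and the normalization is well defined.

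To complete the induction I would feed $L(\beta^{(k-1)},\hat\beta)^2\le 1$ (from $\tfrac12 L(\beta^{(k-1)},\hat\beta)^2\le\tfrac12$) into the bound just obtained, getting $\tfrac14 L(\alpha^{(k)},\hat\alpha)^2\le|\hat l_2/\hat l_1|^2+8\gamma_1^2|H_1|/|\hat l_1|^2\le|\hat l_2/\hat l_1|^2+8(\gamma_1^2|H_1|\vee\gamma_2^2|H_2|)/|\hat l_1|^2\le\tfrac14$, i.e.\ $\tfrac12 L(\alpha^{(k)},\hat\alpha)^2\le\tfrac12$; the symmetric computation gives $\tfrac12 L(\beta^{(k)},\hat\beta)^2\le\tfrac12$, so the invariant is preserved and the induction goes through for all $k=1,\dots,K$. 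The main obstacle here is not any single estimate but the bookkeeping: although stated as a one-step bound, the lemma is really an induction in disguise, and the point that must be made carefully is that the eigengap condition is exactly what keeps the denominator in Lemma~\ref{lem:powerSVD} bounded away from zero and simultaneously prevents the thresholding perturbation from accumulating, so that the ``angle stays below $45^\circ$'' invariant survives every iteration.
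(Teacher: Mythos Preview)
Your proposal is correct and follows essentially the same approach as the paper: triangle inequality on $L$, Lemma~\ref{lem:powerSVD} for the power-iteration piece, the bound $\|T(\bar\alpha^{(k)},\gamma_1)-\bar\alpha^{(k)}\|\le\gamma_1\sqrt{|H_1|}$ for the thresholding piece, the lower bound $\|\hat A\beta^{(k-1)}\|^2\ge(1-\tfrac12 L(\beta^{(k-1)},\hat\beta)^2)\hat l_1^2$, and an induction to maintain $\tfrac12 L^2\le\tfrac12$. Your use of $\sin\angle(a,b)\le\|a-b\|/\|a\|$ yields a slightly tighter constant than the paper's $L\le 2\sqrt2\|a-b\|/\|b\|$, and your explicit check that $T(\bar\alpha^{(k)},\gamma_1)\ne 0$ is a nice touch the paper leaves implicit, but the skeleton of the argument is identical.
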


\begin{proof}
By triangle inequality, we have 
\begin{equation*}
L(\alpha ^{(k)},\hat{\alpha})\leq L(\alpha ^{(k)},\bar{\alpha}^{(k)})+L(\bar{%
\alpha}^{(k)},\hat{\alpha}).
\end{equation*}%
The second term on the right hand side above is bounded in Lemma \ref%
{lem:powerSVD}. The first term is bounded as 
\begin{eqnarray*}
L(\alpha ^{(k)},\bar{\alpha}^{(k)}) &\leq &\frac{2\sqrt{2}||T(\bar{\alpha}%
^{(k)},\gamma _{1})-\bar{\alpha}^{(k)}||}{||\bar{\alpha}^{(k)}||} \\
&\leq &\frac{2\sqrt{2}\gamma _{1}\sqrt{|H_{1}|}}{||\hat{A}\beta ^{(k-1)}||},
\end{eqnarray*}%
where $||\hat{A}\beta ^{(k-1)}||^{2}\geq |\hat{l}_{1}|^{2}|\hat{\beta}%
^{T}\beta ^{(k-1)}|^{2}=|\hat{l}_{1}|^{2}\Big(1-\frac{1}{2}L(\beta ^{(k-1)},%
\hat{\beta})^{2}\Big)$. Therefore, we have 
\begin{equation*}
\frac{1}{4}L(\alpha ^{(k)},\hat{\alpha})^{2}\leq \frac{\frac{1}{2}L(\beta
^{(k-1)},\hat{\beta})^{2}}{1-\frac{1}{2}L(\beta ^{(k-1)},\hat{\beta})^{2}}%
\left\vert \frac{\hat{l}_{2}}{\hat{l}_{1}}\right\vert ^{2}+\frac{4\gamma
_{1}^{2}|H_{1}|/|\hat{l}_{1}|^{2}}{1-\frac{1}{2}L(\beta ^{(k-1)},\hat{\beta}%
)^{2}}.
\end{equation*}%
In the same way, we have 
\begin{equation*}
\frac{1}{4}L(\beta ^{(k)},\hat{\beta})^{2}\leq \frac{\frac{1}{2}L(\alpha
^{(k-1)},\hat{\alpha})^{2}}{1-\frac{1}{2}L(\alpha ^{(k-1)},\hat{\alpha})^{2}}%
\left\vert \frac{\hat{l}_{2}}{\hat{l}_{1}}\right\vert ^{2}+\frac{4\gamma
_{2}^{2}|H_{2}|/|\hat{l}_{1}|^{2}}{1-\frac{1}{2}L(\alpha ^{(k-1)},\hat{\alpha%
})^{2}}.
\end{equation*}%
Suppose $L(\alpha ^{(k-1)},\hat{\alpha})^{2}\vee L(\beta ^{(k-1)},\hat{\beta}%
)^{2}\leq 1$, then it is easy to see that $L(\alpha ^{(k)},\hat{\alpha}%
)^{2}\vee L(\beta ^{(k)},\hat{\beta})^{2}\leq 1$ under the assumption. Using
mathematical induction, $L(\alpha ^{(k-1)},\hat{\alpha})^{2}\vee L(\beta
^{(k-1)},\hat{\beta})^{2}\leq 1$ is true for each $k$. Therefore, we deduce
the desired result. 
\end{proof}

The above Lemma \ref{lem:2ndprep} implies the following convergence rate of
oracle sequence.

\begin{lemma}
\label{lem:2nd} Suppose the assumptions of Lemma \ref{lem:2ndprep} hold, and
we further assume $|\hat{l}_2|/|\hat{l}_1|< 32^{-1/4}$, and then we have for
all $k\geq 2$, 
\begin{equation*}
L({\alpha}^{(k)},\hat{\alpha})^2\leq \max\Bigg(4\left|\frac{\hat{l}_2}{\hat{l%
}_1}\right|^2\frac{64\gamma_2^2|H_2|}{|\hat{l}_1|^2}+\frac{64\gamma_1^2|H_1|%
}{|\hat{l}_1|^2},\Big(32\left|\frac{\hat{l}_2}{\hat{l}_1}\right|^4\Big)%
^{[k/2]}\Bigg),
\end{equation*}
\begin{equation*}
L({\beta}^{(k)},\hat{\beta})^2\leq \max\Bigg(4\left|\frac{\hat{l}_2}{\hat{l}%
_1}\right|^2\frac{64\gamma_1^2|H_1|}{|\hat{l}_1|^2}+\frac{64\gamma_2^2|H_2|}{%
|\hat{l}_1|^2},\Big(32\left|\frac{\hat{l}_2}{\hat{l}_1}\right|^4\Big)^{[k/2]}%
\Bigg).
\end{equation*}
\end{lemma}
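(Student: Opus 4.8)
I would first reduce everything to a scalar recursion. Write $\rho=|\hat{l}_2/\hat{l}_1|^2$, $a=8\gamma_1^2|H_1|/|\hat{l}_1|^2$, $b=8\gamma_2^2|H_2|/|\hat{l}_1|^2$, and abbreviate $x_k=L(\alpha^{(k)},\hat{\alpha})^2$, $y_k=L(\beta^{(k)},\hat{\beta})^2$. In this notation Lemma~\ref{lem:2ndprep} reads $x_k\le 4\rho\,y_{k-1}+4a$ and $y_k\le 4\rho\,x_{k-1}+4b$; its standing hypotheses give $x_0\vee y_0\le 1$ and $\rho+(a\vee b)\le\tfrac14$; and the extra assumption $|\hat{l}_2|/|\hat{l}_1|<32^{-1/4}$ is exactly $\rho<32^{-1/2}$, i.e. $16\rho^2<\tfrac12$. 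The conclusions to be proved become $x_k\le\max\big(8a+32\rho b,\,(32\rho^2)^{[k/2]}\big)$ and $y_k\le\max\big(8b+32\rho a,\,(32\rho^2)^{[k/2]}\big)$, because $8a=64\gamma_1^2|H_1|/|\hat{l}_1|^2$, $32\rho b=4|\hat{l}_2/\hat{l}_1|^2\cdot\big(64\gamma_2^2|H_2|/|\hat{l}_1|^2\big)$, and $(32\rho^2)^{[k/2]}=(32|\hat{l}_2/\hat{l}_1|^4)^{[k/2]}$.

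The first step is to \emph{decouple} the $\alpha$- and $\beta$-iterations by composing two consecutive one-step bounds: for $k\ge 2$,
\[
x_k\le 4\rho\big(4\rho\,x_{k-2}+4b\big)+4a=16\rho^2\,x_{k-2}+(16\rho b+4a),
\]
and symmetrically $y_k\le 16\rho^2\,y_{k-2}+(16\rho a+4b)$. Since $16\rho^2<\tfrac12$, each two-step map is a strict contraction, and — crucially — its additive part $16\rho b+4a$ is exactly half of the claimed noise level $8a+32\rho b$. This factor-of-two match is what will let the conclusion come out as a maximum instead of a sum.

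The second step is an induction in increments of $2$. For the base cases, $k=0$ is immediate from $x_0\vee y_0\le 1=(32\rho^2)^0$, and $k=1$ follows from $x_1\le 4\rho y_0+4a$ together with the consistency $x_0\vee y_0=o(1)$ supplied by Lemma~\ref{lemma:: Initial} and the fact that $a$ and $b$ are $o(1)$ (already available from the bounds on $|H_1|,|H_2|$ and the choice of $\gamma_i$), which again gives $x_1\vee y_1\le 1=(32\rho^2)^0$. For the inductive step, substitute the hypothesis for $x_{k-2}$ into $x_k\le 16\rho^2\,x_{k-2}+(16\rho b+4a)$ and split on which term realizes the maximum for $x_{k-2}$. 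If it is the noise term $8a+32\rho b$, then $16\rho^2(8a+32\rho b)<\tfrac12(8a+32\rho b)$, so $x_k<\tfrac12(8a+32\rho b)+(16\rho b+4a)=8a+32\rho b$. If it is the bias term $(32\rho^2)^{[(k-2)/2]}=(32\rho^2)^{[k/2]-1}$, then $16\rho^2(32\rho^2)^{[k/2]-1}=\tfrac12(32\rho^2)^{[k/2]}$ while $16\rho b+4a=\tfrac12(8a+32\rho b)$, so $x_k\le\tfrac12(32\rho^2)^{[k/2]}+\tfrac12(8a+32\rho b)\le\max\big(8a+32\rho b,(32\rho^2)^{[k/2]}\big)$. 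The argument for $y_k$ is the same with $a$ and $b$ interchanged. Unwinding the abbreviations recovers the two displayed inequalities of the lemma.

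The step I expect to be the main obstacle is the constant bookkeeping: one must choose the numerical factors so that the output is a \emph{maximum} of a bias term and a noise term rather than their sum, and this is exactly why the target carries the constant $32$ (twice the $16$ that composing two iterations produces) — it is affordable only because the hypothesis $|\hat{l}_2|/|\hat{l}_1|<32^{-1/4}$ gives the sharp contraction $16\rho^2<\tfrac12$. A secondary nuisance is the $k=1$ base case, which genuinely needs the consistency of the coordinate-thresholding initializer from Lemma~\ref{lemma:: Initial}, not merely the crude bound $x_0\vee y_0\le1$.
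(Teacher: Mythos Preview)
Your proposal is correct and follows essentially the same route as the paper: compose two applications of Lemma~\ref{lem:2ndprep} to obtain the decoupled two-step recursion $x_k\le 16\rho^2 x_{k-2}+(16\rho b+4a)$, observe that $16\rho^2<\tfrac12$ under the extra hypothesis, and induct in steps of two to land on a maximum of the noise term $8a+32\rho b$ and the geometric term $(32\rho^2)^{[k/2]}$. The paper organizes the induction slightly differently---it first rewrites the two-step bound as $x_k\le\max\big(2\omega_1,\,x_{k-2}\cdot 2\rho_{\text{paper}}\big)$ with $\rho_{\text{paper}}=16\rho^2$ and $\omega_1=16\rho b+4a$, and then inducts on this max form---but the substance is identical.

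One small simplification: you do not need to invoke Lemma~\ref{lemma:: Initial} for the base case $k=1$. The bound $x_1\vee y_1\le 1$ already follows from the standing hypotheses of Lemma~\ref{lem:2ndprep}, since $x_1\le 4\rho y_0+4a\le 4(\rho+a)\le 1$ by $y_0\le 1$ and $\rho+(a\vee b)\le\tfrac14$; the paper in fact establishes $x_k\vee y_k\le 1$ for all $k$ inside the proof of Lemma~\ref{lem:2ndprep} itself.
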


\begin{proof}
We only prove the bound for $L({\alpha}^{(k)},\hat{\alpha})$. Using the
previous lemma, we derive a formula of a two-step analysis 
\begin{equation*}
L({\alpha}^{(k)},\hat{\alpha})^2\leq L(\alpha^{(k-2)},\hat{\alpha}%
)^2\rho+\omega_1,
\end{equation*}
where 
\begin{equation*}
\rho=\Bigg(4\left|\frac{\hat{l}_2}{\hat{l}_1}\right|^2\Bigg)^2,\quad
\omega_1=4\left|\frac{\hat{l}_2}{\hat{l}_1}\right|^2\frac{32\gamma_2^2|H_2|}{%
|\hat{l}_1|^2}+\frac{32\gamma_1^2|H_1|}{|\hat{l}_1|^2}.
\end{equation*}
Therefore, for each $k$, we have 
\begin{equation*}
L({\alpha}^{(k)},\hat{\alpha})^2\leq \max\Big(2\omega_1, L(\alpha^{(k-2)},%
\hat{\alpha})^2(2\rho)\Big).
\end{equation*}
We are going to prove for each $k$, 
\begin{equation*}
L({\alpha}^{(2k)},\hat{\alpha})^2\leq\max\Big(2\omega_1, L(\alpha^{(0)},\hat{%
\alpha})^2(2\rho)^k\Big).
\end{equation*}
It is obvious that this is true for $k=0$. Suppose this is true for $k-1$,
then we have 
\begin{eqnarray*}
L({\alpha}^{(2k)},\hat{\alpha})^2 &\leq& \max\Big(2\omega_1,L(%
\alpha^{(2(k-1))},\hat{\alpha})^2(2\rho)\Big) \\
&\leq& \max\Bigg(2\omega_1, \max\Big(2\omega_1,L(\alpha^{(0)},\hat{\alpha}%
)^2(2\rho)^{k-1}\Big)(2\rho)\Bigg) \\
&\leq& \max\Big(2\omega_1,2\omega_1(2\rho),L(\alpha^{(0)},\hat{\alpha}%
)^2(2\rho)^k\Big) \\
&=& \max\Big(2\omega_1,L(\alpha^{(0)},\hat{\alpha})^2(2\rho)^k\Big),
\end{eqnarray*}
where the last inequality follows from the assumption $|\hat{l}_2/\hat{l}%
_1|\leq 32^{-1/4}$. By mathematical induction, the inequality is true for
each $k$. Similarly, we can show that for each $k$, 
\begin{equation*}
L(\alpha^{(2k+1)},\hat{\alpha})^2\leq \max\Big(2\omega_1, L(\alpha^{(1)},%
\hat{\alpha})^2(2\rho)^{k}\Big).
\end{equation*}
Therefore, 
\begin{equation*}
L(\alpha^{(k)},\hat{\alpha})^2\leq \max\Big(2\omega_1,(2\rho)^{[k/2]}\Big),
\end{equation*}
and the proof is complete by a similar argument for $\beta^{(k)}$. 
\end{proof}

\begin{proof}[Proof of Lemma \ref{lem:iteration}]
It is sufficient to check the conditions of Lemma \ref{lem:2ndprep} and
Lemma \ref{lem:2nd}. The first condition of Lemma \ref{lem:2ndprep} 
\begin{equation*}
\frac{1}{2}L^{2}(\alpha ^{(0)},\hat{\alpha})^{2}\leq \frac{1}{2},\quad \frac{%
1}{2}L(\beta ^{(0)},\hat{\beta})^{2}\leq \frac{1}{2},
\end{equation*}%
is directly by the fact that the initializer is consistent, which is
guaranteed by Lemma \ref{lemma:: Initial}. The second condition of Lemma \ref%
{lem:2ndprep} is deduced from Lemma \ref{lem:concen4} (bounds of $|\hat{l}%
_{1}|$ and $|\hat{l}_{2}|$) and Lemma \ref{lem:cardinality} (bounds of $%
|H_{1}|$ and $|H_{2}|$). Finally, the condition of Lemma \ref{lem:2nd}
follows from Lemma \ref{lem:concen4}. The conclusions of Lemma \ref%
{lem:iteration} are the conclusions of Lemma \ref{lem:2nd} and Lemma \ref%
{lem:2ndprep} respectively. 
\end{proof}

\begin{proof}[Proof of Lemma \ref{lem:eigengap}]
The bounds on $|H_1|$ and $|H_2|$ have been established in Lemma \ref%
{lem:cardinality}. The bounds for $|\hat{l}_2|^2$ and $|\hat{l}_1|^{-2}$ are
from Lemma \ref{lem:concen4}. 
\end{proof}

\section{Proofs of Lemma \protect\ref{lem:same0} and Lemma \protect\ref%
{lem:same}}

We first present a deterministic bound and then prove the results by
probabilistic arguments.

\begin{lemma}
\label{lem:3rddeterm} For any unit vectors $a\in \mathbb{R}^{\left\vert
H_{1}\right\vert }$ and $b\in \mathbb{R}^{\left\vert H_{2}\right\vert }$, we
have 
\begin{eqnarray*}
||\hat{A}_{L_{1}H_{2}}b||_{\infty } &\leq &\frac{1}{n}%
\sum_{i=1}^{n}Z_{i}^{2}||\beta ||\delta _{1}\sqrt{\frac{\log p_{1}}{n}}%
+\delta _{1}\sqrt{\frac{\log p_{1}}{n}}\left\Vert \frac{1}{n}%
\sum_{i=1}^{n}Z_{i}Y_{i,H_{2}}^{\prime T}\right\Vert +||\beta ||\left\Vert 
\frac{1}{n}\sum_{i=1}^{n}Z_{i}X_{i,L_{1}}^{\prime }\right\Vert _{\infty } \\
&&+\frac{||Y_{H_{2}}^{\prime T}Y_{H_{2}}^{\prime }||^{1/2}}{n}\max_{k\in
L_{1}}\sqrt{\mbox{\rm Var}(X_{1,k}^{\prime })}\left\vert \frac{%
\sum_{i=1}^{n}X_{i,k}^{\prime }Y_{i,H_{2}}^{\prime T}b}{\sqrt{\mbox{\rm Var}%
(X_{1,k}^{\prime })b^{T}Y_{H_{2}}^{\prime T}Y_{H_{2}}^{\prime }b}}%
\right\vert , \\
||\hat{A}_{H_{1}L_{2}}^{T}a||_{\infty } &\leq &\frac{1}{n}%
\sum_{i=1}^{n}Z_{i}^{2}||\alpha ||\delta _{2}\sqrt{\frac{\log p_{2}}{n}}%
+\delta _{2}\sqrt{\frac{\log p_{2}}{n}}\left\Vert \frac{1}{n}%
\sum_{i=1}^{n}Z_{i}X_{i,H_{1}}^{\prime T}\right\Vert +||\alpha ||\left\Vert 
\frac{1}{n}\sum_{i=1}^{n}Z_{i}Y_{i,L_{2}}^{\prime }\right\Vert _{\infty } \\
&&+\frac{||X_{H_{1}}^{\prime T}X_{H_{1}}^{\prime }||^{1/2}}{n}\max_{k\in
L_{2}}\sqrt{\mbox{\rm Var}(Y_{1,k}^{\prime })}\left\vert \frac{%
\sum_{i=1}^{n}Y_{i,k}^{\prime }X_{i,H_{1}}^{T}a}{\sqrt{\mbox{\rm Var}%
(Y_{1,k}^{\prime })a^{T}Y_{H_{1}}^{\prime T}Y_{H_{1}}^{\prime }a}}%
\right\vert .
\end{eqnarray*}
\end{lemma}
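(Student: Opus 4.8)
The plan is purely algebraic: substitute the latent-variable representation $\tilde X_i=\sqrt\lambda\,\alpha Z_i+X_i^{\prime}$, $\tilde Y_i=\sqrt\lambda\,\beta Z_i+Y_i^{\prime}$ (which exists by Lemma \ref{lem:latent}) into $\hat A_{L_1H_2}b=\frac1n\sum_{i=1}^n\tilde X_{i,L_1}(\tilde Y_{i,H_2}^{T}b)$, expand the product into four pieces, and bound each one with elementary inequalities; the second displayed inequality then follows by the obvious symmetry $X\leftrightarrow Y$, $L_1\leftrightarrow L_2$, $H_1\leftrightarrow H_2$, $\alpha\leftrightarrow\beta$, $\delta_1\leftrightarrow\delta_2$, $p_1\leftrightarrow p_2$.

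First I would record the expansion
\begin{align*}
\hat A_{L_1H_2}b &= \lambda\,(\beta_{H_2}^{T}b)\Big(\tfrac1n\textstyle\sum_i Z_i^2\Big)\alpha_{L_1}
+\sqrt\lambda\,\Big(\tfrac1n\textstyle\sum_i Z_i\,Y_{i,H_2}^{\prime T}b\Big)\alpha_{L_1}\\
&\quad +\sqrt\lambda\,(\beta_{H_2}^{T}b)\,\tfrac1n\textstyle\sum_i Z_i X_{i,L_1}^{\prime}
+\tfrac1n\textstyle\sum_i (Y_{i,H_2}^{\prime T}b)\,X_{i,L_1}^{\prime},
\end{align*}
and then apply the triangle inequality to $\|\cdot\|_\infty$. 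The first three terms collapse to exactly the stated bounds using $\lambda\le1$, $\|b\|=1$, the Cauchy--Schwarz bounds $|\beta_{H_2}^{T}b|\le\|\beta_{H_2}\|\le\|\beta\|$ and $|(\tfrac1n\sum_iZ_iY_{i,H_2}^{\prime})^{T}b|\le\|\tfrac1n\sum_iZ_iY_{i,H_2}^{\prime T}\|$, together with the elementary fact that $\|\alpha_{L_1}\|_\infty\le\delta_1\sqrt{\log p_1/n}$, which is immediate from the definition $H_1=\{k:|\alpha_k|\ge\delta_1\sqrt{\log p_1/n}\}$ in (\ref{eq:signalset}).

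The only piece needing care is the cross term $\frac1n\sum_i(Y_{i,H_2}^{\prime T}b)X_{i,L_1}^{\prime}$. For each fixed $k\in L_1$ I would renormalize its $k$-th coordinate: writing $Y_{H_2}^{\prime}$ for the $n\times|H_2|$ matrix with rows $Y_{i,H_2}^{\prime T}$, one has $\sum_i(Y_{i,H_2}^{\prime T}b)^2=b^{T}Y_{H_2}^{\prime T}Y_{H_2}^{\prime}b\le\|Y_{H_2}^{\prime T}Y_{H_2}^{\prime}\|$, so that
\begin{align*}
\Big|\tfrac1n\textstyle\sum_i(Y_{i,H_2}^{\prime T}b)X_{i,k}^{\prime}\Big|
&=\frac{\sqrt{\mathrm{Var}(X_{1,k}^{\prime})\,b^{T}Y_{H_2}^{\prime T}Y_{H_2}^{\prime}b}}{n}\,
\Big|\frac{\sum_iX_{i,k}^{\prime}Y_{i,H_2}^{\prime T}b}{\sqrt{\mathrm{Var}(X_{1,k}^{\prime})\,b^{T}Y_{H_2}^{\prime T}Y_{H_2}^{\prime}b}}\Big|\\
&\le\frac{\|Y_{H_2}^{\prime T}Y_{H_2}^{\prime}\|^{1/2}}{n}\sqrt{\mathrm{Var}(X_{1,k}^{\prime})}\,\Big|\frac{\sum_iX_{i,k}^{\prime}Y_{i,H_2}^{\prime T}b}{\sqrt{\mathrm{Var}(X_{1,k}^{\prime})\,b^{T}Y_{H_2}^{\prime T}Y_{H_2}^{\prime}b}}\Big|,
\end{align*}
and taking the maximum over $k\in L_1$ yields the last summand; combining the four bounds gives the first inequality. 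The main (mild) obstacle is precisely packaging this cross term in the pre-standardized shape above: this is deliberate, since in the subsequent probabilistic step (Lemma \ref{lem:same}) the bracketed ratio is, conditionally on $Y_{H_2}^{\prime}$, a standard Gaussian — so it is readily controlled — whereas everything else here is routine bookkeeping with Cauchy--Schwarz, operator-norm bounds and the definitions of $H_1$ and $H_2$.
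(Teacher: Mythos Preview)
Your proposal is correct and follows essentially the same route as the paper: substitute the latent-variable representation, expand $\hat A_{L_1H_2}b$ into the four pieces, bound the first three via $\lambda\le 1$, $|\beta_{H_2}^Tb|\le\|\beta\|$, $\|\alpha_{L_1}\|_\infty\le\delta_1\sqrt{\log p_1/n}$, and handle the cross term by the same pre-standardization trick. The only cosmetic difference is that the paper expands the matrix $\tilde X_{i,L_1}\tilde Y_{i,H_2}^T$ first and then applies the vector $b$, whereas you apply $b$ before expanding---the resulting four terms and their bounds are identical.
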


\begin{proof}
Using the latent representation in Lemma \ref{lem:latent}, we have
$$\tilde{X}_{i,L_1}\tilde{Y}_{i,H_2}^T=\lambda Z_i^2\alpha_{L_1}\beta_{H_2}^T+\sqrt{\lambda}Z_i\alpha_{L_1}Y_{i,H_2}'^T+\sqrt{\lambda}Z_iX_{i,L_1}'\beta_{H_2}^T+X_{i,L_1}'Y_{i,H_2}'^T.$$
Therefore,
\begin{eqnarray*}
||\hat{A}_{L_1H_2}b||_{\infty} &\leq& \lambda \frac{1}{n}\sum_{i=1}^nZ_i^2||\alpha_{L_1}||_{\infty}|\beta_{H_2}^Tb|  +  \sqrt{\lambda}||\alpha_{L_1}||_{\infty}\left|\frac{1}{n}\sum_{i=1}^nZ_i Y_{i,H_2}'^Tb\right| \\
&& + \sqrt{\lambda}\left\|\frac{1}{n}\sum_{i=1}^n Z_iX_{i,L_1}'\right\|_{\infty}|\beta_{H_2}^Tb|  + \left\|\frac{1}{n}\sum_{i=1}^nX_{i,L_1}'Y_{i,H_2}'^Tb\right\|_{\infty},
\end{eqnarray*}
where the first term is bounded by
$$\frac{1}{n}\sum_{i=1}^nZ_i^2||\beta||\delta_1\sqrt{\frac{\log p_1}{n}},$$
the second term is bounded by
$$\delta_1\sqrt{\frac{\log p_1}{n}}\left\|\frac{1}{n}\sum_{i=1}^nZ_i Y_{i,H_2}'^T\right\|,$$
the third term is bounded by
$$||\beta||\left\|\frac{1}{n}\sum_{i=1}^n Z_iX_{i,L_1}'\right\|_{\infty},$$
and the last term is bounded by
\begin{eqnarray*}
 \max_{k\in L_1} \left|\frac{1}{n}\sum_{i=1}^n X_{i,k}'Y_{i,H_2}^Tb\right| &\leq& \max_{k\in L_1}\sqrt{\text{Var}(X_{1,k}')}\left|\frac{1}{n}\sum_{i=1}^n\frac{X_{i,k}'}{\sqrt{\text{Var}(X_{1,k}')}}Y_{i,H_2}'^Tb\right| \\
 &\leq& \frac{||Y_{H_2}'^TY_{H_2}'||^{1/2}}{n}\max_{k\in L_1}\sqrt{\text{Var}(X_{1,k}')}\left|\frac{\sum_{i=1}^n X_{i,k}'Y_{i,H_2}^Tb}{\sqrt{\text{Var}(X_{1,k}')b^TY_{H_2}'^TY_{H_2}'b}}\right|,
\end{eqnarray*}
where $Y_{H_2}'$ is an $n\times |H_2|$ matrix with the $i$-th row $Y_{i,H_2}'^T$. Summing up the bounds, the proof is complete.
\end{proof}

\begin{proof}[Proof of Lemma \ref{lem:same0}]
This is a corollary of Result 1 of Lemma \ref{lemma:: Initial}. By $B_i\subset H_i$ for $i=1,2$, we have $B_i^{ora}=B_i$ for $i=1,2$. Thus, $(\alpha^{(0),ora},\beta^{(0),ora})=(\alpha^{(0)},\beta^{(0)})$.
\end{proof}

\begin{proof}[Proof of Lemma \ref{lem:same}]
We upper bound each term in the conclusion of Lemma \ref{lem:3rddeterm}.
According to the concentration inequalities we have established,
$$\frac{1}{n}\sum_{i=1}^n Z_i^2\leq 2,$$
with probability at least $1-2e^{-3n/16}$ by Lemma \ref{lem:concen1}.
$$\left\|\frac{1}{n}\sum_{i=1}^nZ_i Y_{i,H_2}'^T\right\|\leq 2.06||\hat{\Omega}_2||^{1/2},$$
with probability at least $1-\Big(e^{-3n/64}+e^{-(\sqrt{n}-1)^2/2}\Big)$ by Lemma \ref{lem:concen3}.
$$\left\|\frac{1}{n}\sum_{i=1}^n Z_iX_{i,L_1}'\right\|_{\infty}\leq 2.07||\hat{\Omega}_1||^{1/2}\sqrt{\frac{\log p}{n}},$$
with probability at least $1-\Big(e^{-3n/64+\log p_1}+p^{-2}\Big)$ by Lemma \ref{lem:concen3}.
$$ \frac{||Y_{H_2}'^TY_{H_2}'||^{1/2}}{n}\leq 3.02||\hat{\Omega}_2||^{1/2}n^{-1/2},$$
with probability at least $1-2e^{-n/2}$ by Lemma \ref{lem:concen2}. Using union bound and by Lemma \ref{lem:3rddeterm}, we have
\begin{eqnarray*}
||\hat{A}_{L_1H_2}b^{(k)}||_{\infty} &\leq& \Big(2\delta_1||\beta||+2.06\delta_1||\hat{\Omega}_2||^{1/2}+2.07||\beta||||\hat{\Omega}_1||^{1/2}\Big)\sqrt{\frac{\log p}{n}} \\
&& + 3.02||\hat{\Omega}_2||^{1/2}n^{-1/2}\max_{l\in L_1}\sqrt{\text{Var}(X_{1,l}')}\left|\frac{\sum_{i=1}^n X_{i,l}'Y_{i,H_2}'^Tb^{(k)}}{\sqrt{\text{Var}(X_{1,l}')b^{(k),T}Y_{H_2}'^TY_{H_2}'b^{(k)}}}\right|,
\end{eqnarray*}
with probability at least $1-O(p^{-2})$
for all $k$. Notice by Lemma \ref{lem:latent}
$$\max_{l\in L_1}\sqrt{\text{Var}(X_{1,l}')}\left|\frac{\sum_{i=1}^n X_{i,l}'Y_{i,H_2}'^Tb^{(k)}}{\sqrt{\text{Var}(X_{1,l}')b^{(k),T}Y_{H_2}'^TY_{H_2}'b^{(k)}}}\right|\leq 1.01||\hat{\Omega}_2||^{1/2}\max_{l\in L_1}\left|\frac{\sum_{i=1}^n X_{i,l}'Y_{i,H_2}'^Tb^{(k)}}{\sqrt{\text{Var}(X_{1,l}')b^{(k),T}Y_{H_2}'^TY_{H_2}'b^{(k)}}}\right|.$$
Since $b^{(k)}$ only depends on $\hat{A}_{H_1H_2}$, $Y_{H_2}'$ and $b^{(k)}$ are jointly independent of $X_{L_1}'$. Therefore, conditioning on $Y_{H_2}$ and $b^{(k)}$,
$$\frac{\sum_{i=1}^n X_{i,l}'Y_{i,H_2}'^Tb^{(k)}}{\sqrt{\text{Var}(X_{1,l}')b^{(k),T}Y_{H_2}'^TY_{H_2}'b^{(k)}}}$$
is a standard Gaussian. Therefore, by union bound,
$$\max_{l\in L_1}\left|\frac{\sum_{i=1}^n X_{i,l}'Y_{i,H_2}'^Tb^{(k)}}{\sqrt{\text{Var}(X_{1,l}')b^{(k),T}Y_{H_2}'^TY_{H_2}'b^{(k)}}}\right|\leq \sqrt{6\log p},$$
with probability at least $1-Kp^{-2}$, for all $k=1,2...,K$. Finally we have
$$||\hat{A}_{L_1H_2}b^{(k)}||_{\infty} \leq  \Big(2\delta_1||\beta||+2.06\delta_1||\hat{\Omega}_2||^{1/2}+2.07||\beta||||\hat{\Omega}_1||^{1/2}+7.5||\hat{\Omega}_2||\Big)\sqrt{\frac{\log p}{n}},$$
for all $k=1,2,...,K$, with probability at least $1-O(p^{-2})$.
The same analysis also applies to $||\hat{A}_{H_1L_2}^Ta^{(k)}||_{\infty}$. The result is proved by $||\alpha||\leq 1.01||\hat{\Omega}_1||^{1/2}$ and $||\beta||\leq 1.01||\hat{\Omega}_2||^{1/2}$ and the choice of $\gamma_1$ and $\gamma_2$ in Section \ref{sec:threshconst}.
\end{proof}

\section{Proofs of Technical Lemmas}

\begin{proof}[Proof of Lemma \ref{lem:cardinality}]
By definition,
\begin{eqnarray*}
|H_{1}^{\prime }| &\leq &\left\vert \left\{ k:|\theta _{k}|\geq \frac{1}{2}\delta _{1}%
\sqrt{\frac{\log p_{1}}{n}}\right\} \right\vert  \\
&=&\left\vert \left\{ k:|\theta _{(k)}|\geq \frac{1}{2}\delta _{1}\sqrt{\frac{\log p_{1}%
}{n}}\right\} \right\vert  \\
&\leq &\left\vert \left\{ k:s_{1}k^{-1}\geq \Bigg(\frac{1}{2}\delta _{1}\sqrt{\frac{%
\log p_{1}}{n}}\Bigg)^{q}\right\} \right\vert  \\
&\leq &(\delta _{1}/2)^{-q}s_{1}\Bigg(\frac{\log p_{1}}{n}\Bigg)^{-q/2}.
\end{eqnarray*}%
Notice
\begin{eqnarray*}
H_{1} &\subset &\left\{ k:|\theta _{k}|+|\alpha _{k}-\theta _{k}|\geq \delta
_{1}\sqrt{\frac{\log p_{1}}{n}}\right\}  \\
&\subset &H_{1}^{\prime }\bigcup \left\{ k:|\alpha _{k}-\theta _{k}|\geq
\frac{1}{2}\delta _{1}\sqrt{\frac{\log p_{1}}{n}}\right\} .
\end{eqnarray*}%
Since
\begin{equation*}
\left\vert \left\{ k:|\alpha _{k}-\theta _{k}|\geq \frac{1}{2}\delta _{1}\sqrt{\frac{%
\log p_{1}}{n}}\right\} \right\vert \Bigg(\frac{1}{2}\delta _{1}\sqrt{\frac{\log p_{1}}{%
n}}\Bigg)^{2}\leq ||\alpha -\theta ||^{2},
\end{equation*}%
we have
\begin{eqnarray*}
|H_{1}| &\leq &|H_{1}^{\prime }|+\left\vert \left\{ k:|\alpha _{k}-\theta
_{k}|\geq \frac{1}{2}\delta _{1}\sqrt{\frac{\log p_{1}}{n}}\right\} \right\vert  \\
&\leq &(\delta _{1}/2)^{-q}s_{1}\Bigg(\frac{\log p_{1}}{n}\Bigg)^{-q/2}+(\delta
_{1}/2)^{-2}\Bigg(\frac{\log p_{1}}{n}\Bigg)^{-1}||\theta -\alpha ||^{2}.
\end{eqnarray*}%
We also have
\begin{equation*}
|L_{1}-L_{1}^{\prime }|=|H_{1}^{\prime }-H_{1}|\leq |H_{1}|+|H_{1}^{\prime
}|\leq 2(\delta _{1}/2)^{-q}s_{1}\Bigg(\frac{\log p_{1}}{n}\Bigg)^{-q/2}+(\delta
_{1}/2)^{-2}\Bigg(\frac{\log p_{1}}{n}\Bigg)^{-1}||\theta -\alpha ||^{2}.
\end{equation*}%
Similar arguments apply to $|H_{2}|,|H_{2}^{\prime }|,|H_{2}-H_{2}^{\prime
}|,|L_{2}-L_{2}^{\prime }|$.
\end{proof}

\begin{proof}[Proof of Lemma \ref{lem:latent}]
It is not hard to find the formula
$$\text{Cov}(X')=\hat{\Omega}_1\Sigma_1\hat{\Omega}_1-\lambda\alpha\alpha^T,\quad \text{Cov}(Y')=\hat{\Omega}_2\Sigma_2\hat{\Omega}_2-\lambda\beta\beta^T.$$
Plugging $\alpha=\hat{\Omega}_1\Sigma_1\theta$, we have
$$\text{Cov}(X')=\hat{\Omega}_1\Sigma_1^{1/2}\Big(I-\lambda\Sigma_1^{1/2}\theta\theta^T\Sigma_1^{1/2}\Big)\Sigma_1^{1/2}\hat{\Omega}_1.$$
Since $||\Sigma^{1/2}\theta||=1$ and $\lambda\geq 1$, we have $\text{Cov}(X')\geq 0$. We proceed to prove the spectral bound as follows.
\begin{eqnarray*}
||\text{Cov}(X')|| &\leq& ||\hat{\Omega}_1\Sigma_1\hat{\Omega}_1|| \leq \Big(1+o(1)\Big)||\hat{\Omega}_1||,
\end{eqnarray*}
where the last inequality follows from Equation (\ref{Consistency condition}) in Assumption B. The same results also hold for $\text{Cov}(Y')$.
\end{proof}

\begin{proof}[Proof of Lemma \ref{lem:concen2}]
Let us denote the covariance matrix of each row of the matrix $X_{H_1}'$ by $R$. Then, we have $||X_{H_1}'^TX_{H_1}'||\leq ||R||||U_{H_1}^TU_{H_1}||$, where $U_{H_1}$ is an $n\times |H_1|$ Gaussian random matrix. We bound $||R||$ according to Lemma \ref{lem:latent} by
$$||R||\leq ||\text{Cov}(X')||\leq 1.01||\hat{\Omega}_1||.$$
For $||U_{H_1}^TU_{H_1}||$, we have the bound
\begin{eqnarray*}
&& \mathbb{P}\Bigg(\frac{1}{n}||U_{H_1}^TU_{H_1}||-1>2\Bigg(\sqrt{\frac{|H_1|}{n}}+t\Bigg)+\Bigg(\sqrt{\frac{|H_1|}{n}}+t\Bigg)^2\Bigg) \\
&\leq& \mathbb{P}\Bigg(||\frac{1}{n}U_{H_1}^TU_{H_1}-I||>2\Bigg(\sqrt{\frac{|H_1|}{n}}+t\Bigg)+\Bigg(\sqrt{\frac{|H_1|}{n}}+t\Bigg)^2\Bigg) \\
&\leq& 2e^{-nt^2/2},
\end{eqnarray*}
where the last inequality is from Proposition D.1 in \cite{ma2013sparse}. In the similar way, we obtain the bound for $||Y_{H_2}'^TY_{H_2}'||$. Now we bound $||X_{H_1}'^TY_{H_2}'||$. Denote the covariance of each row of the matrix $Y_{H_2}'$ by $S$. Then we have $||X_{H_1}'^TY_{H_2}'||\leq ||R||^{1/2}||S||^{1/2}||U_{H_1}^TV_{H_2}||$, where $V_{H_2}$ is an $n\times |H_2|$ Gaussian random matrix. We have $||R||^{1/2}||S||^{1/2}\leq 1.01||\hat{\Omega}_1||^{1/2}||\hat{\Omega}_2||^{1/2}$ by Lemma \ref{lem:latent}, and
\begin{eqnarray*}
\mathbb{P}\Bigg(||U_{H_1}^TV_{H_2}||> 1.01\Big(\sqrt{|H_1|n}+\sqrt{|H_2|n}+t\sqrt{n}\Big)\Bigg)\leq \big(|H_1|\wedge|H_2|\big)e^{-3n/64}+e^{-t^2/2},
\end{eqnarray*}
from Proposition D.2 in \cite{ma2013sparse}. Thus, the proof is complete.
\end{proof}

\begin{proof}[Proof of Lemma \ref{lem:concen3}]
Define $Z$ be the vector $(Z_1,...,Z_n)^T$. We keep the notations in the proof of the above lemma.  Then, we have
$$\left\|\sum_{i=1}^n Z_i X_{i,H_1}'\right\|=||Z^TX_{H_1}'||\leq 1.01||\hat{\Omega}_1||^{1/2}||Z^TU_{H_1}||,$$
where $||Z^TU_{H_1}||$ is upper bounded by
$$\mathbb{P}\Bigg(||Z^TU_{H_1}||>1.01\Big((t+1)\sqrt{n}+\sqrt{|H_1|n}\Big)\Bigg)\leq e^{-3n/64}+e^{-t^2/2},$$
by Proposition D.2 in \cite{ma2013sparse}. The similar analysis also applies to $\left\|\sum_{i=1}^n Z_i Y_{i,H_2}'\right\|$. For the third inequality, we have
\begin{eqnarray*}
&& \mathbb{P}\Bigg(\left\|\sum_{i=1}^n Z_iX_{i,L_1}'\right\|_{\infty}> 1.03||\hat{\Omega}_1||^{1/2}(t+2)\sqrt{n}\Bigg) \\
&\leq& \sum_{k\in L_1}\mathbb{P}\Bigg(\left|\sum_{i=1}^n Z_iX_{ik}'\right|> 1.03||\hat{\Omega}_1||^{1/2}(t+2)\sqrt{n}\Bigg) \\
&\leq& \sum_{k\in L_1}\mathbb{P}\Bigg(\left|\sum_{i=1}^n Z_i\frac{X_{ik}'}{\sqrt{\text{Var}(X_{ik}')}}\right|>1.01(t+2)\sqrt{n}\Bigg) \\
&\leq& |L_1|\Big(e^{-3n/64}+e^{-t^2/2}\Big),
\end{eqnarray*}
where we have used Proposition D.2 in \cite{ma2013sparse} again. Similarly, we obtain the last inequality. The proof is complete.
\end{proof}

\begin{proof}[Proof of Lemma \ref{lem:concen4}]
Using the latent representation in Lemma \ref{lem:latent}, we have 
\begin{eqnarray*}
\hat{A}_{H_{1}H_{2}} &=&A_{H_{1}H_{2}}+A_{H_{1}H_{2}}\Bigg(\frac{1}{n}%
\sum_{i=1}^{n}Z_{i}^{2}-1\Bigg)+\sqrt{\lambda }\alpha _{H_{1}}\Bigg(\frac{1}{%
n}\sum_{i=1}^{n}Z_{i}Y_{i,H_{2}}^{\prime }{}^{T}\Bigg) \\
&&+\sqrt{\lambda }\Bigg(\frac{1}{n}\sum_{i=1}^{n}Z_{i}X_{i,H_{1}}^{\prime
}{}^{T}\Bigg)\beta _{H_{2}}^{T}+\frac{1}{n}\sum_{i=1}^{n}X_{i,H_{1}}^{\prime
}Y_{i,H_{2}}^{\prime }{}^{T}.
\end{eqnarray*}
Therefore 
\begin{eqnarray}
||\hat{A}_{H_{1}H_{2}}-A_{H_{1}H_{2}}|| &\leq &||A_{H_{1}H_{2}}||\left\vert 
\frac{1}{n}\sum_{i=1}^{n}Z_{i}^{2}-1\right\vert +||\alpha ||\left\Vert \frac{%
1}{n}\sum_{i=1}^{n}Z_{i}Y_{i,H_{2}}^{\prime }{}^{T}\right\Vert   \notag \\
&&+||\beta ||\left\Vert \frac{1}{n}\sum_{i=1}^{n}Z_{i}X_{i,H_{1}}^{\prime
}{}^{T}\right\Vert +\left\Vert \frac{1}{n}\sum_{i=1}^{n}X_{i,H_{1}}^{\prime
}Y_{i,H_{2}}^{\prime }{}^{T}\right\Vert .  \label{equ 1 in proof lemmaA6}
\end{eqnarray}
Now we control the upper bounds of four terms above. By picking $t=s^{1/2}%
\Bigg(\frac{\log p}{n}\Bigg)^{1/2-q/4}$, Lemma \ref{lem:concen1} implies 
\begin{equation*}
\left\vert \frac{1}{n}\sum_{i=1}^{n}Z_{i}^{2}-1\right\vert \leq s^{1/2}\Bigg(%
\frac{\log p}{n}\Bigg)^{1/2-q/4},
\end{equation*}
with probability at least $1-O(p^{-2})$. Moreover, Lemma \ref{lem:concen3}
implies that 
\begin{eqnarray*}
\left\Vert \frac{1}{n}\sum_{i=1}^{n}Z_{i}Y_{i,H_{2}}^{\prime
}{}^{T}\right\Vert  &\leq &C||\hat{\Omega}_{2}||^{1/2}\left( s^{1/2}\Big(%
\frac{\log p}{n}\Big)^{1/2-q/4}+\frac{\sqrt{|H_{2}|}}{\sqrt{n}}\right)  \\
&\leq &C||\hat{\Omega}_{2}||^{1/2}\left( s^{1/2}\Big(\frac{\log p}{n}\Big)%
^{1/2-q/4}+||\beta -\eta ||\right) 
\end{eqnarray*}
with probability at least $1-O(p^{-2})$ and 
\begin{eqnarray*}
\left\Vert \frac{1}{n}\sum_{i=1}^{n}Z_{i}X_{i,H_{1}}^{\prime
}{}^{T}\right\Vert  &\leq &C||\hat{\Omega}_{1}||^{1/2}\left( s^{1/2}\Big(%
\frac{\log p}{n}\Big)^{1/2-q/4}+\frac{\sqrt{|H_{1}|}}{\sqrt{n}}\right)  \\
&\leq &C||\hat{\Omega}_{1}||^{1/2}\left( s^{1/2}\Big(\frac{\log p}{n}\Big)%
^{1/2-q/4}+||\alpha -\theta ||\right) 
\end{eqnarray*}
with probability at least $1-O(p^{-2})$, where we also used Lemma \ref%
{lem:cardinality} to control $|H_{1}|$ and $|H_{2}|$. Similarly, Lemma \ref%
{lem:cardinality} and Lemma \ref{lem:concen2} imply 
\begin{equation*}
\left\Vert \frac{1}{n}\sum_{i=1}^{n}X_{i,H_{1}}^{\prime }Y_{i,H_{2}}^{\prime
}{}^{T}\right\Vert \leq C||\hat{\Omega}_{1}||^{1/2}||\hat{\Omega}%
_{2}||^{1/2}\left( s^{1/2}\Big(\frac{\log p}{n}\Big)^{1/2-q/4}+||\alpha
-\theta ||+||\beta -\eta ||\right) ,
\end{equation*}
with probability at least $1-O(p^{-2})$. Besides, Assumption B guarantees
that $||A_{H_{1}H_{2}}||$, $||\hat{\Omega}_{1}||^{1/2}$, $||\hat{\Omega}%
_{2}||^{1/2}$, $||\alpha ||$ and $||\beta ||$ are bounded above by some
constant. Equation (\ref{equ 1 in proof lemmaA6}), together with above
bounds, completes our proof for $||\hat{A}_{H_{1}H_{2}}-A_{H_{1}H_{2}}||$.
The bound for $\max_{i=1,2}|\hat{l}_{i}-l_{i}|$ directly follows from Wely's
theorem,
\begin{equation*}
\max_{i=1,2}|\hat{l}_{i}-l_{i}|\leq ||\hat{A}_{H_{1}H_{2}}-A_{H_{1}H_{2}}||
\mbox{\rm .}
\end{equation*}
For the last result, it's clear that $A_{H_{1}H_{2}}=\lambda \alpha
_{H_{1}}\beta _{H_{2}}$ is of rank one with $l_{2}=0$ and $l_{1}=\lambda
\left\Vert \alpha _{H_{1}}\right\Vert \left\Vert \beta _{H_{2}}\right\Vert
\geq \lambda \left( \left\Vert \alpha \right\Vert -\left\Vert \alpha
_{L_{1}}\right\Vert \right) \left( \left\Vert \beta \right\Vert -\left\Vert
\beta _{L_{2}}\right\Vert \right) $. Assumption B implies that $\left\Vert
\alpha \right\Vert =(1+o(1))\left\Vert \theta \right\Vert \geq 0.9W^{-1/2}$, 
$\left\Vert \beta \right\Vert \geq 0.9W^{-1/2}$ and $\lambda \geq C_{\lambda
}$.  To finish our proof that $l_{1}$ is bounded below away from zero $%
l_{1}\geq C^{-1}$, we only need to show that $\left\Vert \alpha
_{L_{1}}\right\Vert =o(1)$ and $\left\Vert \beta _{L_{2}}\right\Vert =o(1)$.
This can be seen from our previous results Equation (\ref{e_B Bound}) $\max
\left\{ \left\Vert \alpha _{\left( B_{1}^{-}\right) ^{c}}\right\Vert
,\left\Vert \beta _{\left( B_{2}^{-}\right) ^{c}}\right\Vert \right\} =o(1)$
and Equation\ (\ref{Range of B}) $L_{i}\subset \left( B_{i}^{-}\right) ^{c}$. 
\end{proof}

\section{Proof of Theorem \protect\ref{thm:lower}}

The main tool for our proof is the Fano's Lemma, which is based on multiple
hypotheses testing argument. To introduce Fano's Lemma, we first need to
introduce a few notations. For two probability measures $\mathbb{P}$ and $%
\mathbb{Q}$ with density $p$ and $q$ with respect to a common dominating
measure $\mu $, write the Kullback-Leibler divergence as $K(\mathbb{P},%
\mathbb{Q})=\int p\log {\frac{p}{q}}d\mu $. The following lemma, which can
be viewed as a version of Fano's Lemma, gives a lower bound for the minimax
risk over the parameter set $\Omega =\left\{ \omega _{0},\omega _{1},\ldots
,\omega _{m_{\ast }}\right\} $ with the loss function $d\left( \cdot ,\cdot
\right) $. See \cite{Tsybakov2009introduction}, Section 2.6 for more
detailed discussions.

\begin{lemma}[Fano]
\label{fanno.lem} Let $\Omega =\{\omega _{k}:\;k=0,...,m_{\ast }\}$ be a
parameter set, where $d$ is a distance over $\Omega $. Let $\{\mathbb{P}%
_{\omega }:\omega \in \Omega \}$ be a collection of probability
distributions satisfying 
\begin{equation}
\frac{1}{m_{\ast }}\sum_{1\leq k\leq m_{\ast }}K\left( \mathbb{P}_{\omega
_{k}},\mathbb{P}_{\omega _{0}}\right) \leq c\log m_{\ast }
\label{KL condition for Fano}
\end{equation}%
with $0<c<1/8$. Let $\hat{\omega}$ be any estimator based on an observation
from a distribution in $\left\{ \mathbb{P}_{\theta },\theta \in \Theta
\right\} $. Then%
\begin{equation*}
\sup_{\omega \in \Omega }\mathbb{E}d^{2}\left( \hat{\omega},\omega \right)
\geq \min_{i\neq j}\frac{d^{2}\left( \omega _{i},\omega _{j}\right) }{4}%
\frac{\sqrt{m_{\ast }}}{1+\sqrt{m_{\ast }}}\left( 1-2c-\sqrt{\frac{2c}{\log
m_{\ast }}}\right) .
\end{equation*}
\end{lemma}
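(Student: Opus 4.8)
The plan is to reduce the estimation problem implicit in the statement to a multiple hypothesis test and then lower bound the minimax risk of that test by an information‑theoretic (Fano) argument. The estimation‑to‑testing reduction is routine; the substance is the sharp lower bound on the testing error, which is Fano's inequality in the stated quantitative form.

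\textbf{Reduction to a test.} Given any estimator $\hat\omega$, let $\psi=\psi(\hat\omega)$ be an index in $\{0,1,\dots,m_*\}$ minimizing $k\mapsto d(\hat\omega,\omega_k)$, and set $2s=\min_{i\ne j}d(\omega_i,\omega_j)$. On $\{\psi\ne k\}$ the triangle inequality gives $2s\le d(\omega_k,\omega_\psi)\le d(\hat\omega,\omega_k)+d(\hat\omega,\omega_\psi)\le 2\,d(\hat\omega,\omega_k)$, so $\{\psi\ne k\}\subseteq\{d(\hat\omega,\omega_k)\ge s\}$. Markov's inequality then yields
\[
\sup_{\omega\in\Omega}\mathbb{E}_\omega\, d^2(\hat\omega,\omega)\ \ge\ s^2\,\sup_{0\le k\le m_*}\mathbb{P}_{\omega_k}\big(d(\hat\omega,\omega_k)\ge s\big)\ \ge\ \frac{\min_{i\ne j}d^2(\omega_i,\omega_j)}{4}\,\sup_{0\le k\le m_*}\mathbb{P}_{\omega_k}(\psi\ne k),
\]
so it remains to show $\sup_{0\le k\le m_*}\mathbb{P}_{\omega_k}(\psi\ne k)\ge \frac{\sqrt{m_*}}{1+\sqrt{m_*}}\big(1-2c-\sqrt{2c/\log m_*}\big)$ for any test $\psi$.

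\textbf{Fano bound for the test.} Place the uniform prior on $\{0,\dots,m_*\}$, let $W$ be the random index and $X$ the observation with $X\mid\{W=k\}\sim\mathbb{P}_{\omega_k}$. By the data‑processing inequality $I(W;\psi(X))\le I(W;X)$, and for the mixture $\bar{\mathbb{P}}=\frac{1}{m_*+1}\sum_k\mathbb{P}_{\omega_k}$ one has $I(W;X)=\frac{1}{m_*+1}\sum_{k}K(\mathbb{P}_{\omega_k},\bar{\mathbb{P}})=\frac{1}{m_*+1}\sum_{k}K(\mathbb{P}_{\omega_k},\mathbb{P}_{\omega_0})-K(\bar{\mathbb{P}},\mathbb{P}_{\omega_0})\le \frac{1}{m_*+1}\sum_{k=1}^{m_*}K(\mathbb{P}_{\omega_k},\mathbb{P}_{\omega_0})$, which is $\le c\log m_*$ by the hypothesis (\ref{KL condition for Fano}). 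Together with the classical Fano inequality $H(W\mid\psi(X))\le \log 2+\bar p\,\log m_*$, where $\bar p=\frac{1}{m_*+1}\sum_k\mathbb{P}_{\omega_k}(\psi\ne k)$, and the identity $H(W\mid\psi(X))=\log(m_*+1)-I(W;\psi(X))$, this already forces $\bar p$ (hence $\sup_k\mathbb{P}_{\omega_k}(\psi\ne k)$) to be bounded below by a constant that stays positive whenever $c<1/8$. To sharpen the constant to the exact form in the statement I would run the refined version of this argument (Tsybakov, \emph{Introduction to Nonparametric Estimation}, Theorem 2.5): introduce an auxiliary threshold $\tau\in(0,1)$, split the analysis according to whether the posterior mass of the realized hypothesis exceeds $\tau$, bound each piece, and optimize; taking $\tau=m_*^{-1/2}$ produces the factor $\frac{\sqrt{m_*}}{1+\sqrt{m_*}}$ together with the correction $\sqrt{2c/\log m_*}$.

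\textbf{Main obstacle.} The delicate step is exactly this last quantitative bookkeeping: the plain Fano inequality already delivers a lower bound of the right order, but pinning down the precise expression $\frac{\sqrt{m_*}}{1+\sqrt{m_*}}(1-2c-\sqrt{2c/\log m_*})$ needs the sharpened argument with the free parameter $\tau$. Everything else — the reduction and the data‑processing and mixture‑KL estimates — is standard, and one may alternatively just invoke \cite{Tsybakov2009introduction}, Theorem 2.5, for the testing bound and combine it with the reduction above.
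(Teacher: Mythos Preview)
Your proposal is correct and aligns with the paper's treatment: the paper does not prove this lemma but simply states it and refers to \cite{Tsybakov2009introduction}, Section~2.6, which is exactly the source you invoke for the sharp quantitative form. Your sketch of the reduction to testing and the information-theoretic bound is the standard route to Tsybakov's Theorem~2.5, so there is nothing to add.
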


To apply the Fano's Lemma, we need to find a collection of least favorable
parameters $\Omega =\left\{ \omega _{0},\omega _{1},\ldots ,\omega _{m_{\ast
}}\right\} $ such that the difficulty of estimation among this subclass is
almost the same as that among the whole sparsity class $\mathcal{F}%
_{q}^{p_{1},p_{2}}\left( s_{1},s_{2},C_{\lambda }\right) .$ To be specific,
we check that the distance $d^{2}\left( \omega _{i},\omega _{j}\right) $
among this collection of least favorable parameters is lower bounded by the
sharp rate of the convergence and the average Kullback-Leibler divergence is
indeed bounded above by the logarithm cardinality of the collection, i.e.
Equation (\ref{KL condition for Fano}). In the proof we will show this via
three main steps. Before that, we need two auxiliary lemmas.

\begin{lemma}
\label{V-G bound} Let $\left\{ 0,1\right\} ^{p_{1}-1}$ be equipped with
Hamming distance $\delta $. For integer $0<d<\frac{p_{1}-1}{4},$ there
exists some subset $\Phi =\left\{ \phi _{1},\ldots ,\phi _{m}\right\}
\subset \left\{ 0,1\right\} ^{p_{1}-1}$ such that%
\begin{eqnarray}
\delta \left( \phi _{i},\phi _{j}\right) &\geq &\frac{d}{2}\quad 
\mbox{\rm  for
any }\quad \phi _{i}\neq \phi _{j},  \label{separate property} \\
\delta \left( \phi _{i},\vec{0}\right) &=&d\quad \mbox{\rm  for any }\quad
\phi _{i},  \label{sparseness property} \\
\log m &\geq &C_{0}d\log \left( \frac{p_{1}}{d}\right) 
\mbox{\rm  for some
constant }C_{0}.  \label{cardinality property}
\end{eqnarray}
\end{lemma}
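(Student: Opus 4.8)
The plan is to reduce the problem to the existence of a good $q$-ary code and then apply a greedy Gilbert--Varshamov volume bound. First I would partition $\{1,\dots,p_1-1\}$ into $d$ disjoint blocks $B_1,\dots,B_d$, each of size $b:=\lfloor (p_1-1)/d\rfloor\ge 4$ (discarding at most $d-1$ leftover coordinates), and restrict attention to those $\phi\in\{0,1\}^{p_1-1}$ supported on exactly one coordinate inside each block. Every such $\phi$ has weight exactly $d$, so (\ref{sparseness property}) is automatic, and these vectors correspond bijectively to block-choice vectors $x\in[b]^d$; moreover for two of them $\delta(\phi,\phi')=2\,\delta_H(x,x')$, where $\delta_H$ is the ordinary Hamming distance on $[b]^d$. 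Hence it suffices to find $m$ points of $[b]^d$ with pairwise $\delta_H$-distance at least $d/4$ and $\log m\ge C_0\,d\log(p_1/d)$.

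I would construct such a set greedily: add points of $[b]^d$ one at a time, each at $\delta_H$-distance $\ge d/4$ from all previously chosen ones, until none can be added. Maximality forces the radius-$(d/4)$ Hamming balls about the chosen points to cover $[b]^d$, so
$$m\ \ge\ \frac{b^d}{V},\qquad V=\sum_{i=0}^{\lceil d/4\rceil-1}\binom{d}{i}(b-1)^{\,i}\ \le\ \frac{d}{4}\,\binom{d}{\lfloor d/4\rfloor}\,(b-1)^{\lfloor d/4\rfloor}.$$
Using the entropy bound $\binom{d}{\lfloor d/4\rfloor}\le e^{H(1/4)\,d}$ with $H(1/4)=\tfrac14\log 4+\tfrac34\log\tfrac43<0.563$, together with $b\ge\tfrac34\,(p_1-1)/d$, this gives
$$\log m\ \ge\ d\log b-\tfrac{d}{4}\log(b-1)-0.563\,d-\log d\ \ge\ \tfrac34\,d\log(p_1/d)-C_1\,d-\log d$$
for an absolute constant $C_1$, and since $\log(p_1/d)>\log 4$ under the hypothesis $d<(p_1-1)/4$, the right-hand side exceeds $C_0\,d\log(p_1/d)$ once $p_1/d$ is at least some absolute threshold $K$. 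Property (\ref{separate property}) holds by construction, and (\ref{cardinality property}) is the bound just derived.

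The delicate point --- and where I expect to spend real effort --- is the regime $4<p_1/d<K$, in which $d\log(p_1/d)$ is only of order $d$, so the crude constant $C_1 d$ and the loss in $\log b\ge\log(p_1/d)-\log\tfrac43$ can swamp the leading term and no positive $C_0$ survives without care. I would treat this range separately, pitting the sharper Stirling estimate $\log\binom{p_1-1}{d}\ge\log\binom{4d}{d}=(4\log 4-3\log 3)\,d+O(\log d)$ (note $4\log 4-3\log 3\approx 2.25$) against a matching entropy bound on the ball volume $V$, so that their difference stays bounded below by a fixed positive multiple of $d$, which is itself a fixed positive multiple of $d\log(p_1/d)$ throughout this bounded-ratio range; the finitely many small values $d\le d_0$ are then checked directly (for instance $d=1$ gives the $p_1-1$ weight-one vectors, all at mutual distance $2$, and $d=2$ gives all $\binom{p_1-1}{2}$ weight-two vectors, since distinct weight-two vectors are automatically at distance $\ge 2\ge d/2$). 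Everything else is routine binomial and Stirling estimation.
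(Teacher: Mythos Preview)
The paper does not actually prove this lemma; it simply cites \cite{massart2007concentration}, Lemma~4.10. Your proposal therefore supplies more than the paper does, and your block-encoding reduction to a $q$-ary Gilbert--Varshamov bound is a correct and clean way to do it: the bijection with $[b]^d$, the identity $\delta(\phi,\phi')=2\,\delta_H(x,x')$, and the greedy covering argument are all sound.

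One simplification: your separate treatment of the bounded-ratio regime $4<p_1/d<K$ is not needed. The block argument already yields
\[
\log m \;\ge\; d\Bigl[\log b-\tfrac14\log(b-1)-H(\tfrac14)\Bigr]-O(\log d),
\]
and the bracket is bounded below by its value at $b=4$, namely $\log 4-\tfrac14\log 3-H(\tfrac14)\approx 0.55>0$. Hence $\log m\ge c\,d-O(\log d)$ uniformly in $b\ge 4$, and since $d\log(p_1/d)\le d\log K$ throughout the bounded-ratio range this gives (\ref{cardinality property}) with $C_0=c/\log K$ as soon as $d$ exceeds an absolute $d_0$ (to absorb the $O(\log d)$). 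So the only genuinely separate cases are the finitely many small values $d\le d_0$, which you already dispatch by hand. Your alternative plan via $\log\binom{p_1-1}{d}$ would also work, but it is extra effort you can avoid.
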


See \cite{massart2007concentration}, Lemma 4.10 for more details.

\begin{lemma}
\label{KL property} For $i=1,2$, let $\theta _{i}$ and $\eta _{i}$ be some
unit vectors and $\mathbb{P}_{i}$ be the distribution of $n$ i.i.d. $N\left(
0,\Sigma _{i}\right) $, where the covariance matrix is defined as 
\begin{equation*}
\Sigma _{i}=%
\begin{pmatrix}
I_{p_{1}\times p_{1}} & \lambda \theta _{i}\eta _{i}^{T} \\ 
\lambda \eta _{i}\theta _{i}^{T} & I_{p_{2}\times p_{2}}%
\end{pmatrix}%
.
\end{equation*}%
Then we have%
\begin{equation*}
K\left( \mathbb{P}_{1},\mathbb{P}_{2}\right) \leq \frac{n\lambda ^{2}}{%
2\left( 1-\lambda ^{2}\right) }\left( \left\Vert \theta _{1}-\theta
_{2}\right\Vert ^{2}+\left\Vert \eta _{1}-\eta _{2}\right\Vert ^{2}\right) .
\end{equation*}
\end{lemma}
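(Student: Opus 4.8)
The plan is to compute $K(\mathbb{P}_1,\mathbb{P}_2)$ in closed form and then bound the answer by the squared Euclidean distances. Since $\mathbb{P}_i$ is the law of $n$ i.i.d.\ draws from $N(0,\Sigma_i)$, I would first use tensorization, $K(\mathbb{P}_1,\mathbb{P}_2)=n\,K\bigl(N(0,\Sigma_1),N(0,\Sigma_2)\bigr)$, and invoke the standard formula for the Kullback--Leibler divergence between two centered Gaussians,
\begin{equation*}
K\bigl(N(0,\Sigma_1),N(0,\Sigma_2)\bigr)=\tfrac12\Bigl(\mathrm{tr}(\Sigma_2^{-1}\Sigma_1)-(p_1+p_2)+\log\tfrac{\det\Sigma_2}{\det\Sigma_1}\Bigr).
\end{equation*}
Everything then reduces to understanding the two covariance matrices, which I would handle by writing $\Sigma_i=I+\lambda M_i$, where $M_i$ is the symmetric matrix whose off-diagonal blocks are $\theta_i\eta_i^T$ and $\eta_i\theta_i^T$.

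The key point I would exploit is the trivial spectral structure of $M_i$: since $\|\theta_i\|=\|\eta_i\|=1$, the block vectors $(\theta_i,0)$ and $(0,\eta_i)$ in $\mathbb{R}^{p_1+p_2}$ form an orthonormal pair on whose span $M_i$ acts as the coordinate swap, while $M_i$ is zero on the orthogonal complement. Hence $M_i$ has eigenvalues $1,-1,0,\dots,0$, so $M_i^3=M_i$, and $M_i^2$ is the orthogonal projection onto that two-dimensional span (block-diagonal with blocks $\theta_i\theta_i^T$ and $\eta_i\eta_i^T$). This yields three things at once. First, $\Sigma_i$ has eigenvalues $1+\lambda$, $1-\lambda$, and $1$ with multiplicity $p_1+p_2-2$; in particular $\Sigma_i\succ 0$ since $|\lambda|<1$, and $\det\Sigma_1=\det\Sigma_2=1-\lambda^2$, so the $\log\det$ term vanishes. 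Second, from $M_i^3=M_i$ one checks by direct multiplication that $\Sigma_i^{-1}=I-\tfrac{\lambda}{1-\lambda^2}M_i+\tfrac{\lambda^2}{1-\lambda^2}M_i^2$. Third, substituting this into $\mathrm{tr}(\Sigma_2^{-1}\Sigma_1)=\mathrm{tr}\bigl(\Sigma_2^{-1}(I+\lambda M_1)\bigr)$ and expanding, every term is either $\mathrm{tr}(I)$, the trace of a block-off-diagonal matrix (hence $0$), or the trace of a block-off-diagonal product such as $M_2^2M_1$ (hence $0$); the surviving contributions are $\mathrm{tr}(M_2^2)=\|\theta_2\|^2+\|\eta_2\|^2=2$ and $\mathrm{tr}(M_2M_1)=2(\theta_1^T\theta_2)(\eta_1^T\eta_2)$, which gives
\begin{equation*}
K(\mathbb{P}_1,\mathbb{P}_2)=\frac{n\lambda^2}{1-\lambda^2}\bigl(1-(\theta_1^T\theta_2)(\eta_1^T\eta_2)\bigr).
\end{equation*}

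Finally, I would bound $1-(\theta_1^T\theta_2)(\eta_1^T\eta_2)$ using the unit-norm identities $\theta_1^T\theta_2=1-\tfrac12\|\theta_1-\theta_2\|^2$ and $\eta_1^T\eta_2=1-\tfrac12\|\eta_1-\eta_2\|^2$. Writing $u=\tfrac12\|\theta_1-\theta_2\|^2\ge 0$ and $v=\tfrac12\|\eta_1-\eta_2\|^2\ge 0$, one has $1-(1-u)(1-v)=u+v-uv\le u+v=\tfrac12(\|\theta_1-\theta_2\|^2+\|\eta_1-\eta_2\|^2)$, and combining with the previous display gives the claimed inequality. I do not expect a genuine obstacle here: the whole argument is elementary linear algebra. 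The only place needing mild care is the trace bookkeeping in the third point above — keeping track of which block products are traceless — and the clean way to organize that is exactly the identity $M_i^3=M_i$, which simultaneously drives the closed form of $\Sigma_i^{-1}$.
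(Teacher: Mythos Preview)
Your proof is correct and follows essentially the same route as the paper: both use the Gaussian KL formula, observe that $\Sigma_1$ and $\Sigma_2$ share the spectrum $\{1+\lambda,1-\lambda,1,\dots,1\}$ so the $\log\det$ term vanishes, compute $\Sigma_2^{-1}$ explicitly, arrive at the exact identity $K(\mathbb{P}_1,\mathbb{P}_2)=\tfrac{n\lambda^2}{1-\lambda^2}\bigl(1-(\theta_1^T\theta_2)(\eta_1^T\eta_2)\bigr)$, and finish with the same unit-vector bound. The only cosmetic difference is that the paper obtains $\Sigma_2^{-1}$ from the eigendecomposition via the vectors $(\theta_2,\pm\eta_2)$, whereas you package the same rank-two structure through $M_i^3=M_i$ to write $\Sigma_i^{-1}$ as a quadratic polynomial in $M_i$.
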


We shall divide the proof into three main steps.

\noindent \textbf{Step 1: Constructing the parameter set. }Without loss of
generality we assume $C_{\lambda }<\frac{1}{2}$. The subclass of parameters
we will pick can be described in the following form:%
\begin{equation*}
\Sigma _{\left( p_{1}+p_{2}\right) \times \left( p_{1}+p_{2}\right) }=%
\begin{pmatrix}
I_{p_{1}\times p_{1}} & \frac{1}{2}\theta \eta ^{T} \\ 
\frac{1}{2}\eta \theta ^{T} & I_{p_{2}\times p_{2}}%
\end{pmatrix}%
,
\end{equation*}%
where we will pick a collection of $\theta $ or $\eta $ such that they are
separated with the right rate of convergence. Without loss of generality, we
assume that $s_{1}\geq s_{2}$ and hence $s=s_{1}$. If the inequality $%
s_{1}\geq s_{2}$ holds in the other direction, we only need to switch the
roles of $\theta $ and $\eta $. In this case, we will pick the collection of
least favorable parameters indexed by the canonical pair $\omega =\left(
\theta ,\eta \right) $. Specifically, define $\Omega =\left\{ \omega
_{0},\omega _{1},\ldots ,\omega _{m_{\ast }}\right\} $ where $\omega
_{i}=\left( \theta _{i},e_{1}\right) $ and $e_{1}$ is the unit vector in $%
\mathbb{R}^{p_{2}}$ with the first coordinate $1$ and all others $0.$ The
number $m_{\ast }$ will be determined by the a version of Varshamov-Gilbert
bound in Lemma \ref{V-G bound}.

Now we define $m_{\ast }=m-1$ and each $\theta _{i-1}=\left( \left(
1-\epsilon ^{2}\right) ^{1/2},\phi _{i}\epsilon d^{-1/2}\right) $, where we
pick 
\begin{equation*}
\epsilon =c_{1}\left( s_{1}-1\right) ^{1/2}\left( \frac{\log p_{1}}{n}%
\right) ^{\frac{1}{2}-\frac{q}{4}}\mbox{\rm and }d=\left( s_{1}-1\right)
\left( \frac{\log p_{1}}{n}\right) ^{-\frac{q}{2}}\mbox{\rm ,}
\end{equation*}%
while $\phi _{i}$ and $m$ are determined in Lemma \ref{V-G bound}
accordingly. The constant $c_{1}\in \left( 0,1\right) $\ is to be determined
later. It's easy to check that each $\theta _{i-1}$ is a unit vector. By our
sparsity assumption $1-\epsilon ^{2}\geq 0.5$ by picking a sufficient small
constant $c_{1}$ and consequently the first coordinate is the largest one in
magnitude. Clearly $\left\vert \theta _{i,(1)}\right\vert ^{q}\leq s_{1}$.
Moreover, we have%
\begin{eqnarray*}
\left\vert \theta _{i,(k)}\right\vert ^{q} &=&\epsilon
^{q}d^{-q/2}=c_{1}^{q}\left( \frac{\log p_{1}}{n}\right) ^{\frac{q}{2}}\leq
s_{1}k^{-1},\mbox{\rm  }2\leq k\leq d+1, \\
\left\vert \theta _{i,(k)}\right\vert ^{q} &=&0\leq s_{1}k^{-1},\mbox{\rm  }%
k>d+1.
\end{eqnarray*}%
Hence each $\theta _{i}$ is in the corresponding weak $l_{q}$ ball.
Therefore our parameter subclass $\Omega \subset \mathcal{F}%
_{q}^{p_{1},p_{2}}\left( s_{1},s_{2},C_{\lambda }\right) $.

\noindent\textbf{Step 2: Bounding }$d^{2}\left( \omega _{i},\omega
_{j}\right) $. The loss function we considered in this section for $\omega
_{i}$ and $\omega _{j}$ can be simplified as 
\begin{equation*}
d^{2}\left( \omega _{i},\omega _{j}\right) =L^{2}(\theta _{i},\theta
_{j})=\left\Vert \theta _{i}\theta _{i}^{T}-\theta _{j}\theta
_{j}^{T}\right\Vert _{F}^{2}\geq \left\Vert \theta _{j}-\theta
_{i}\right\Vert ^{2},
\end{equation*}%
whenever $\left\Vert \theta _{j}-\theta _{i}\right\Vert ^{2}\leq 2$ which is
satisfied in our setting since $\left\Vert \theta _{j}-\theta
_{i}\right\Vert ^{2}\leq 2\epsilon ^{2}\leq 1$. The Equation (\ref{separate
property}) in Lemma \ref{V-G bound} implies that%
\begin{eqnarray}
\min_{i\neq j}d^{2}\left( \omega _{i},\omega _{j}\right) &\geq &\frac{d}{2}%
\left( \epsilon d^{-1/2}\right) ^{2}  \notag \\
&\geq &\frac{\epsilon ^{2}}{2}=\frac{c_{1}^{2}}{2}\left( s_{1}-1\right)
\left( \frac{\log p_{1}}{n}\right) ^{1-\frac{q}{2}}.  \label{bounding d^2}
\end{eqnarray}%
which is the sharp rate of convergence, noting that the Equation (\ref%
{bounding d^2}) is still true up to a constant when we replace $p_{1}$ by $p$
and $s_{1}$ by $s$.

\noindent\textbf{Step 3: Bounding the Kullback-Leibler divergence.}

Note that in our case $\eta _{1}=\eta _{2}=e_{1}.$ Lemma \ref{KL property},
together with Equations (\ref{sparseness property}) and (\ref{cardinality
property}), imply%
\begin{eqnarray*}
\frac{1}{m_{\ast }}\sum_{1\leq k\leq m_{\ast }}K\left( \mathbb{P}_{\omega
_{k}},\mathbb{P}_{\omega _{0}}\right) &\leq &\frac{n\lambda ^{2}}{2\left(
1-\lambda ^{2}\right) }\frac{d}{2}\left( \epsilon d^{-1/2}\right) ^{2} \\
&=&\frac{n\lambda ^{2}}{4\left( 1-\lambda ^{2}\right) }c_{1}^{2}\left(
s_{1}-1\right) \left( \frac{\log p_{1}}{n}\right) ^{1-\frac{q}{2}} \\
&=&\frac{n}{12}c_{1}^{2}d\frac{\log p_{1}}{n}<\frac{1}{10}\log m,
\end{eqnarray*}%
where the last inequality is followed by picking a sufficiently small
constant $c_{1}>0$ and noting that $\lambda =\frac{1}{2}$, $s_{i}\left( 
\frac{n}{\log p_{i}}\right) ^{q/2}=o(p_{i})$ in the assumption. Therefore we
could apply Lemma \ref{fanno.lem} and Equation (\ref{bounding d^2}) to
obtain the sharp rate of convergence, which completes our proof.%
\begin{equation*}
\sup_{P\in \mathcal{F}}\mathbb{E}_{P}\left( L^{2}(\hat{\theta},\theta )\vee
L^{2}(\hat{\eta},\eta )\right) \geq Cs\left( \frac{\log p}{n}\right) ^{1-%
\frac{q}{2}},
\end{equation*}%
for any estimator $(\hat{\theta},\hat{\eta})$, where $\mathcal{F}=\mathcal{F}%
_{q}^{p_{1},p_{2}}\left( s_{1},s_{2},C_{\lambda }\right) $. Hence, Theorem %
\ref{thm:lower} is proved.

We finally prove Lemma \ref{KL property} to complete the whole proof. 
\begin{proof}[Proof of Lemma \ref{KL property}]
Let's rewrite matrix $\Sigma _{i}$ in the following way%
\begin{equation*}
\Sigma _{i}=I+\frac{\lambda }{2}\left(
\begin{array}{c}
\theta _{i} \\
\eta _{i}%
\end{array}%
\right) \left(
\begin{array}{c}
\theta _{i} \\
\eta _{i}%
\end{array}%
\right) ^{T}-\frac{\lambda }{2}\left(
\begin{array}{c}
\theta _{i} \\
-\eta _{i}%
\end{array}%
\right) \left(
\begin{array}{c}
\theta _{i} \\
-\eta _{i}%
\end{array}%
\right) ^{T}.
\end{equation*}%
Note that $\left\Vert \left( \theta _{i}^{T},\eta _{i}^{T}\right)
\right\Vert =\sqrt{2},$ so $\Sigma _{1}$ and $\Sigma _{2}$ have the same
eigenvalues $1+\lambda ,1,1,\ldots ,1-\lambda .$ Thus we have%
\begin{eqnarray}
K\left( \mathbb{P}_{1},\mathbb{P}_{2}\right) &=&\frac{n}{2}\left[
\mbox{\rm
tr}\left( \Sigma _{2}^{-1}\Sigma _{1}\right) -p-\log \det \left( \Sigma
_{2}^{-1}\Sigma _{1}\right) \right]  \notag \\
&=&\frac{n}{2}\left[ \mbox{\rm tr}\left( \Sigma _{2}^{-1}\Sigma _{1}\right)
-p\right] =\frac{n}{2}\mbox{\rm tr}\left( \Sigma _{2}^{-1}\left( \Sigma
_{1}-\Sigma _{2}\right) \right)  \notag \\
&=&\frac{n\lambda }{2}\mbox{\rm tr}\left( \Sigma _{2}^{-1}\left(
\begin{array}{cc}
0 & A_{1}-A_{2} \\
A_{1}^{T}-A_{2}^{T} & 0%
\end{array}%
\right) \right) ,  \label{KL matrix}
\end{eqnarray}%
where $p=p_{1}+p_{2}$ and $A_{i}=\theta _{i}\eta _{i}^{T}.$ To explicitly
write down the inverse of $\Sigma _{2}$, we use its eigen-decomposition%
\begin{equation*}
\Sigma _{2}=I-\left(
\begin{array}{cc}
\theta _{2}\theta _{2}^{T} & 0 \\
0 & \eta _{2}\eta _{2}^{T}%
\end{array}%
\right) +\frac{1+\lambda }{2}\left(
\begin{array}{c}
\theta _{2} \\
\eta _{2}%
\end{array}%
\right) \left(
\begin{array}{c}
\theta _{2} \\
\eta _{2}%
\end{array}%
\right) ^{T}+\frac{1-\lambda }{2}\left(
\begin{array}{c}
\theta _{2} \\
-\eta _{2}%
\end{array}%
\right) \left(
\begin{array}{c}
\theta _{2} \\
-\eta _{2}%
\end{array}%
\right) ^{T}.
\end{equation*}%
Thus we have%
\begin{eqnarray*}
\Sigma _{2}^{-1} &=&I-\left(
\begin{array}{cc}
\theta _{2}\theta _{2}^{T} & 0 \\
0 & \eta _{2}\eta _{2}^{T}%
\end{array}%
\right) +\frac{1}{2\left( 1+\lambda \right) }\left(
\begin{array}{c}
\theta _{2} \\
\eta _{2}%
\end{array}%
\right) \left(
\begin{array}{c}
\theta _{2} \\
\eta _{2}%
\end{array}%
\right) ^{T}+\frac{1}{2\left( 1-\lambda \right) }\left(
\begin{array}{c}
\theta _{2} \\
-\eta _{2}%
\end{array}%
\right) \left(
\begin{array}{c}
\theta _{2} \\
-\eta _{2}%
\end{array}%
\right) ^{T}, \\
&=&I-\frac{\lambda }{2\left( 1+\lambda \right) }\left(
\begin{array}{c}
\theta _{2} \\
\eta _{2}%
\end{array}%
\right) \left(
\begin{array}{c}
\theta _{2} \\
\eta _{2}%
\end{array}%
\right) ^{T}+\frac{\lambda }{2\left( 1-\lambda \right) }\left(
\begin{array}{c}
\theta _{2} \\
-\eta _{2}%
\end{array}%
\right) \left(
\begin{array}{c}
\theta _{2} \\
-\eta _{2}%
\end{array}%
\right) ^{T}
\end{eqnarray*}%
Plugging this representation into Equation (\ref{KL matrix}), we obtain that%
\begin{eqnarray*}
K\left( \mathbb{P}_{1},\mathbb{P}_{2}\right) &=&\frac{n\lambda ^{2}}{2}\left[
\mbox{\rm tr}\left( \frac{A_{2}\left( A_{2}^{T}-A_{1}^{T}\right) }{1-\lambda
^{2}}\right) +\mbox{\rm tr}\left( \frac{A_{2}^{T}\left( A_{2}-A_{1}\right) }{%
1-\lambda ^{2}}\right) \right] \\
&=&\frac{n\lambda ^{2}}{1-\lambda ^{2}}\mbox{\rm tr}\left( A_{2}\left(
A_{2}^{T}-A_{1}^{T}\right) \right) \\
&=&\frac{n\lambda ^{2}}{1-\lambda ^{2}}\left( 1-\left( \theta _{1}^{T}\theta
_{2}\right) \left( \eta _{1}^{T}\eta _{2}\right) \right) \\
&=&\frac{n\lambda ^{2}}{1-\lambda ^{2}}\left( 1-\left( 1-\frac{\left\Vert
\theta _{1}-\theta _{2}\right\Vert ^{2}}{2}\right) \left( 1-\frac{\left\Vert
\eta _{1}-\eta _{2}\right\Vert ^{2}}{2}\right) \right) \\
&\leq &\frac{n\lambda ^{2}}{2\left( 1-\lambda ^{2}\right) }\left( \left\Vert
\theta _{1}-\theta _{2}\right\Vert ^{2}+\left\Vert \eta _{1}-\eta
_{2}\right\Vert ^{2}\right) ,
\end{eqnarray*}%
where we used that $\theta _{i}$ and $\eta _{i}$ are unit vectors in the
fourth equation above.
\end{proof}

\bibliographystyle{Chicago}
\bibliography{CCA}

\end{document}